\newtheorem{theorem}{Theorem}
\newtheorem{definition}{Definition}
\newtheorem{proposition}[theorem]{Proposition}
\newtheorem{propri}[theorem]{Property}
\newtheorem{lemma}[theorem]{Lemma}
\newtheorem{prop}[theorem]{Proposition}
\def\arc{\mathrm{a}}
\def\Rect{\mathrm{Rect}}
\def\A{\mathbf{A}}
\def\eps{\varepsilon}
\def\II{\mbox{\rm \large 1\hskip -0.353em 1}}
\newcommand{\EE}{\mathop{\hbox{\rm I\kern-0.17em E}}\nolimits}
\newcommand{\PP}{\mathop{\hbox{\rm I\kern-0.17em P}}\nolimits}
\newcommand{\NN}{{\mathcal N}}
\newcommand{\TT}{{\mathcal T}}
\newcommand{\FF}{{\mathcal F}}
\newcommand{\CC}{{\mathcal C}}
\newcommand{\g}{{\gamma}}
\newcommand{\N}{{\mathbb N}}
\newcommand{\Z}{{\mathbb Z}}
\newcommand{\R}{{\mathbb R}}
\newcommand{\ind}{{\bf 1}}
\def\i{\mathbf{i}}
\def\etal{\textit{et al.} }
\title{Sublinearity of the number of semi-infinite branches for geometric random trees}
\author{David Coupier}
\date{\today}
\begin{document}
\maketitle

\begin{abstract}
The present paper addresses the following question: for a geometric random tree in $\R^{2}$, how many semi-infinite branches cross the circle $\CC_{r}$ centered at the origin and with a large radius $r$? We develop a method ensuring that the expectation of the number $\chi_{r}$ of these semi-infinite branches is $o(r)$. The result follows from the fact that, far from the origin, the distribution of the tree is close to that of an appropriate directed forest which lacks bi-infinite paths. In order to illustrate its robustness, the method is applied to three different models: the Radial Poisson Tree (RPT), the Euclidean First-Passage Percolation (FPP) Tree and the Directed Last-Passage Percolation (LPP) Tree. Moreover, using a coalescence time estimate for the directed forest approximating the RPT, we show that for the RPT $\chi_{r}$ is $o(r^{1-\eta})$, for any $0<\eta<1/4$, almost surely and in expectation.
\end{abstract}

\noindent Keywords: Coalescence; Directed forest; Geodesic; Geometric random tree; Percolation; Semi-infinite and bi-infinite path; Stochastic geometry.\\

\noindent AMS Classification: 60D05

\section{Introduction}


The present paper focuses on geometric random trees embedded in $\R^{2}$ and on their semi-infinite paths. When each vertex of a given geometric random tree $\TT$ built on a countable vertex set has finite degree then $\TT$ automatically admits at least one semi-infinite path. Excepting this elementary result, describing the semi-infinite paths of $\TT$ (their number, their directions etc.) is nontrivial. An important step was taken by Howard and Newman in \cite{HN01}. They develop an efficient method (Proposition 2.8) ensuring that, under certain hypotheses, a geometric random tree $\TT$ satisfies the two following statements : with probability $1$, every semi-infinite path of $\TT$ has an asymptotic direction $[S1]$ and for every direction $\theta\in[0;2\pi)$, there is at least one semi-infinite path of $\TT$ with asymptotic direction $\theta$ $[S2]$. As a consequence, the number $\chi_{r}$ of semi-infinite paths of $\TT$ crossing the circle $\CC_{r}$ tends to infinity as $r\to\infty$. Thenceforth, a natural question already mentioned in the seminal 1965 article of Hammersley and Welsh \cite{HaWe} as the \textit{highways and byways problem} concerns the growth rate of $\chi_{r}$. To our knowledge, this problem has not been studied until now.

In this paper, we state bounds for this rate in three particular models: the Radial Poisson Tree (RPT), the Euclidean First-Passage Percolation (FPP) Tree and the Directed Last-Passage Percolation (LPP) Tree.\\

In the last 15 years, many geometric random trees satisfying $[S1]$ and $[S2]$ are appeared in the literature. Among these trees, two different classes can be distinguished. The first one is that of \textit{greedy} trees (as the RPT). The graph structure of a greedy tree results from local rules. Its paths are obtained through greedy algorithms, e.g. each vertex is linked to the closest point inside a given set. On the contrary, \textit{optimized} trees are built from global rules as first or last-passage percolation procedure. This is the case of the Euclidean FPP Tree and the Directed LPP Tree. Besides, for these trees, one refers to geodesics instead of branches or paths.

Our first main result says that the mean number of semi-infinite paths is asymptotically sublinear, i.e.
\begin{equation}
\label{sublinChir}
\EE \chi_{r} = o(r) ~,
\end{equation}
for many examples of greedy and optimized trees. This result means that among all the edges crossing the circle $\CC_{r}$, whose mean number is of order $r$, a very few number of them belong to semi-infinite paths.
 
Let us first give some examples of greedy trees studied in the literature. From now on, $\NN$ denotes a homogeneous Poisson Point Process (PPP) in $\R^{2}$ with intensity $1$. The \textit{Radial Spanning Tree} (RST) has been introduced by Baccelli and Bordenave in \cite{BB} to modelize communication networks. This tree, rooted at the origin $O$ and whose vertex set is $\NN\cup\{O\}$, is defined as follows: each vertex $X\in\NN$ is linked to its closest vertex among $(\NN\cup\{O\})\cap D(O,|X|)$. A second example is given by Bonichon and Marckert in \cite{BM}. The authors study the \textit{Navigation Tree} in which each vertex is linked to the closest one in a given sector-- with angle $\theta$ --oriented towards the origin $O$ (see Section 1.2.2). This tree satisfies $[S1]$ and $[S2]$ whenever $\theta$ is not too large (Theorem 5). In Section \ref{sect:RPT} of this paper, a third example of greedy tree is introduced, called the \textit{Radial Poisson Tree} (RPT). For any vertex $X\in\NN$, let $\textrm{Cyl}(X,\rho)$ be the set of points of the disk $D(O,|X|)$ whose distance to the segment $[O;X]$ is smaller than a given parameter $\rho>0$. Then, in the RPT, $X$ is linked to the element of $\textrm{Cyl}(X,\rho)\cap(\NN\cup\{O\})$ having the largest Euclidean norm. The main motivation to study this model comes from the fact it is closely related to a directed forest studied by Ferrari and his coauthors in \cite{FLT,FFW}. In Theorem \ref{theo:RPTstraight} of Section \ref{sect:RPTstraight}, we prove that statements $[S1]$ and $[S2]$ hold for the RPT.

Our first example of optimized tree is called the \textit{Euclidean FPP Tree} and has been introduced by Howard and Newman in \cite{HN01}. In this tree, the geodesic joining each vertex $X$ of a PPP $\NN$ to the root $X^{O}$, which is the closest point of $\NN$ to the origin $O$, is defined as the path $X_{1}=X^{O},\ldots,X_{n}=X$ minimizing the weight $\sum_{i=1}^{n-1} |X_{i}-X_{i+1}|^{\alpha}$, where $\alpha>0$ is a given parameter. A second example is given by Pimentel in \cite{Pimentel}. First, the author associates i.i.d. nonnegative random variables to the edges of the Delaunay triangulation built from the PPP $\NN$. Thus, he links each vertex $Y$ of the triangulation to a selected root by a FPP procedure. Our third example of optimized tree is slightly different from the previous ones since its vertex set is the deterministic grid $\N^{2}$. The \textit{Directed LPP Tree} is obtained by a LPP procedure from i.i.d. random weights associated to the vertices of $\N^{2}$. Under suitable hypotheses, this tree satisfies $[S1]$ and $[S2]$ (see Georgiou \etal \cite{GeRaSe}).

The sublinear result (\ref{sublinChir}) has been already proved for the RST. But its proof is scattered in several works \cite{BB,BCT,CT} and its robust and universal character has not been sufficiently highlighted. We claim that our method allows to show that (\ref{sublinChir}) holds for all the trees mentioned above and we explicitly prove it for the RPT (Theorem \ref{theo:sublinRPT}), the Euclidean FPP Tree (Theorem \ref{theo:sublinEuclFPPtree}) and the Directed LPP Tree (Theorem \ref{theo:sublinLPPtree}).

When the law of the geometric random tree is isotropic, the limit (\ref{sublinChir}) follows from
\begin{equation}
\label{sublinDirectional}
\EE \chi_{r}(\theta,2\pi) = o(1) ~,
\end{equation}
where $\chi_{r}(\theta,2\pi)$ is the number of semi-infinite paths of the considered tree $\TT$ crossing the arc of $\CC_{r}$ centered at $r e^{\i\theta}$ and with length $2\pi$. The proof of (\ref{sublinDirectional}) is mainly based on two ingredients. First, we approximate the distribution of $\TT$, locally and around the point $re^{\i\theta}$, by a directed forest $\FF$ with direction $-e^{\i\theta}$. The proof of this approximation result and the definition of the approximating directed forest $\FF$ are completely different according to the nature (greedy or optimized) of $\TT$. Roughly speaking, in the greedy case, the approximating directed forest $\FF$ is obtained from $\TT$ by sending the target, i.e. the root of $\TT$, to infinity in the direction $-e^{\i\theta}$. In particular, the directed forest approximating the RPT is the collection of coalescing one-dimensional random walks with uniform jumps in a bounded interval with radius $\rho$ (see \cite{FLT}). When $\TT$ is optimized, the approximating directed forest is given by the collection of semi-infinite geodesics having the same direction $-e^{\i\theta}$ and starting at all the vertices. The existence of these semi-infinite geodesics is ensured by $[S2]$. Their uniqueness is stated in Proposition \ref{prop:directiondeterm}. The second ingredient is the absence of bi-infinite path in the approximating directed forest. Actually, this is the only part of our method where the dimension two is used, and even required for optimized trees. Let us also add that our method applies even if the limit shape of the considered model is unknown.

In Theorems \ref{theo:sublinRPT}, \ref{theo:sublinEuclFPPtree} and \ref{theo:sublinLPPtree}, it is also established that a.s. $\chi_{r}(\theta,2\pi)$ does not tend to $0$. This is due to the absence of double semi-infinite paths with the same deterministic direction.\\

Our second main result (also in Theorem \ref{theo:sublinRPT}) is a substantial improvement of (\ref{sublinChir}) and (\ref{sublinDirectional}) in the case of the RPT. We prove that as $r\to\infty$
\begin{equation}
\label{sublinChirImprov}
\EE \chi_{r}(\theta,r^{\eta}) = o(1) ~,
\end{equation}
for any $0<\eta<1/4$, and then by isotropy $\EE\chi_{r}$ is $o(r^{1-\eta})$. As for the proof of (\ref{sublinDirectional}), the one of (\ref{sublinChirImprov}) also uses the approximation result by a suitable directed forest $\FF$ but this times in a non local way (see Lemma \ref{lem:RPT-Claim2}). Indeed, unlike $\chi_{r}(\theta,2\pi)$, the arc involved in $\chi_{r}(\theta,r^{\eta})$ has a size growing with $r$. Moreover, accurate estimates on fluctuations of paths of $\FF$ and on the coalescence time of two given paths are needed. It is worth pointing out here that a deep link seems to exist between the rate at which $\chi_{r}$ tends to $0$ and the rate at which semi-infinite paths merge in the approximating directed forest.

Furthermore, we deduce from (\ref{sublinChirImprov}) an almost sure convergence: with probability $1$, the ratio $\chi_{r}/r^{1-\eta}$ tends to $0$ as $r\to\infty$ for any $0<\eta<1/4$. This result is based on the fact that two semi-infinite paths of the RPT, far from each other, are independent with high probability. This argument does not hold in the FPP/LPP context.

Let us finally remark that the convergences in mean obtained in this paper for the RPT (i.e. limits (\ref{sublinRPT}) of Theorem \ref{theo:sublinRPT}) do not require the statements $[S1]$ and $[S2]$. However, this is not the case of the almost sure results (i.e. limits (\ref{CVasRPT}) and (\ref{NoCVasRPT}) of Theorem \ref{theo:sublinRPT}) and this is the reason why we prove in Section \ref{sect:RPTstraight} that the RPT satisfies these two statements.\\

Our paper is organized as follows. In Section \ref{sect:ModelsResults}, the RPT, the Euclidean FPP Tree and the Directed LPP Tree are introduced, and the sublinearity results (Theorems \ref{theo:sublinRPT}, \ref{theo:sublinEuclFPPtree} and \ref{theo:sublinLPPtree}) are stated. The general scheme of the proof of (\ref{sublinDirectional}) is developped in Section \ref{sect:SketchSublin} but the reader must refer to Sections \ref{sect:proofEuclFPPtree} and \ref{sect:proofLPP} respectively for details about the Euclidean FPP Tree and the Directed LPP Tree. The last three sections concern the RPT. The proof of (\ref{sublinChirImprov}) is devoted to Section \ref{sect:proofRPT}. The almost sure convergence of $\chi_{r}/r^{1-\eta}$ to $0$ is given in Section \ref{sect:asRPT}. Finally, in Section \ref{sect:RPTstraight}, we prove that the RPT satisfies statements $[S1]$ and $[S2]$ (Theorem \ref{theo:RPTstraight}).

\section{Models and sublinearity results}
\label{sect:ModelsResults}

Let $O$ be the origin of $\R^{2}$ which is endowed with the Euclidean norm $|\cdot|$. We denote by $D(x,r)$ the open Euclidean disk with center $x$ and radius $r$, and by $\CC(x,r)$ the corresponding disk. We merely set $\CC_{r}$ instead of $\CC(O,r)$. Let $\arc_{r}(\theta,c)$ be the arc of $\CC_{r}$ centered at $re^{\i\theta}$ and with length $c>0$. The Euclidean scalar product is denoted by $\langle\cdot,\cdot\rangle$. Throughout the paper, the real plan $\R^{2}$ and the set of complex numbers $\mathbb{C}$ are identified. Hence, according to the context, a point $X$ will be described by its cartesian coordinates $(X(1),X(2))$ or its Euclidean norm $|X|$ and its argument $\mbox{arg}(X)$ which is the unique (when $X\not= O$) real number $\theta$ in $[0;2\pi)$ such that $X=|X|e^{\i\theta}$. We denote by $[X;Y]$ the segment joining $X,Y\in\R^{2}$ and by $(X;Y)$ the corresponding open segment.

All the trees and forests considered in the sequel are graphs with out-degree $1$ (except for their roots). Hence, they are naturally directed. The outgoing vertex $Y$ of any edge $(X,Y)$ will be denoted by $A(X)$ and called the \textit{ancestor} of $X$. We will also say that $X$ is a \textit{child} of $A(X)$. Moreover, it will be convenient to keep the same notation for these trees and forests that they are considered as random graphs with directed edges $(X,A(X))$ or as subsets of $\R^{2}$ made up of segments $[X;A(X)]$.

A sequence $(X_{n})_{n\in\N}$ of vertices defines a semi-infinite path (resp. a bi-infinite path) of a geometric random graph if for any $n\in\N$ (resp. $n\in\Z$), $A(X_{n+1})=X_{n}$. A semi-infinite path $(X_{n})_{n\in\N}$ admits $\theta\in[0;2\pi)$ as asymptotic direction if
$$
\lim_{n\to\infty} \frac{X_{n}}{|X_{n}|} = e^{\i\theta} ~.
$$

The number of semi-infinite paths at level $r$, i.e. crossing the circle $\CC_{r}$, will be denoted by $\chi_{r}$. This notion should be specified according to the context.

In the sequel, $\NN$ denotes a homogeneous PPP in $\R^{2}$ with intensity $1$. The number of Poisson points in a given measurable set $\Lambda$ is $\NN(\Lambda)$.

\subsection{The Radial Poisson Tree}
\label{sect:RPT}

Let $\rho>0$ be a positive real number. Considering $\NN\cap D(O,\rho)^{c}$ instead of $\NN$, we can assume for this section that $\NN$ has no point in $D(O,\rho)$. The \textit{Radial Poisson Tree} (RPT) $\TT_{\rho}$ is a directed graph whose vertex set is given by $\NN\cup\{O\}$. Let us define the ancestor $A(X)$ of any vertex $X\in\NN$ as follows. First we set
$$
\textrm{Cyl}(X,\rho) = \left( [O ; X] \oplus D(O,\rho) \right) \cap D(O,|X|) ~,
$$ 
where $\oplus$ denotes the Minkowski sum. If $\textrm{Cyl}(X,\rho)\cap\NN$ is empty then $A(X)=O$. Otherwise, $A(X)$ is the element of $\textrm{Cyl}(X,\rho)\cap\NN$ having the largest Euclidean norm:
\begin{equation}
\label{argmaxRPT}
A(X) = \arg\!\max \left\{ |Y| , \; Y \in \textrm{Cyl}(X,\rho)\cap\NN \right\} ~.
\end{equation}
Hence, the set $\textrm{Cyl}(X,\rho)^{\ast}=\textrm{Cyl}(X,\rho)\setminus D(O,|A(X)|)$ avoids the PPP $\NN$. See Figure \ref{fig:cylindre}. Let us note that the definition of ancestor $A(X)$ can be extended to any $X\in\R^{2}$.

\begin{figure}[!ht]
\begin{center}
\psfrag{a}{\small{$\CC_{\rho}$}}
\psfrag{b}{\small{$A(X)$}}
\psfrag{c}{\small{$X$}}
\psfrag{d}{\small{$\CC_{|X|}$}}
\includegraphics[width=7.5cm,height=5cm]{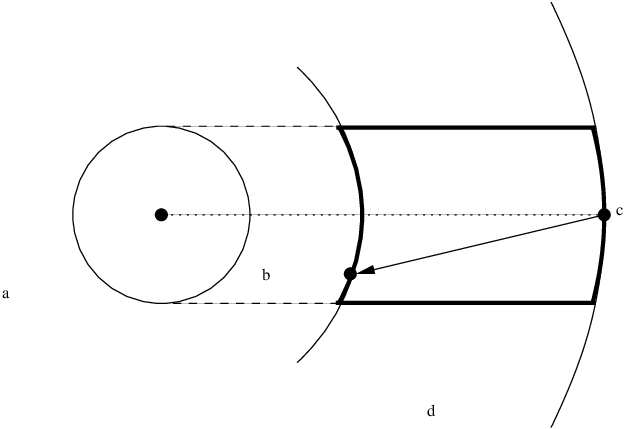}
\end{center}
\caption{\label{fig:cylindre} This picture represents the ancestor $A(X)\in\NN$ of $X=(x,0)$. The bold lines delimit the cylinder $\textrm{Cyl}(X,\rho)^{\ast}$. Remark that the distance between $A(X)$ and the segment $[O ;X]$ is smaller than $\rho$ in the RPT $\TT_{\rho}$ whereas it is unbounded in the Radial Spanning Tree (see \cite{BB}).}
\end{figure}

This construction ensures the a.s. uniqueness of the ancestor $A(X)$ of any $X\in\R^{2}$. This means that the RPT has no loop. Furthermore, $A(X)$ is closer than $X$ to the origin. Since the PPP $\NN$ is locally finite, then any $X\in\R^{2}$ is linked to the origin by a finite number of edges.

Here are some basic properties of the RPT $\TT_{\rho}$.

\begin{propri}
\label{propri:RPT}
The Radial Poisson Tree $\TT_{\rho}$ satisfies the following non-crossing property: for any vertices $X,X'\in\NN$ with $X\not= X'$, $(X ; A(X))\cap (X' ; A(X')) = \emptyset$. Moreover, the number of children of any $X\in\NN\cup\{O\}$ is a.s. finite but unbounded.
\end{propri}

\begin{proof}
Let $X,X'\in\NN$ with $X\not= X'$. By symmetry, we assume that a.s. $|X|>|X'|$. Actually, we can focus on the case where $|X'|>|A(X)|$. Otherwise, $|X|>|A(X)|\geq |X'|\geq\rho$ since the PPP $\NN$ avoids the disk $D(O,\rho)$. Henceforth the interval $(X ; A(X))$ is outside the disk $D(O,|X'|)$ whereas $(X' ; A(X'))$ is inside. They cannot overlap. Hence, let us assume that a.s. $\min\{|X|,|X'|\}>\max\{|A(X)|,|A(X')|\}$. If the ancestors $A(X)$ and $A(X')$ belong to $\textrm{Cyl}(X,\rho)\cap\textrm{Cyl}(X',\rho)$ then they necessarily are equal. Otherwise $A(X)$ belongs to $\textrm{Cyl}(X,\rho)\setminus\textrm{Cyl}(X',\rho)$ or $A(X')$ belongs to $\textrm{Cyl}(X',\rho)\setminus\textrm{Cyl}(X,\rho)$. In both cases, the sets $(X ; A(X))$ and $(X' ; A(X'))$ cannot overlap. [This is the reason of the hypothesis $\NN\cap D(O,\rho)=\emptyset$ which ensures that pathes of $\TT_{\rho}$ do not cross.]

About the second statement, we only treat the case of the origin $O$. These are the same arguments for any $X\in\NN$. Let $K$ be the number of children of $O$. By the Campbell's formula,
\begin{eqnarray*}
\EE K & = & \sum_{n\in\N} \; \EE \left\lbrack \sum_{{X\in\NN}\atop n\leq |X|<n+1} \ind_{A(X)=O} \right\rbrack \\
& = & \sum_{n\in\N} 2\pi \int_{n}^{n+1} \PP(A((x,0))=O) x \, dx \\
& \leq & \sum_{n\in\N} 2 \pi (n+1) \PP(A((n,0))=O) \\
& \leq & \sum_{n\in\N} 2 \pi (n+1) e^{-\rho(n-\rho)} < \infty ~.
\end{eqnarray*}
Then, the random variable $K$ is a.s. finite.

Let $R>\rho$ be a (large) real number. Consider a deterministic sequence of $k$ points $u_{1},\ldots,u_{k}$ on the circle $\CC_{R}$ such that $|u_{i}-u_{i+1}|=2\rho$ for $i=1,\ldots,k$. Such a sequence exists when
$$
k = \left\lfloor \frac{\pi}{\arcsin(\rho/R)} \right\rfloor ~,
$$
and in this case, $|u_{k}-u_{1}|\geq 2\rho$. Recall that $\lfloor\cdot\rfloor$ denotes the integer part. Let $0<\eps<\rho/2$. On the event ``each disk $D(u_i,\eps)$ contains exactly one point of $\NN$ and these are the only points of $\NN$ in the (large) disk $D(O,R+\eps)$'' which occurs with a positive probability, the number $K$ of children of $O$ is at least equal to $k$. Finally, the integer $k=k(R)$ tends to infinity with $R$. 
\end{proof}

\begin{figure}[!ht]
\begin{center}
\begin{tabular}{cc}
\includegraphics[width=7.5cm,height=7.5cm]{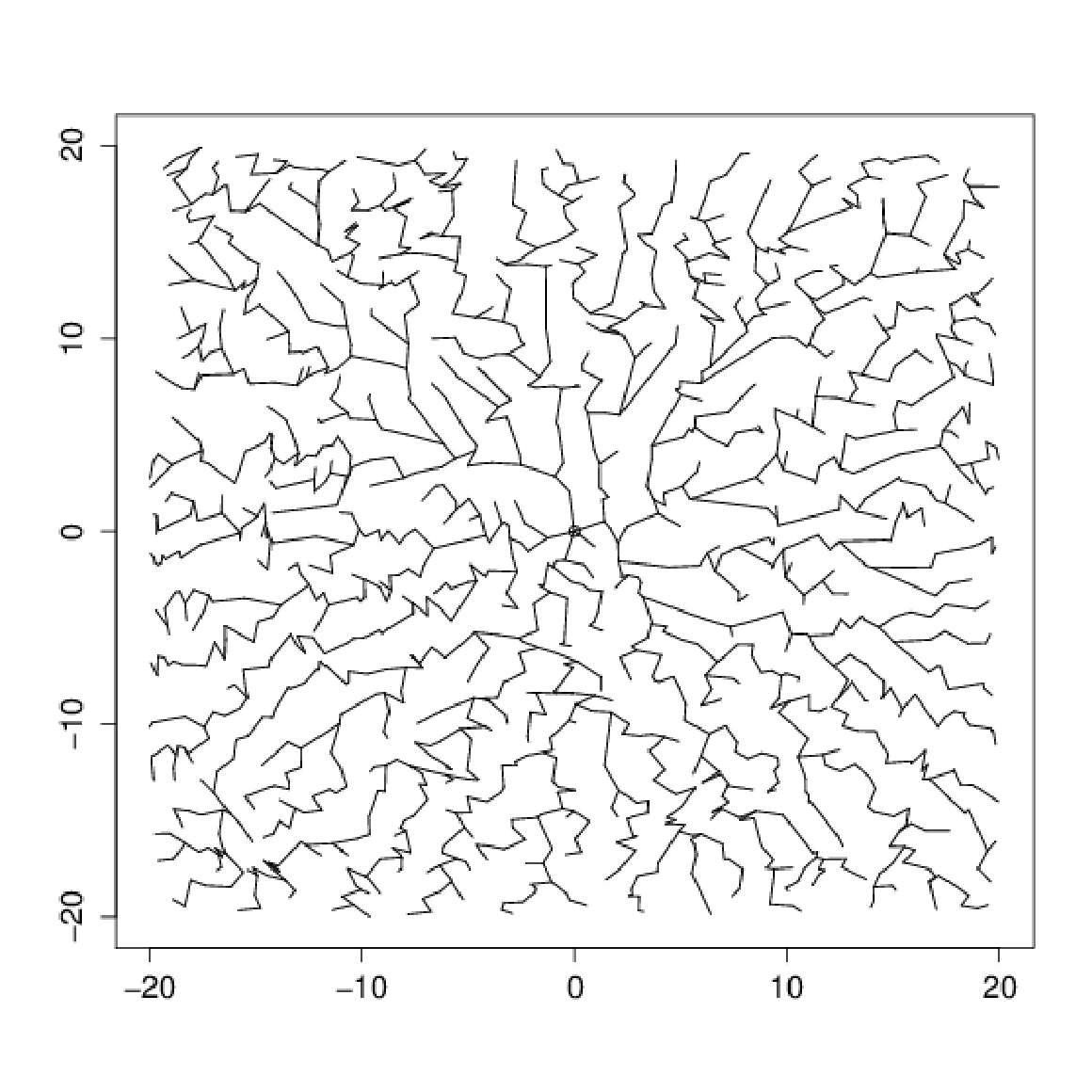} & \includegraphics[width=7.5cm,height=7.5cm]{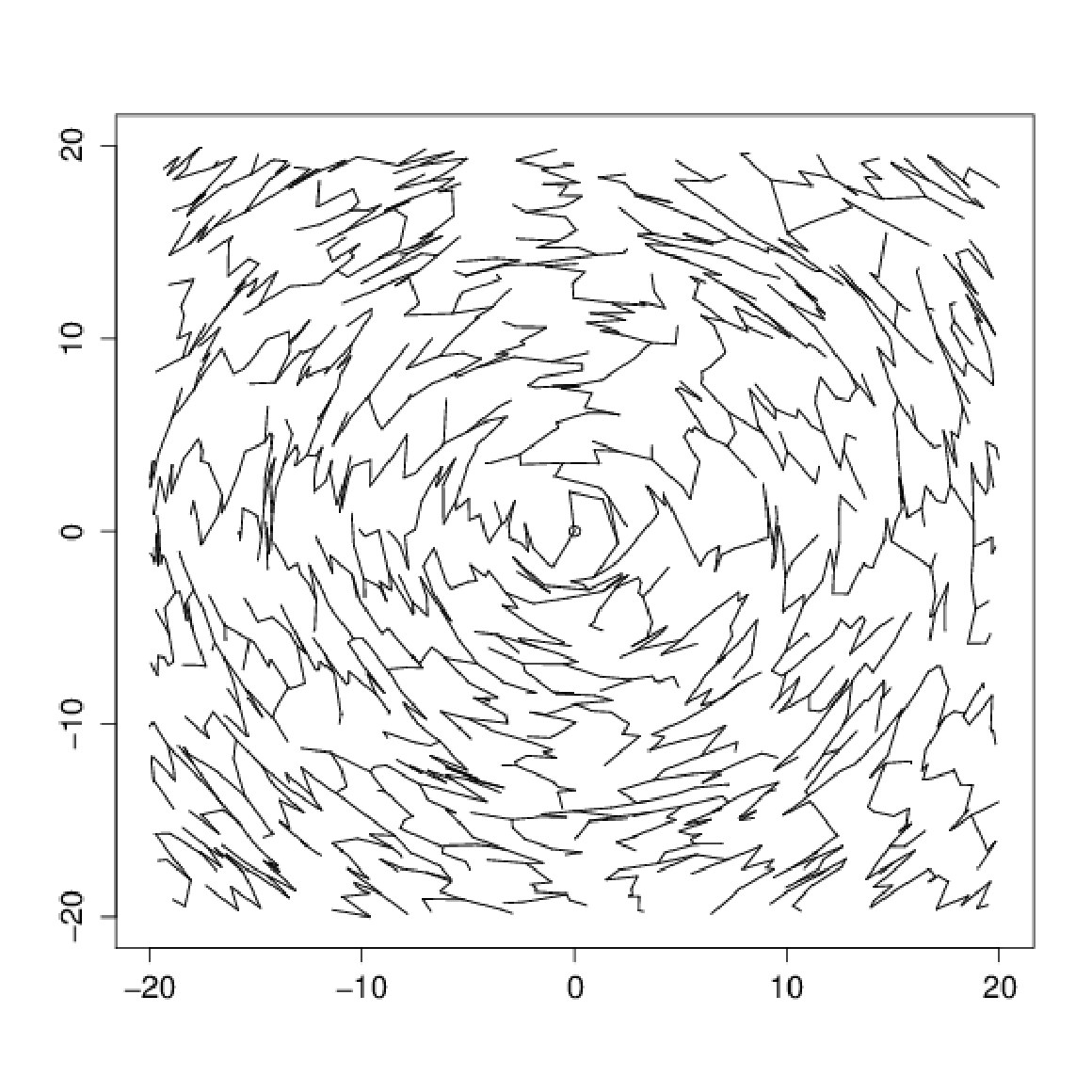}
\end{tabular}
\end{center}
\caption{Built on the same realization of the PPP $\NN$, the Radial Poisson Tree with $\rho=1$ (to the left) and $\rho=2$ (to the right).}
\end{figure}

Theorem \ref{theo:RPTstraight} of Section \ref{sect:RPTstraight} says that the RPT is \textit{straight}. Roughly speaking, this means that the subtrees of $\TT$ are becoming thinner when their roots are far away from the one of $\TT$. This notion has been introduced in Section 2.3 of \cite{HN01} to prove that any semi-infinite path has an asymptotic direction and in each direction there is a semi-infinite path (see Proposition 2.8 of \cite{HN01}). This is case of the RPT:

\begin{prop}
\label{prop:RPT-S12}
The Radial Poisson Tree $\TT_{\rho}$ a.s. satisfies statements $[S1]$ and $[S2]$.
\end{prop}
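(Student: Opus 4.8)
The plan is to deduce $[S1]$ and $[S2]$ from Proposition~2.8 of Howard and Newman~\cite{HN01}, a general statement which guarantees both properties for any geometric random tree satisfying a suitable \emph{straightness} condition. It will therefore be enough to check that the RPT $\TT_{\rho}$ fits the framework of \cite{HN01} and that it is straight.

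First I would record the elementary structural facts. By construction $\TT_{\rho}$ is a directed graph on $\NN\cup\{O\}$ with out-degree $1$ away from the root $O$; the a.s.\ uniqueness of the ancestor $A(X)$ forbids loops, and since $A(X)$ is strictly closer to $O$ than $X$ while $\NN$ is locally finite, every vertex is joined to $O$ by finitely many edges. Hence $\TT_{\rho}$ is a.s.\ a tree spanning $\NN\cup\{O\}$, and by Property~\ref{propri:RPT} each of its vertices has a.s.\ finite degree, so that $\TT_{\rho}$ does admit at least one semi-infinite branch. Moreover the law of $\TT_{\rho}$ is invariant under rotations about $O$, since both $\NN$ and the rule~(\ref{argmaxRPT}) are; and the edge length $|X-A(X)|$ has exponential tails, of the same type as the bound $\PP(A((n,0))=O)\le e^{-\rho(n-\rho)}$ obtained in the proof of Property~\ref{propri:RPT}. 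These are exactly the ingredients on the ``geometric'' side asked for in \cite{HN01}.

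The substantial hypothesis is that $\TT_{\rho}$ is straight in the sense of Section~2.3 of \cite{HN01}: roughly, that the subtrees of $\TT_{\rho}$ become thin far from $O$, i.e.\ that there exist $C,\delta>0$ such that, along the scales $r=2^{n}$ and outside an event whose probability summed over $n$ is finite, every branch of $\TT_{\rho}$ issued from a vertex $X$ with $|X|\ge r$ remains, between levels $r/2$ and $r$, within angular distance $C r^{-\delta}$ of the direction $\mathrm{arg}(X)$. This is precisely Theorem~\ref{theo:RPTstraight}, proved in Section~\ref{sect:RPTstraight}; granting it, Proposition~2.8 of \cite{HN01} applies and yields $[S1]$ and $[S2]$. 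In brief, $[S1]$ comes from a Borel--Cantelli argument along the scales $2^{n}$: straightness forces the direction of a semi-infinite branch to form a Cauchy sequence, hence to converge; and $[S2]$ comes from a compactness argument: given $\theta$, picking vertices $X_{k}$ with $\mathrm{arg}(X_{k})\to\theta$, the branches from $X_{k}$ to $O$ are confined by straightness to a shrinking cone around direction $\theta$, so a limit of these paths is a semi-infinite branch of $\TT_{\rho}$ rooted at $O$ with asymptotic direction $\theta$; combining this with $[S1]$ over a dense set of directions gives the full statement.

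The real work is thus entirely contained in the straightness estimate of Theorem~\ref{theo:RPTstraight}, which requires a quantitative control of the transversal fluctuations of the branches of $\TT_{\rho}$; there the non-crossing property of Property~\ref{propri:RPT} (planarity in dimension two) will be a convenient tool. The only point requiring care at the present level is to make sure that the hypotheses of Proposition~2.8 of \cite{HN01}, originally tailored to Euclidean first-passage percolation trees, are stated in a form that a Type~I tree such as the RPT genuinely satisfies --- a matter of matching definitions rather than a genuine obstacle, and one already taken into account in the way straightness is formulated in Section~\ref{sect:RPTstraight}.
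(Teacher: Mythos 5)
Your proposal is correct and follows exactly the route the paper takes: invoke the straightness of $\TT_{\rho}$ (Theorem~\ref{theo:RPTstraight}, proved in Section~\ref{sect:RPTstraight}) together with the checks that $\TT_{\rho}$ is an a.s.\ locally finite tree rooted at $O$, and conclude via Proposition~2.8 of Howard and Newman~\cite{HN01}. The extra intuition you supply about how straightness yields $[S1]$ by a Borel--Cantelli argument and $[S2]$ by compactness matches the content of that proposition and is consistent with the paper.
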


The random integer $\chi_{r}$ denotes the number of intersection points of the circle $\CC_{r}$ with the semi-infinite paths of the RPT. Proposition \ref{prop:RPT-S12} implies that $\chi_{r}$ a.s. converges to infinity as $r\to\infty$. Thus, consider two real numbers $\theta\in[0 ; 2\pi)$ and $c>0$. We denote by $\chi_{r}(\theta,c)$ the number of semi-infinite paths of the RPT crossing the arc $\arc_{r}(\theta,c)$ of $\CC_{r}$. Here is the sublinearity result satisfied by the RPT:

\begin{theorem}
\label{theo:sublinRPT}
Let $\theta\in[0 ; 2\pi)$ and $0<\eta<1/4$. Then,
\begin{equation}
\label{sublinRPT}
\lim_{r\to\infty} \EE \frac{\chi_{r}}{r^{1-\eta}} = 0 \; \mbox{ and } \; \lim_{r\to\infty} \EE \chi_{r}(\theta,r^{\eta}) = 0 ~.
\end{equation}
Furthermore,
\begin{equation}
\label{CVasRPT}
\lim_{r\to\infty} \frac{\chi_{r}}{r^{1-\eta}} = 0 \mbox{ a.s.}
\end{equation}
whereas, for any $c>0$, the sequence $(\chi_{r}(\theta,c))_{r>0}$ does not tend to $0$ a.s.:
\begin{equation}
\label{NoCVasRPT}
\PP \left( \limsup_{r\to\infty} \chi_{r}(\theta,c) \geq 1 \right) = 1 ~.
\end{equation}
\end{theorem}

Let us remark that the ratio $\chi_{r}/r$ should still tend to $0$ in $L^{1}$ and a.s. in any dimension $d\geq 3$. Indeed, the definition of the RPT and the proofs of Steps 1, 3 and 4 should be extended to any dimension without major changes. Moreover, the approximating directed forest still has no bi-infinite path in dimension $d\geq 3$-- even if it is a tree for $d=3$ and a collection of infinitely many trees for $d\geq 4$. See Theorem 3.1 of \cite{FLT} for the corresponding result.


\subsection{The Euclidean FPP Tree}
\label{sect:EuclFPPtree}

Let $\alpha>0$ be a positive real number. The \textit{Euclidean First-Passage Percolation Tree} $\TT_{\alpha}$ introduced and studied in \cite{HN97,HN01}, is a planar graph whose vertex set is given by the homogeneous PPP $\NN$. Unlike the RPT whose graph structure is local, that of the Euclidean FPP Tree is global and results from a minimizing procedure. Let $X,Y\in\NN$. A \textit{path} from $X$ to $Y$ is a finite sequence $(X_{1},\ldots,X_{n})$ of points of $\NN$ such that $X_{1}=X$ and $X_{n}=Y$. To this path, the weight
$$
\sum_{i=1}^{n-1} |X_{i} - X_{i+1}|^{\alpha}
$$
is associated. Then, a path minimizing this weight is called a \textit{geodesic} from $X$ to $Y$ and is denoted by $\g_{X,Y}$:
\begin{equation}
\label{geodesicFPP}
\g_{X,Y} = \arg\!\min \left\{ \sum_{i=1}^{n-1} |X_{i} - X_{i+1}|^{\alpha} \, , \; n\geq 2 \; \mbox{ and } \; (X_{1},\ldots,X_{n}) \; \mbox{is a path from $X$ to $Y$ } \right\} ~.
\end{equation}
By concavity of $x\mapsto x^{\alpha}$ for $0<\alpha\leq 1$, the geodesic from $X$ to $Y$ coincides with the straight line $[X;Y]$. Since a.s. no three points of $\NN$ are collinear, it is reduced to the trivial path $(X,Y)$. So, from now on, to get nontrivial geodesics, we assume $\alpha>1$.

Existence and uniqueness of the geodesic $\g_{X,Y}$ are a.s. ensured whenever $\alpha>1$. This is Proposition 1.1 of \cite{HN01}. Let $X^{O}$ be the closest Poisson point to the origin $O$. The Euclidean FPP Tree $\TT_{\alpha}$ is defined as the collection $\{\g_{X^{O},X}, X\in\NN\}$. By uniqueness of geodesics, $\TT_{\alpha}$ is a tree rooted at $X^{O}$.

Thanks to Proposition 1.2 of \cite{HN01}, any vertex $X$ of $\TT_{\alpha}$ a.s. has finite degree. Remark also that, unlike the RPT, the outgoing vertex of $X\not= X^{O}$ (i.e. its ancestor $A(X)$) may have a larger Euclidean norm than $X$.

The straight character of the Euclidean FPP Tree is stated in Theorem 2.6 of \cite{HN01} (for $\alpha>1$). It then follows:

\begin{prop}[Theorems 1.8 and 1.9 of \cite{HN01}]
\label{prop:EuclFPPtree-S12}
For any $\alpha>1$, the Euclidean FPP Tree $\TT_{\alpha}$ a.s. satisfies statements $[S1]$ and $[S2]$.
\end{prop}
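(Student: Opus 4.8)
The plan is to obtain $[S1]$ and $[S2]$ directly from the straight character of $\TT_{\alpha}$ (Theorem~2.6 of \cite{HN01}, recalled just above) by feeding it into the general machinery of Howard and Newman, Proposition~2.8 of \cite{HN01}. That machinery needs three inputs, all already at hand here: that a.s.\ every vertex of $\TT_{\alpha}$ has finite degree (Proposition~1.2 of \cite{HN01}), so that semi-infinite branches exist at all; that for $\alpha>1$ the geodesic between any two points of $\NN$ a.s.\ exists and is unique (Proposition~1.1 of \cite{HN01}); and straightness itself, which quantifies the statement that the subtree rooted at a vertex $X$ is, with overwhelming probability, confined to a cone about the ray $[O;X]$ whose half-angle $\psi(|X|)$ tends to $0$ as $|X|\to\infty$. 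Granting these, the conclusion follows; below I sketch how straightness converts into each statement.

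For $[S1]$: fix a semi-infinite branch $(X_{n})_{n\in\N}$ issued from the root $X^{O}$. For each $n$, the portion of the branch beyond $X_{n}$ lies in the subtree rooted at $X_{n}$, hence, on a straightness event of high probability, inside a cone of half-angle $\psi(|X_{n}|)$ about $X_{n}/|X_{n}|$. Summing the (small) failure probabilities along a deterministic sequence of radii and invoking Borel--Cantelli, one gets that a.s.\ this cone confinement holds for all large $n$, so the sequence of unit vectors $X_{n}/|X_{n}|$ is Cauchy; its limit is by definition the asymptotic direction of the branch.

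For $[S2]$: fix $\theta\in[0;2\pi)$ and choose points $Y_{k}\in\NN$ with $|Y_{k}|\to\infty$ and $Y_{k}/|Y_{k}|\to e^{\i\theta}$. Each geodesic $\g_{X^{O},Y_{k}}$ is a finite path from the root; since $X^{O}$ has finite degree and any fixed ball contains only finitely many points of $\NN$, a diagonal extraction yields a subsequence along which the $\g_{X^{O},Y_{k}}$ stabilize edge by edge, their common initial part growing to a semi-infinite branch $(Z_{n})_{n\in\N}$ of $\TT_{\alpha}$ from the root (uniqueness of geodesics guarantees consistency of the limit). For $k$ in this subsequence, $Y_{k}$ lies in the subtree rooted at the last vertex of $(Z_{n})$ through which $\g_{X^{O},Y_{k}}$ passes; straightness confines that subtree to a cone about the direction of that vertex whose opening tends to $0$, so $Y_{k}/|Y_{k}|\to e^{\i\theta}$ forces $Z_{n}/|Z_{n}|\to e^{\i\theta}$, i.e.\ the branch $(Z_{n})$ has asymptotic direction $\theta$.

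The genuine difficulty is entirely in the straightness input: establishing Theorem~2.6 of \cite{HN01} rests on the delicate fluctuation estimates of that paper, controlling both the transversal wandering of geodesics and their longitudinal weight $\sum_{i}|X_{i}-X_{i+1}|^{\alpha}$, and this is where the restriction $\alpha>1$ and the bulk of the probabilistic work enter. Once straightness is in hand, the passage to $[S1]$ and $[S2]$ is the soft compactness plus Borel--Cantelli argument outlined above, which is precisely the content of Proposition~2.8 of \cite{HN01}; so in practice the proof reduces to checking the hypotheses of that proposition and invoking it.
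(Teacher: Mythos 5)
Your proposal is correct and follows exactly the route the paper intends: the paper itself offers no new argument here but simply attributes the result to Theorems~1.8 and~1.9 of \cite{HN01}, which are obtained by feeding the straightness of $\TT_{\alpha}$ (Theorem~2.6 of \cite{HN01}), together with a.s.\ finite degree and a.s.\ existence/uniqueness of geodesics, into the general Proposition~2.8 of \cite{HN01}. Your compactness-plus-Borel--Cantelli sketch of how straightness yields $[S1]$ and $[S2]$ is precisely the content of that proposition, so the two approaches coincide.
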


The definition of the number $\chi_{r}$ of semi-infinite geodesics of the Euclidean FPP Tree $\TT_{\alpha}$ at level $r$ requires to be more precise than in Section \ref{sect:RPT}. Since the vertices of geodesics of $\TT_{\alpha}$ are not sorted w.r.t. their Euclidean norms, geodesics may cross many times any given circle. So, let us consider the graph obtained from $\TT_{\alpha}$ after deleting any geodesic $(X^{O},X_{2},\ldots,X_{n})$ with $n\geq 2$ (except the endpoint $X_{n}$) such that the vertices $X^{O},X_{2},\ldots,X_{n-1}$ belong to the disk $D(O,r)$ but $X_{n}$ is outside. Then, $\chi_{r}$ counts the unbounded connected components of this graph. Now, let $\theta\in[0 ; 2\pi)$ and $c>0$. The random integer $\chi_{r}(\theta,c)$ denotes the number of these unbounded connected components emanating from a vertex $X$ such that the edge $[A(X);X]$ crosses the arc $\arc_{r}(\theta,c)$ of the circle $\CC_{r}$. Here is the sublinearity result satisfied by the Euclidean FPP Tree:

\begin{theorem}
\label{theo:sublinEuclFPPtree}
Let $\theta\in[0 ; 2\pi)$ and $c>0$ be real numbers. Assume $\alpha\geq 2$. Then,
\begin{equation}
\label{sublinFPP}
\lim_{r\to\infty} \EE \frac{\chi_{r}}{r} = 0 \; \mbox{ and } \; \lim_{r\to\infty} \EE \chi_{r}(\theta,c) = 0 ~.
\end{equation}
Furthermore, the sequence $(\chi_{r}(\theta,c))_{r>0}$ does not tend to $0$ a.s.:
\begin{equation}
\label{NoCVasFPP}
\PP \left( \limsup_{r\to\infty} \chi_{r}(\theta,c) \geq 1 \right) = 1 ~.
\end{equation}
\end{theorem}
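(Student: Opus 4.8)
\textbf{Proof plan for Theorem \ref{theo:sublinEuclFPPtree}.}
The plan is to follow the four-step scheme announced in the introduction, specialized to the Euclidean FPP Tree with $\alpha\geq 2$. By isotropy of the PPP $\NN$, the first limit in (\ref{sublinFPP}) follows from the second one together with a standard edge-counting argument: the mean number of edges $[A(X);X]$ of $\TT_{\alpha}$ crossing $S(O,r)$ is of order $r$ (this is the remark at the end of Section \ref{sect:SketchSublin}), so $\EE\chi_{r}$ is obtained by summing $\EE\chi_{r}(\theta,c)$ over a covering of $S(O,r)$ by $O(r)$ arcs of fixed length $c$, and dividing by $r$ lets us exchange the limit with the (uniform) smallness of each term. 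Hence the core of the work is to prove $\lim_{r\to\infty}\EE\chi_{r}(\theta,c)=0$, and then the lower bound (\ref{NoCVasFPP}).

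For the upper bound I would first (Step 1) establish the local approximation: far from the root and near the point $re^{\i\theta}$, the tree $\TT_{\alpha}$ looks in distribution like the directed forest $\FF$ of semi-infinite geodesics all having the deterministic asymptotic direction $-e^{\i\theta}$, rooted at every vertex of $\NN$; existence of these geodesics comes from $[S2]$ (Proposition \ref{prop:EuclFPPtree-S12}) and their a.s. uniqueness from Proposition \ref{prop:directiondeterm}. The key reversibility fact $\g_{X,Y}=\g_{Y,X}$ is what makes this approximation work for Type II trees. Then (Step 2) I would show that in this directed forest $\FF$ \emph{every} branch is finite in the backward direction, i.e. $\FF$ has only one topological end a.s.; this is where planarity is genuinely used, via a crossing/coalescence argument showing two semi-infinite geodesics with the same direction must merge, so that $\FF$ is a single bi-infinite path decorated with finite trees, and only finitely many vertices in a bounded region have infinite backward subtree. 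Combining Steps 1 and 2 (Step 3), a semi-infinite geodesic of $\TT_{\alpha}$ crossing the small arc near $re^{\i\theta}$ corresponds, under the approximation, to a vertex of $\FF$ with an infinite backward subtree meeting a bounded window; since $\FF$ a.s. has no such vertex outside a finite set, $\chi_{r}(\theta,c)\to 0$ in probability. Finally (Step 4), to upgrade convergence in probability to $L^1$ convergence I would prove a moment bound $\sup_{r}\EE[\chi_{r}(\theta,c)^{1+\delta}]<\infty$ for some $\delta>0$ (or at least uniform integrability), using that each such semi-infinite geodesic must traverse the annulus between $S(O,r)$ and $S(O,r+1)$ within a controlled angular sector, together with the fluctuation estimates underlying the straightness (Theorem 2.6 of \cite{HN01}) to bound the number of candidate edges; the condition $\alpha\geq 2$ is presumably needed precisely here, to get strong enough moment control.

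For the complementary statement (\ref{NoCVasFPP}), I would argue that $\limsup_r\chi_{r}(\theta,c)\geq 1$ a.s.\ because $[S2]$ guarantees a semi-infinite geodesic with asymptotic direction exactly $\theta$; such a geodesic crosses the arc of $S(O,r)$ centered at $re^{\i\theta}$ of length $c$ for all $r$ large enough, so $\chi_{r}(\theta,c)\geq 1$ eventually. One must check that the arc, of fixed length $c$, is not too small: since the geodesic's direction converges to $e^{\i\theta}$, its crossing point with $S(O,r)$ is within $o(r)$... no, one needs that the angular deviation tends to $0$, which is exactly the content of having an asymptotic direction, so the crossing point lies within an arc of length $o(r)$, but we need length $c$ fixed; this is fine since $o(r)$ will eventually... actually the crossing point stays within an arc whose angular width tends to $0$, hence within linear width $o(r)$ — this is \emph{larger} than $c$, so one instead argues directionally: the point where the geodesic meets $S(O,r)$ has argument converging to $\theta$, so for $r$ large its distance to $re^{\i\theta}$ is $r|e^{\i\arg}-e^{\i\theta}|\to 0\cdot r$, which need not be $\leq c$. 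The correct route is to use the directional version of $[S2]$ more carefully, or to invoke Proposition \ref{prop:directiondeterm}-type uniqueness to locate the geodesic within a tube of sublinear width and then note that \emph{some} such geodesic meets the fixed arc; I would model this on the analogous argument already carried out for the RST in \cite{BCT,CT}.

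\textbf{Main obstacle.} The hardest part is Step 4, the uniform moment bound on $\chi_{r}(\theta,c)$: convergence in probability from Steps 1--3 is comparatively soft, but ruling out rare configurations with many semi-infinite geodesics crossing the small arc requires quantitative fluctuation control on geodesics of $\TT_{\alpha}$, and this is where the hypothesis $\alpha\geq 2$ enters. I would expect to borrow the relevant large-deviation / transversal-fluctuation estimates from \cite{HN01} and adapt the counting argument used for the RST.
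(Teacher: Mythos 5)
Your four-step skeleton for the upper bound matches the paper's, and you correctly identified the two ingredients specific to Type II: the reversibility $\g_{X,Y}=\g_{Y,X}$ needed for the local approximation, and Proposition~\ref{prop:directiondeterm} for uniqueness of the forest of coalescing semi-infinite geodesics. Two points diverge, one harmlessly and one not.

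On Step 4, you reach for transversal-fluctuation / large-deviation estimates from \cite{HN01} to bound the number of edges crossing the arc. The paper takes a much more elementary route: the crucial observation (from Lemma 5 of \cite{HN97}, valid precisely for $\alpha\geq 2$, which is where that hypothesis really enters both here and in Step 2) is that if $\{X,Y\}$ is a geodesic edge then the ball of diameter $[X;Y]$ contains no Poisson point. A long edge crossing the arc therefore forces a deterministic ball of radius $\sim R$ to be empty of $\NN$, whose probability decays like $e^{-\pi R^2/9}$; combined with a Poisson tail bound on $\NN(B((r,0),R))$ and $R=n^{1/4}$, this gives $\PP(\psi_r>n)$ decaying faster than any polynomial, uniformly in $r$. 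Your route is not a gap but it is much heavier than necessary and you would still have to turn it into a uniform-in-$r$ second-moment bound (or at least $(1+\delta)$-moment) before Cauchy--Schwarz applies.

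On (\ref{NoCVasFPP}) there is a genuine gap, which you half-diagnose yourself. The direct argument ``by $[S2]$ there is a semi-infinite geodesic with asymptotic direction exactly $\theta$, and it eventually hits the arc of fixed length $c$'' is false: an asymptotic direction only controls the \emph{angular} deviation, and the crossing point with $S(O,r)$ can stay at linear-in-$r$ distance from $re^{\i\theta}$ even while the angle converges. Your proposed repairs (``directional version of $[S2]$'', ``tube of sublinear width'') do not close this: a sublinear-width tube still need not intersect a fixed-length arc at any given radius. The paper's argument is entirely different and proceeds by contradiction: if with positive probability $\chi_r(\theta,c)=0$ for all $r\geq r_0$, then no semi-infinite geodesic crosses the ray $L(\theta,r_0)$. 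One then extracts, from the two unbounded subtrees on either side of $L(\theta,r_0)$, the two semi-infinite geodesics $\g,\g'$ closest to the ray; the region they enclose contains $L(\theta,r_0)$ and only finite branches, so by $[S1]$--$[S2]$ both $\g$ and $\g'$ must have asymptotic direction $\theta$. That contradicts Proposition~\ref{prop:directiondeterm}, which forbids two semi-infinite geodesics in the same deterministic direction. This is the RST argument from~\cite{BCT} that you gesture at, but you would need to actually run it rather than fall back on the direct ``one geodesic eventually hits the arc'' claim, which is simply wrong.
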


The hypothesis $\alpha\geq 2$ is added so that the Euclidean FPP Tree $\TT_{\alpha}$ satisfies the noncrossing property given in Lemma 5 of \cite{HN97}: for any vertices $X\not= Y$, the open segments $(X;A(X))$ and $(Y;A(Y))$ do not overlap. This property which also holds for the approximating directed forest $\FF_{\alpha}$, will be crucial to obtain the absence of bi-infinite geodesic in $\FF_{\alpha}$.

\subsection{The Directed LPP Tree}
\label{sect:LPPtree}

The \textit{Directed Last-Passage Percolation Tree} is quite different from the RPT or the Euclidean FPP Trees. Indeed, its vertex set is given by the deterministic grid $\N^{2}$. As a result, its random character comes from random times allocated to vertices of $\N^{2}$. See Martin \cite{Martin06} for a complete survey. A \textit{directed path} from the origin $O$ to a given vertex $z\in\N^{2}$ is a finite sequence of vertices $(z_{0},z_{1},\ldots,z_{n})$ with $z_{0}=O$, $z_{n}=z$ and $z_{i+1}-z_{i}=(1,0)\text{ or } (0,1)$, for $0\leq i\leq n-1$. The time to go from the origin to $z$ along the path $(z_{0},z_{1},\ldots,z_{n})$ is equal to the sum $\omega(z_{0})+\ldots+\omega(z_{n-1})$, where $\{\omega(z), z\in\N^{2}\}$ is a family of i.i.d. positive random variables such that
\begin{equation}
\label{LPP-Hypo1}
\mbox{$\EE\omega(z)^{2+\eps}<\infty$ for some $\eps>0$ and $\textrm{Var}(\omega(z))>0$}
\end{equation}
and
\begin{equation}
\label{LPP-Hypo2}
\mbox{$\PP(\omega(z)\geq r)$ is a continuous function of $r$}.
\end{equation}
A directed path maximizing this time over all directed paths from the origin to $z$ is denoted by $\g_{z}$ and called a \textit{geodesic} from the origin to $z$:
\begin{equation}
\label{geodesicLPP}
\g_{z} = \arg\!\max \left\{ \sum_{i=0}^{n-1} \omega(z_{i}) \, , \; (z_{0},\ldots,z_{n}) \; \mbox{is a directed path from $O$ to $z$ } \right\} ~.
\end{equation}
Hypothesis (\ref{LPP-Hypo2}) ensures the almost sure uniqueness of geodesics. Then, the collection of all these geodesics provides a random tree rooted at the origin and spanning all the quadrant $\N^{2}$. It is called the Directed Last-Passage Percolation Tree and is denoted by $\TT$. See Figure \ref{fig:LLPtree} for an illustration. Given $z\in\N^{2}\setminus\{O\}$, the \textit{ancestor} $A(z)$ of $z$ is the vertex among $z-(1,0)$ and $z-(0,1)$ by which its geodesic passes. The chidren of $z$ are the vertices among $z+(1,0)$ and $z+(0,1)$ whose $z$ is the ancestor.\\

\begin{figure}[!ht]
\begin{center}
\includegraphics[width=10cm,height=10cm]{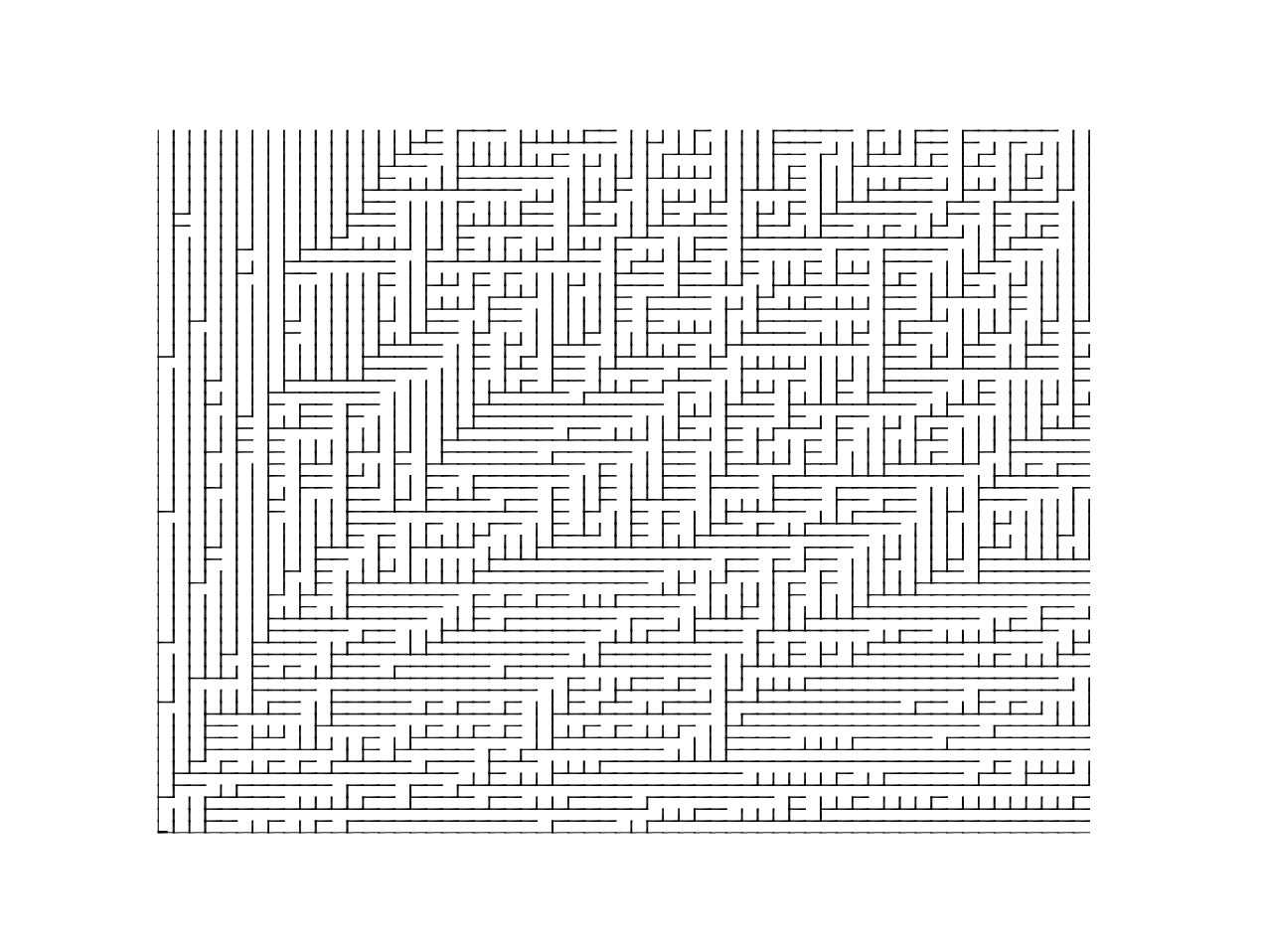}
\end{center}
\caption{\label{fig:LLPtree} Here is a realization of the Directed LPP Tree $\TT$ restricted to the set $[0;60]^{2}$ in the case of the exponential distribution with parameter $1$. This realization presents a remarkable feature; the geodesics to $(60,59)$ and to $(59,60)$ have only one common vertex which is the origin.}
\end{figure}

The study of geodesics of the Directed LPP Tree has started in \cite{FP} with the case of exponential weights and has been recently generalized in \cite{GeRaSe} to a larger class corresponding to (\ref{LPP-Hypo1}). However, a third hypothesis is required so that the Directed LPP Tree $\TT$ satisfies $[S1]$ and $[S2]$. If $G(z)$ denotes the times realized along the geodesic $\g_{z}$, then there exists a nonrandom continuous function $g:\R^{2}_{+}\to\R$ defined by
$$
\mbox{a.s. } g(x) = \lim_{n\to\infty} \frac{G(\lfloor nx\rfloor)}{n}
$$
and called the \textit{shape function}. See for instance Proposition 2.1 of \cite{Martin04}. The shape function $g$ is symmetric, concave and $1$-homogeneous. In the sequel, we also assume that
\begin{equation}
\label{LPP-Hypo3}
\mbox{$g$ is strictly concave.}
\end{equation}

\begin{prop}[Theorem 2.1 of \cite{GeRaSe}]
\label{prop:LPPtree-S12}
With hypotheses (\ref{LPP-Hypo1}), (\ref{LPP-Hypo2}) and (\ref{LPP-Hypo3}), the Directed LPP Tree $\TT$ a.s. satisfies statements $[S1]$ and $[S2]$.
\end{prop}

In the case of exponential weights, the LPP model is deeply linked to the Totally Asymmetric Simple Exclusion Process (TASEP). As a consequence, more precise results exist about semi-infinite geodesics of the Directed LPP Tree: see \cite{C3}.

Let $\theta\in[0 ; \pi/2]$ and $c>0$. The number of intersection points between the arc $\arc_{r}(\theta,c)$ of the circle $\CC_{r}$, and the semi-infinite geodesics of the Directed LPP Tree $\TT$ is denoted by $\chi_{r}(\theta,c)$. Due to its directed character, $\TT$ always contains two trivial semi-infinite geodesics which are the horizontal and vertical axes. This implies that, for any $c>0$, $\chi_{r}(0,c)$ and $\chi_{r}(\pi/2,c)$ are larger than $1$. This is the reason why the extreme values $\theta=0$ and $\theta=\pi/2$ are excluded from the first part of Theorem \ref{theo:sublinLPPtree}.

\begin{theorem}
\label{theo:sublinLPPtree}
Assume (\ref{LPP-Hypo1}), (\ref{LPP-Hypo2}) and (\ref{LPP-Hypo3}) hold. Let $\theta\in(0 ; \pi/2)$ and $c>0$ be real numbers. Then,
\begin{equation}
\label{sublinLPP}
\lim_{r\to\infty} \EE \chi_{r}(\theta,c) = 0 ~.
\end{equation}
Furthermore, for any $\theta\in[0 ; \pi/2]$ and $c>0$, the sequence $(\chi_{r}(\theta,c))_{r>0}$ does not tend to $0$ a.s.:
\begin{equation}
\label{NoCVasLPP}
\PP \left( \limsup_{r\to\infty} \chi_{r}(\theta,c) \geq 1 \right) = 1 ~.
\end{equation}
\end{theorem}

Because of the lack of isotropy of the Directed LPP Tree, we cannot immediatly deduce from (\ref{sublinLPP}) that $\EE \chi_{r}/r$ tends to $0$. In the case of exponential weights, a possible way to overcome this obstacle would be to take advantage of the coupling between the LPP model and the TASEP.

In order to avoid extra definitions, we do not mention explicitly in Theorem \ref{theo:sublinLPPtree} the following extension: (\ref{sublinLPP}) still holds without the strict concacivity of the shape function $g$, i.e. the restriction of $g$ to $\{(t,1-t), 0\leq t\leq 1\}$ may admit flat segments. In this case, thanks to Theorem 2.1 of \cite{GeRaSe}, geodesics are no longer directed according to a given direction but according to a semi-cone (generated by a flag segment). 

\section{Sketch of the proofs}
\label{sect:Sketch}

In this section, we use the generic notation $\TT$ to refer to the RPT $\TT_{\rho}$, to the Euclidean FPP Tree $\TT_{\alpha}$ and to the Directed LPP Tree $\TT$.

\subsection{Sublinearity results and comments}
\label{sect:SketchSublin}

First of all, the isotropic property allows to reduce the study to any given direction $\theta$:
$$
\EE \chi_{r} = r \EE \chi_{r}(\theta,2\pi) ~.
$$
This is the case of the RPT and the Euclidean FPP Tree, but not the Directed LPP Tree. So, our goal is to show that the expectation of $\chi_{r}(\theta,2\pi)$ tends to $0$ as $r$ tends to infinity. The scheme of the proof can be divided into four steps.\\

\textbf{STEP 1:} The first step consists in approximating locally (i.e. around the point $re^{\i \theta}$) and in distribution the tree $\TT$ by a suitable directed forest, say $\FF$, with direction $-e^{\i \theta}$. To do it, we need \textit{local functions}. Let us consider two oriented random graphs $G$ and $G'$ with out-degree $1$ defined on the same probability space and having the same vertex set $V\subset\R^{2}$. As previously, we call ancestor of $v$, the endpoint of the outgoing edge of $v$.

\begin{definition}
\label{def:localfct}
With the previous assumptions, a measurable function $F$ is said \textnormal{local} if there exists a (deterministic) set $D$, called the \textnormal{stabilizing set} of $F$, such that for any $X\in\R^{2}$; $F(X,G)=F(X,G')$ whenever each vertex $v\in V\cap (X+D)$ has the same ancestor in $G$ and $G'$.
\end{definition}

Thenceforth, the approximation result will be expressed as follows. Given a local function $F$, the distribution of $F(re^{\i \theta},\TT)$ converges in total variation towards the distribution of $F(O,\FF)$ as $r$ tends to infinity.

The directed forest $\FF$ approximating the RPT has been introduced by Ferrari \etal \cite{FLT}. This forest is given by the collection of coalescing one-dimensional random walks with uniform jumps in a bounded interval (with radius $\rho$) and starting at the points of a homogeneous PPP in $\R^{2}$. Its graph structure is based on local rules. Conversely, the directed forests used to approximate the Euclidean FPP Tree and the Directed LPP Tree are collections of coalescing semi-infinite geodesics with direction $-e^{\i \theta}$: their graph structures obey to global rules. Consequently, the proofs of Step 1 for the RPT (see Lemma \ref{lem:approxRPT} and comments just after Proposition \ref{prop:RPTApprox}) and for the FPP/LPP Trees will be radically different. In particular, a key argument used in the proof for optimized trees is that a geodesic from $X$ to $Y$ coincides with the one from $Y$ to $X$-- which does not hold in general for greedy trees.\\

\textbf{STEP 2:} The goal of the second step is to prove that the directed forest $\FF$ (with direction $-e^{\i \theta}$) a.s. has no bi-infinite path. Such a proof is now classic. First one states that all the paths eventually coalesce (towards the direction $-e^{\i \theta}$). This can be easily done if the directed forest presents some Markov property; this is the case of the forest approximating the RPT (see Section 4 of \cite{FLT}). Otherwise, an efficient topological argument originally due to Burton and Keane \cite{BK} may apply. See Licea and Newman \cite{LN} for an adaptation of this argument to the FPP/LPP context. This argument is fundamentally based on the fact that paths do not cross in dimension two.

In a second time, we deduce from the coalescence result that $\FF$ does not contain any bi-infinite path. Roughly speaking, when one looks to the past (i.e. towards the direction $e^{\i \theta}$), all the paths are finite. This part essentially uses the translation invariance property of the directed forest $\FF$.

Step 2 also says that the directed forest $\FF$ a.s. has only one topological end.\\

\textbf{STEP 3:} Combining results of the two previous steps, we get that $\chi_{r}(\theta,2\pi)$ tends to $0$ in probability, as $r$ tends to infinity. Let us roughly describe the underlying idea. The event $\chi_{r}(\theta,2\pi)\geq 1$ implies the existence of a very long path of $\TT$ crossing the arc of the circle $\CC_{r}$ centered at $re^{\i\theta}$ and with length $c$. Thanks to Step 1, this means that in the directed forest $\FF$, there exists a path crossing the segment centered at $O$, with length $c$ and orthogonal to $-e^{\i \theta}$, and coming from very far in the past (i.e. towards the direction $e^{\i \theta}$). Now, thanks to Step 2, this should not happen.\\

\textbf{STEP 4:} In this last step, we exhibit a uniform (on $r$) moment condition for $\chi_{r}(\theta,2\pi)$ to strengthen its convergence to $0$ in the $L^{1}$ sense using the Cauchy-Schwarz inequality:
$$
\EE \chi_{r}(\theta,2\pi) = \EE \chi_{r}(\theta,2\pi) \II_{\chi_{r}(\theta,2\pi)\geq 1} \leq M \sqrt{\PP(\chi_{r}(\theta,2\pi)\geq 1)}
$$
for any $r$ large enough.\\

As recalled in Introduction, this method has already been applied to the Radial Spanning Tree (RST) in a series of articles. The RST is locally approximated by the Directed Spanning Forest (Step 1): see Theorem 2.4 of \cite{BB} for the precise result and Section 2.1 for the construction of the DSF. The fact that this forest has no bi-infinite path is the main result of Coupier and Tran \cite{CT} (Step 2). Finally, Steps 3 and 4 are given in Coupier \etal \cite{BCT} and lead to the sublinearity result (Theorem 2).

This method works as well for the RPT $\TT_{\rho}$: the mean number of semi-infinite paths of $\TT_{\rho}$ is asymptotically sublinear. However, in Section \ref{sect:proofRPT}, we perform this method to obtain a better rate of convergence, namely $\EE\chi_{r}$ is $o(r^{3/4+\eps})$. Besides, this performed method which is developped in Section \ref{sect:proofRPT}, should apply to the RST provided a coalescence time estimate exists for its approximating directed forest.

Unlike the FPP/LPP context, the approximation method (STEP 1) for greedy trees (as RPT, RST) does not require the existence of semi-infinite paths in each deterministic direction-- which generally follows from the straight character of the considered tree. Hence, we can try to apply our method to the geometric random tree of Coletti and Valencia \cite{CV} without assuming that it admits an infinite number of semi-infinite paths (which should certainly be true).

Actually, our method says a little bit more. Conditionally to the fact that STEPS 1, 3 and 4 work, $\EE\chi_{r}(\theta,c)$ tends to $0$ if and only if the corresponding directed forest contains no bi-infinite path. Hence, it seems possible to use Example 2.5 of \cite{GeRaSe}, to prove that
$$
\liminf_{r\to\infty} \EE \chi_{r}(\pi/4,1) > 0 ~,
$$
where $\chi_{r}(\pi/4,1)$ concerns the rightmost semi-infinite paths (directed according to a cone with axis $e^{\i\pi/4}$) of some particular Directed LPP Tree satisfying (\ref{LPP-Hypo1}) but not (\ref{LPP-Hypo2}) and (\ref{LPP-Hypo3}).


\subsection{Absence of directional almost sure convergence}

For each of the three random trees studied in this paper, the r.v. $\chi_{r}(\theta,2\pi)$ tends to $0$ in probability but not almost surely. This absence of almost sure convergence is based on the same key result.

\begin{proposition}
\label{prop:directiondeterm}
Almost surely, there is at most one semi-infinite path (or geodesic) of $\TT$ with deterministic direction $\theta$. This statement holds for the RPT with $\theta\in[0 ; 2\pi)$ and $\rho>0$; for the Euclidean FPP Tree with $\theta\in[0 ; 2\pi)$ and $\alpha\geq 2$; for the Directed LPP Tree with $\theta\in[0 ; \pi/2]$ and hypotheses (\ref{LPP-Hypo1}), (\ref{LPP-Hypo2}) and (\ref{LPP-Hypo3}).
\end{proposition}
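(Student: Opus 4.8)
The plan is to argue by contradiction, following the classical scheme of Licea--Newman \cite{LN}: reduce to a translation-invariant statement, establish coalescence of same-direction branches, and then derive a contradiction from a Burton--Keane type counting combined with ergodicity. The planar non-crossing property of each model is the geometric input that makes the argument work.

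First I would upgrade the root-based statement to a statement about all vertices. For the Euclidean FPP Tree and the Directed LPP Tree the geodesics $\g_{X,Y}$ (resp.\ $\g_{z}$) are defined for arbitrary pairs of vertices, so one may speak of a semi-infinite geodesic in direction $\theta$ issued from \emph{any} vertex $X$, its existence being guaranteed by $[S2]$ applied with $X$ playing the role of the root. Since the actual root ($X^{O}$, resp.\ $O$) is a measurable function of the environment and the vertex set is countable, it suffices to prove that a.s., for every vertex $X$, at most one semi-infinite geodesic in direction $\theta$ emanates from $X$. For the RPT one reformulates the same way in terms of the ancestor map, reading a semi-infinite branch ``issued from $X$'' as a sequence $X=X_{0},X_{1},\ldots$ with $A(X_{i+1})=X_{i}$, and recalling that $A(X)$ depends only on $\NN$ near the segment $[O;X]$.

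Second, I would establish coalescence: any two semi-infinite branches (or geodesics) with the same asymptotic direction $\theta$ eventually merge. For the Type II trees this is the heart of the matter; it is obtained by the Burton--Keane argument \cite{BK} in the form adapted to percolation by Licea--Newman \cite{LN}. The non-crossing of geodesics in dimension two---which for the Euclidean FPP Tree with $\alpha\geq 2$ is precisely Lemma~5 of \cite{HN97}, and for the Directed LPP Tree is built into the directed planar structure---forces two non-coalescing $\theta$-geodesics to trap a region whose vertices carry $\theta$-geodesics trapped in turn, producing an incompatibility of densities. For the RPT, where geodesic reversibility fails, coalescence of the relevant branches is instead inherited from the Markov property of the coalescing-random-walk forest $\FF$ of \cite{FLT,FFW} (see Section~4 of \cite{FLT}), transported to $\TT_{\rho}$ through the identical local construction of the ancestor.

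Third, I would run the ergodic counting. If with positive probability some vertex carried two distinct $\theta$-branches, then by translation invariance and ergodicity of the underlying PPP (resp.\ of the i.i.d.\ weights $\{\omega(z),\,z\in\N^{2}\}$) the set of such ``branch vertices'' would have positive density. Using the coalescence of Step two together with the linear order on $\theta$-branches induced by the non-crossing property, the $\theta$-branches issued from the $\Omega(L^{2})$ branch vertices inside a box $[0,L]^{2}$ would have to merge into $O(L)$ distinct strands before exiting through the boundary of length $O(L)$, a contradiction. Hence a.s.\ no vertex carries two $\theta$-branches, which yields the proposition and in particular the at-most-one statement at the root. I expect the main obstacle to be Step two: the coalescence statement is what is genuinely model-dependent, since the Licea--Newman topological argument applies only to the reversible Type II models while the RPT requires the separate Markovian route; the hypotheses $\alpha\geq 2$ (Euclidean FPP) and $\theta\in[0;\pi/2]$ (LPP) enter exactly here, to secure the non-crossing on which the planar ordering of the $\theta$-branches rests.
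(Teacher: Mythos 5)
Your proposal follows a genuinely different route from the paper's, and it has at least one real gap. The paper's proof of Proposition~\ref{prop:directiondeterm} is the Fubini-plus-isotropy argument it attributes to Lemma~6 of \cite{HN97}, Theorem~1(3) of \cite{C3}, and Proposition~5 of \cite{BCT}: a Fubini argument shows that $\PP(\text{multiplicity in direction }\theta)=0$ for Lebesgue-a.e.\ deterministic $\theta$, and isotropy of the law then promotes ``a.e.\ $\theta$'' to ``every $\theta$'' for the RPT and the Euclidean FPP Tree, while for the Directed LPP Tree isotropy fails and the paper explicitly falls back on the LPP--TASEP coupling of \cite{C3}. Neither a coalescence statement for geodesics issued from arbitrary vertices, nor a Burton--Keane step, appears in the paper's argument. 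You instead set out to prove a substantially stronger fact (coalescence of \emph{all} same-direction geodesics) by the Licea--Newman machinery, which is heavy overkill for a root-based at-most-one statement.

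More to the point, your route runs into concrete difficulties in two of the three models. For the RPT, your Step~1 does not produce a translation-invariant family: the ancestor map $A(\cdot)$ is anchored at $O$, so the ``$\theta$-branch issued from $X$'' is a descendant path of $\TT_{\rho}$, \emph{not} a branch of the coalescing-random-walk forest $\FF_{\rho}$ of \cite{FLT,FFW}; the coalescence of $\FF_{\rho}$ that you want to invoke touches $\TT_{\rho}$ only through the far-from-the-root approximation of Proposition~\ref{prop:RPTStep1}, and transporting that coalescence back to the root of $\TT_{\rho}$ is precisely the hard work of Steps~3--4 of Section~\ref{sect:proofRPT}, which your sketch skips. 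For the Type~II trees, the encounter-point counting you gesture at rules out an unbounded number of non-coalescing $\theta$-geodesics at a fixed $\theta$ but does not by itself exclude exactly two coalescence classes; handling that case is exactly the delicate part of \cite{LN} and needs a real argument rather than ``an incompatibility of densities''. The one potential advantage of your route, if it were completed, is that the ergodic counting for the LPP Tree would avoid the TASEP coupling, since the full-plane geodesic forest $\FF$ on $\Z^{2}$ is translation invariant; but the counting must then be run on $\Z^{2}$ rather than on the non-invariant quadrant $\N^{2}$, and the two-class issue would persist.
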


This result has been proved in Lemma 6 of \cite{HN97} for the Euclidean FPP Tree and in Proposition 5 of \cite{BCT} for the Radial Spanning Tree. In both cases, a clever application of Fubini's theorem allows to get that, for a.e. $\theta$ (w.r.t. the Lebesgue measure), there a.s. is no semi-infinite path with direction $\theta$. Thus, by isotropy, the result can be extended to any $\theta$. Actually, the same proof works for the RPT. For this reason, we do not give any detail.

Proposition \ref{prop:directiondeterm} is given in Theorem 2.1 (iii) of \cite{GeRaSe} for Directed LPP Tree. Their proof, completely different from the previous argument, uses cocycles to overcome the lack of isotropy.\\

It remains to prove
$$
\PP \left( \limsup_{r\to\infty} \chi_{r}(\theta,c) \geq 1 \right) = 1 ~.
$$
from Proposition \ref{prop:directiondeterm}. This has been already written into details in \cite{BCT} in the case of the RST (see Corollary 6). Without major changes, the same arguments work for the RPT, the Euclidean FPP Tree and the Directed LPP Tree. Hence, we will just describe the spirit of the proof. By contradiction, let us assume that with positive probability, from a (random) radius $r_{0}$, there is no semi-infinite path of the tree $\TT$ crossing the arc $\arc_{r}(\theta,c)$ of the circle $\CC_{r}$ (with a deterministic direction $\theta$). Hence, with positive probability, there is no semi-infinite path crossing the semi-line $L(\theta,r_{0})=\{re^{\i\theta}, r\geq r_{0}\}$. Now, from both unbounded subtrees of $\TT$ located on each side of the semi-line $L(\theta,r_{0})$, it is possible to extract two semi-infinite paths, say $\g$ and $\g'$, which are as close as possible to $L(\theta,r_{0})$. This construction ensures that the region of the plane delimited by $\g$ and $\g'$-- in which the semi-line $L(\theta,r_{0})$ is --only contains finite paths of $\TT$. Since the tree $\TT$ satisfies statements $[S1]$ and $[S2]$, we can then deduce that $\g$ and $\g'$ have the same asymptotic direction $\theta$. However, such a situation never happens by Proposition \ref{prop:directiondeterm}.

\section{Convergence in $L^{1}$ for the Euclidean FPP Tree}
\label{sect:proofEuclFPPtree}

To get Theorem \ref{theo:sublinEuclFPPtree}, it suffices by isotropy to prove that the expectation of $\chi_{r}(0,2\pi)$ tends to $0$ as $r\to\infty$. The proof works as well when $2\pi$ is replaced with any constant $c>0$. In the sequel, we assume $\alpha\geq 2$.\\

\textbf{STEP 1:} Proposition \ref{prop:directiondeterm} combined with statement $[S2]$ of Proposition \ref{prop:EuclFPPtree-S12} says the Euclidean FPP Tree $\TT_{\alpha}$, rooted at $X^{O}$, a.s. contains exactly one semi-infinite geodesic with direction $\pi$. Actually, this argument applies to each Euclidean FPP Tree rooted at any $X\in\NN$ (for the same parameter $\alpha$). Hence, we denote by $\gamma^{\infty}_{X}$ the semi-infinite geodesic with direction $\pi$ of the Euclidean FPP Tree rooted at $X$. Let $\mathcal{F}_{\alpha}$ be the collection of the $\gamma^{\infty}_{X}$'s, for all $X\in\NN$. By uniqueness of geodesics, $\mathcal{F}_{\alpha}$ is a directed forest with direction $\pi$ which is built on the PPP $\NN$. In the geodesic $\gamma^{\infty}_{X}$, the neighbor of $X$ is called its \textit{ancestor} (in $\mathcal{F}_{\alpha}$), and is denoted by $\bar{A}(X)$. Be careful, the ancestor $\bar{A}(X)$ of $X$ is the vertex whose $X$ is the ancestor in the Euclidean FPP Tree rooted at $X$.

Our goal is to approximate the Euclidean FPP Tree $\TT_{\alpha}$ around $(r,0)$ by the directed forest $\mathcal{F}_{\alpha}$:

\begin{proposition}
\label{prop:EuclFPPtreeStep1}
Let $F$ be a local function whose stabilizing set is $D(O,L)$ (see Definition \ref{def:localfct}) with $L>0$. Then,
$$
\lim_{r\to\infty} d_{TV} \Big( F((r,0),\TT_{\alpha}) , F(O,\FF_{\alpha}) \Big) = 0 ~,
$$
where $d_{TV}$ denotes the total variation distance.
\end{proposition}

It is important to notice that the parameter $L$ occurring in the stabilizing set of $F$ does not depend on $r$.

Unlike the Directed Poisson Forest $\mathcal{F}_{\rho}$ used in Section \ref{sect:proofRPT} to approximate the RPT $\TT_{\rho}$, the graph structure of $\mathcal{F}_{\alpha}$ is clearly non local. Hence, the proof of Proposition \ref{prop:EuclFPPtreeStep1} will be radically different to the one about the RPT (see the paragraph below Proposition \ref{prop:RPTApprox}).

\begin{proof}
By the translation invariance property of the directed forest $\FF_{\alpha}$, we can write:
\begin{eqnarray*}
d_{TV} \left( F((r,0),\TT_{\alpha}) , F(O,\FF_{\alpha}) \right) & = & d_{TV} \left( F((r,0),\TT_{\alpha}) , F((r,0),\FF_{\alpha}) \right) \\
& \leq & \PP \left( F((r,0),\TT_{\alpha}) \not= F((r,0),\FF_{\alpha}) \right) \\
& \leq & \PP \left( \exists X \in \NN\cap D((r,0),L) , \, A(X) \not= \bar{A}(X) \right) ~.
\end{eqnarray*}
We now consider $\TT_{\alpha}$ and $\FF_{\alpha}$ built on the same vertex set $\NN$. Since the ancestors of $X$ differ in the Euclidean FPP Tree $\TT_{\alpha}$ (rooted at $X^{O}$) and in $\FF_{\alpha}$, the geodesics $\gamma_{X^{thanO},X}$ and $\gamma^{\infty}_{X}$ have only the vertex $X$ in common. Moreover, for any $\eps>0$, the root $X^{O}$ belongs to the disk $D(O,r^{\eps})$ with a probability tending to $1$. So it suffices to state that
\begin{equation}
\label{ApproxLocalFPP-1}
\lim_{r\to\infty} \PP \left( \exists X \in \NN\cap D((r,0),L) , \, \gamma_{X^{O},X}\cap\gamma^{\infty}_{X} = \{X\} \; \mbox{ and } \; |X^{O}|\leq r^{\eps} \right) = 0 ~.
\end{equation}

A key remark is that the geodesic from $X^{O}$ to $X$ (in the Euclidean FPP Tree rooted at $X^{O}$) coincides with the geodesic from $X$ to $X^{O}$ (in the Euclidean FPP Tree rooted at $X$). It is worth pointing out here this property does not hold in the RPT context. Hence, by translation invariance, the probability in (\ref{ApproxLocalFPP-1}) is bounded by
\begin{equation}
\label{ApproxLocalFPP-2}
\PP \left( \exists X \in \NN\cap D(O,L) , \, \exists X' \in \NN\cap D((-r,0),r^{\eps}) , \, \gamma_{X,X'}\cap\gamma^{\infty}_{X} = \{X\} \right) ~.
\end{equation}

The idea to prove that (\ref{ApproxLocalFPP-2}) tends to $0$ can be expressed as follows. By Lemmas \ref{lem:Cone-gamma(X)} and \ref{lem:Cone-T<out>} respectively, both geodesics $\gamma^{\infty}_{X}$ and $\gamma_{X,X'}$ are included in a cone with direction $(-1,0)$. However, having two long geodesics with a common deterministic direction should not happen according to Proposition \ref{prop:directiondeterm}.

Let $C(Y,\eta)=\{Y'\in\R^{2}, \theta(Y,Y')\leq\eta\}$ where $\theta(Y,Y')$ is the absolute value of the angle (in $[0;\pi]$) between $Y$ and $Y'$ and let $\gamma^{\infty}_{X}(M)$ be the geodesic $\gamma^{\infty}_{X}$ restricted to $D(O,M)^{c}$. Lemma \ref{lem:Cone-gamma(X)} says that, with high probability, $\gamma^{\infty}_{X}(M)$ is included in the cone $C((-1,0),\eta)$ for $M$ large enough.

\begin{lemma}
\label{lem:Cone-gamma(X)}
For all $\eta>0$,
$$
\lim_{M\to\infty} \PP \left( \forall X \in \NN\cap D(O,L) , \, \gamma^{\infty}_{X}(M) \; \mbox{ is included in } \; C((-1,0),\eta) \right) \, = \, 1 ~.
$$
\end{lemma}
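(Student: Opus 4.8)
The plan is to obtain this as a soft consequence of the asymptotic-direction statement $[S1]$ (Proposition \ref{prop:EuclFPPtree-S12}) applied to the finitely many geodesics $\gamma(X)$, $X\in\NN\cap B(O,L)$, together with a monotonicity argument in $M$. No quantitative estimate is needed at this stage; the genuinely quantitative content of Step~1 enters only later, via Lemma \ref{lem:Cone-T<out>} and Proposition \ref{prop:directiondeterm}.

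First I would reduce to the case $\eta<\pi/2$: since $C((-1,0),\eta)\subseteq C((-1,0),\eta')$ whenever $\eta\le\eta'$, it suffices to treat arbitrarily small $\eta$, and for $\eta<\pi/2$ the cone $C((-1,0),\eta)$ is a convex subset of $\R^2$. Then I would fix a realization outside a null set and a point $X\in\NN\cap B(O,L)$. By definition, $\gamma(X)$ is a semi-infinite geodesic with (asymptotic) direction $\pi$, so its vertex sequence $Y_0=X,Y_1,Y_2,\dots$ satisfies $Y_n/|Y_n|\to(-1,0)$; since $\NN$ is locally finite this also forces $|Y_n|\to\infty$. Hence only finitely many vertices $Y_n$ lie outside $C((-1,0),\eta)$. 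By convexity of the cone, a segment $[Y_n;Y_{n+1}]$ can meet the complement of $C((-1,0),\eta)$ only if one of its two endpoints does, so the set $\{Z\in\gamma(X):Z\notin C((-1,0),\eta)\}$ is contained in the finite union of the edges incident to those finitely many ``bad'' vertices, and is therefore a bounded subset of $\R^2$. Letting $M(X)=M(X,\eta,\omega)$ be the supremum of the Euclidean norms over this set (finite, and $0$ if the set is empty), we get $\gamma(X)^M=\gamma(X)\cap B(O,M)^c\subseteq C((-1,0),\eta)$ for every $M\ge M(X)$.

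Finally, $\NN\cap B(O,L)$ is a.s.\ finite, so $M^{\ast}:=\max\{M(X):X\in\NN\cap B(O,L)\}$ is an a.s.\ finite random variable, and on the event $\{M^{\ast}\le M\}$ the event in the statement of the lemma holds. That event being nondecreasing in $M$ (because $\gamma(X)^M$ shrinks as $M$ increases), letting $M\to\infty$ yields $\P(M^{\ast}\le M)\to\P(M^{\ast}<\infty)=1$, which is the claim. The only mildly delicate point in this scheme is the passage from ``the vertex sequence of $\gamma(X)$ has asymptotic direction $\pi$'' to ``$\gamma(X)$, seen as a union of segments, eventually lies in the cone'', and this is precisely what the convexity of $C((-1,0),\eta)$ for $\eta<\pi/2$ takes care of; everything else is routine.
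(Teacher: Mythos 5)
Your proof is correct and follows essentially the same route as the paper: both reduce to $[S1]$ applied to the a.s.\ finitely many $X\in\NN\cap B(O,L)$ and the fact that each $\gamma(X)$ a.s.\ has asymptotic direction $\pi$, so eventually lies in $C((-1,0),\eta)$. The paper phrases the conclusion via conditioning on $\NN(B(O,L))=k$ and an $\eps/n$ union bound, whereas you do it pathwise through the a.s.\ finite random threshold $M^{\ast}$ and monotonicity in $M$; the substance is the same, though your convexity remark about promoting ``vertices eventually in the cone'' to ``segments eventually in the cone'' is a useful explicit observation that the paper leaves implicit.
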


Now, let us proceed by contradiction and assume that the probability (\ref{ApproxLocalFPP-2}) does not tend to $0$. Thanks to Lemma \ref{lem:Cone-gamma(X)}, we can assert that, for $\eta>0$, there exist constants $c>0$ and $M$ large enough so that
\begin{equation}
\label{ApproxLocalFPP-3}
\PP \left(\begin{array}{c}
\exists X \in \NN\cap D(O,L) , \, \exists X' \in \NN\cap D((-r,0),r^{\eps}) \\
\mbox{such that } \, \gamma_{X,X'}\cap\gamma^{\infty}_{X} = \{X\} \\
\mbox{ and $\gamma^{\infty}_{X}(M)$ is included in } \, C((-1,0),\eta)
\end{array}\right) \, \geq \, c
\end{equation}
where $r$ can be chosen as large as we want.

Let $\TT^{\mbox{\tiny{out}}}_{\alpha,X}(Y)$ be the subtree rooted at $Y$ of the the Euclidean FPP Tree rooted at $X$. In other words, $\TT^{\mbox{\tiny{out}}}_{\alpha,X}(Y)$ is the collection of geodesics starting at $X$ and passing by $Y$, whose common part-- from $X$ to $Y$ --has been deleted. Lemma \ref{lem:Cone-T<out>} asserts that the subtrees $\TT^{\mbox{\tiny{out}}}_{\alpha,X}(Y)$ of any Euclidean FPP Tree rooted at a vertex $X$ in the disk $D(O,L)$ are becoming thinner as their root $Y$ is far away from the origin.

\begin{lemma}
\label{lem:Cone-T<out>}
For any $\eta>0$ small enough and for any $M$ large enough,
$$
\PP \left(\begin{array}{c}
\forall X \in \NN\cap D(O,L) , \, \forall Y \in \NN\cap D(O,M)^{c} , \\
\TT^{\mbox{\tiny{out}}}_{\alpha,X}(Y) \; \mbox{ is included in } \; C(Y,\eta)
\end{array}\right) \, \geq \, 1 - \frac{c}{2}
$$
where $c$ is the constant given in (\ref{ApproxLocalFPP-3}).
\end{lemma}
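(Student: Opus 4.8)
The plan is to read Lemma~\ref{lem:Cone-T<out>} as a quantitative repackaging of the straight character of the Euclidean FPP Tree, which is Theorem~2.6 of \cite{HN01} (stated there for $\alpha>1$, hence available here since $\alpha\geq 2$). Being \emph{straight} in the sense of Section~2.3 of \cite{HN01} means exactly that the subtrees of the tree become thin, at a polynomial rate, as their roots move away from the origin; so the genuine work is only to pass from the canonical root $X^{O}$ to the finitely many random roots $X\in\NN\cap B(O,L)$, and to convert the rate into the fixed angular bound $\eta$ by taking $M$ large.

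Concretely, I would first record the quantitative straightness estimate: there are exponents $0<\kappa_{1}<1$ and a finite constant $C_{1}$ such that, for the Euclidean FPP Tree rooted at $X^{O}$, the event
$$
\mathcal{S}_{M} \; = \; \bigcap_{Y\in\NN\setminus B(O,M)} \Big\{ \TT^{\mbox{\tiny{out}}}_{\alpha,X^{O}}(Y) \subseteq C\big(Y,C_{1}|Y|^{-\kappa_{1}}\big) \Big\}
$$
has probability tending to $1$ as $M\to\infty$. Since $C_{1}|Y|^{-\kappa_{1}}\leq C_{1}M^{-\kappa_{1}}$ for $|Y|\geq M$, choosing $M$ so large that $C_{1}M^{-\kappa_{1}}<\eta$ puts, on $\mathcal{S}_{M}$, every far subtree rooted at $X^{O}$ inside the cone $C(Y,\eta)$; this is the only place $\eta$ may have to be taken small, and it is harmless that the required $M$ depends on $\eta$ and on the constant $c$ of (\ref{ApproxLocalFPP-3}), because $c$ was fixed beforehand and does not depend on $M$, so there is no circularity. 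Next I would transfer this to a root $X\in\NN\cap B(O,L)$: by translation invariance of $\NN$ together with Slivnyak's theorem, conditionally on $X\in\NN$ the tree rooted at $X$ is distributed as the Euclidean FPP Tree built on $\{X\}\cup\NN'$ (with $\NN'$ a fresh PPP) rooted at $X$, and the straightness estimates of \cite{HN01} are stable under adding this single extra point; hence the conical bound holds for each such tree with the same exponents, uniformly. A Campbell--Mecke computation (or simply a union bound over the a.s.\ finite set $\NN\cap B(O,L)$, after discretizing the admissible roots by a fixed net of $B(O,L)$) then keeps the exceptional probability below $c/2$ once $M$ is large, which is the assertion of the lemma.

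The step I expect to be the main obstacle is precisely this transfer from the fixed root $X^{O}$ to the random roots in $B(O,L)$: changing the root can alter which vertices belong to the subtree of a given $Y$, so one cannot simply write $\TT^{\mbox{\tiny{out}}}_{\alpha,X}(Y)\subseteq\TT^{\mbox{\tiny{out}}}_{\alpha,X^{O}}(\cdot)$ and quote $\mathcal{S}_{M}$. A naive argument would note that if $Y\in\gamma_{X,Z}$ then, by uniqueness of geodesics, $\gamma_{X,Z}$ and $\gamma_{X^{O},Z}$ share a terminal segment from some vertex $W$ to $Z$, so $Z\in\TT^{\mbox{\tiny{out}}}_{\alpha,X^{O}}(W)$; but controlling $|W|$ and the angle $\theta(Y,W)$ already requires a straightness statement for a tree rooted near the origin rather than at $X^{O}$, so the clean route is to reprove the geodesic-deviation bounds of \cite{HN01} directly for a tree rooted at an arbitrary point of $B(O,L)$ — these bounds being uniform in the root up to translation — and only then to sum over the points of $\NN\cap B(O,L)$. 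Every other ingredient is an immediate consequence of the already-cited straightness.
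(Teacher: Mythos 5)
Your ``clean route'' in the final paragraph is essentially the paper's own proof: condition on $\NN(B(O,L))=k$, and for each of the finitely many vertices $X_{1},\ldots,X_{k}$ invoke the almost sure straightness of the Euclidean FPP Tree rooted at $X_{i}$ (Theorem~2.6 of \cite{HN01} holds for the tree rooted at any vertex of $\NN$, by translation invariance / Palm theory, which is precisely your Slivnyak remark), so that
$$
\lim_{M\to\infty}\PP\Big(\forall Y\in\NN\cap B(O,M)^{c},\ \TT^{\mbox{\tiny{out}}}_{\alpha,X_{i}}(Y)\subseteq C(Y,\eta)\ \Big|\ \NN(B(O,L))=k\Big)=1,
$$
and then a union bound over $i\leq k\leq n$ gives the claim for $M$ large. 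This is exactly the structure of the proof of Lemma~\ref{lem:Cone-gamma(X)}, which the paper explicitly mirrors.

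The first two paragraphs are an unnecessary detour. You correctly diagnose that the $X^{O}$-rooted event $\mathcal{S}_{M}$ cannot simply be transported to other roots because $\TT^{\mbox{\tiny{out}}}_{\alpha,X}(Y)$ and $\TT^{\mbox{\tiny{out}}}_{\alpha,X^{O}}(Y)$ need not be comparable (the coalescence point $W$ is not under control), but the paper never runs into this: it works directly with the trees rooted at each $X_{i}$ from the start, so no transfer is needed. Two smaller points: the quantitative polynomial rate $C_{1}|Y|^{-\kappa_{1}}$ is available from \cite{HN01} but is not used here --- a qualitative ``a.s.\ straight, hence $\PP(\cdot)\to 1$ as $M\to\infty$'' suffices, since $\eta$ is fixed before $M$; and no discretization by a net of $B(O,L)$ is needed, since the union bound runs over the (a.s.\ finite, after conditioning on $\NN(B(O,L))\leq n$) points of $\NN\cap B(O,L)$ themselves. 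Also note $X^{O}$ is itself random (the point of $\NN$ nearest $O$), not fixed, though this does not affect your argument.
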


From now on, we also set $\eps<1$ so that $D((-r,0),r^{\eps})$ is included in the cone $C((-1,0),\frac{\eta}{2})$ for $r$ large enough. Lemma \ref{lem:Cone-T<out>} implies that with high probability, the geodesic $\gamma_{X,X'}$ is also included in the cone $C((-1,0),\eta)$. Otherwise, this geodesic would pass by a Poisson point $Z$ outside the cone $C((-1,0),\eta)$ and by $X'$ which belongs to $D((-r,0),r^{\eps})$. For $r$ large enough, this would imply that the subtree $\TT^{\mbox{\tiny{out}}}_{\alpha,X}(Z)$ which contains $X'$ is not included in $C((-1,0),\frac{\eta}{2})$. As a consequence, for $M$ large enough,
\begin{equation}
\label{ApproxLocalFPP-4}
\PP \left(\begin{array}{c}
\exists X \in \NN\cap D(O,L) , \, \exists X' \in \NN\cap D((-r,0),r^{\eps}) \\
\mbox{such that } \; \gamma_{X,X'}\cap\gamma^{\infty}_{X} = \{X\} \, , \; \gamma_{X,X'}(M) \\
\mbox{ and $\gamma^{\infty}_{X}(M)$ are included in } \; C((-1,0),\eta)
\end{array}\right) \, \geq \, \frac{c}{2}
\end{equation}
where $r$ can be chosen as large as we want. Above, $\gamma_{X,X'}(M)$ denotes the geodesics $\gamma_{X,X'}$ restricted to $D(O,M)^{c}$. Now, the interpreted event in (\ref{ApproxLocalFPP-4}) and described in Figure \ref{fig:FPPdirdeterm} implies that one can find an Euclidean FPP Tree, rooted at a given $X$ in $D(O,L)$, from which it is possible to extract two geodesics included in $C((-1,0),\eta)$ and as long as we want. By Proposition \ref{prop:directiondeterm}, the probability of such an event must tend to $0$ with $r$. This contradicts (\ref{ApproxLocalFPP-4}).
\end{proof}

\begin{figure}[!ht]
\begin{center}
\psfrag{b1}{\small{$D(O,L)$}}
\psfrag{b2}{\small{$D((-r,0),r^{\eps})$}}
\psfrag{b3}{\small{$D(O,M)$}}
\psfrag{g1}{\small{$\gamma_{X,X'}$}}
\psfrag{g2}{\small{$\gamma^{\infty}_{X}$}}
\psfrag{x1}{\small{$X$}}
\psfrag{x2}{\small{$X'$}}
\includegraphics[width=11cm,height=5cm]{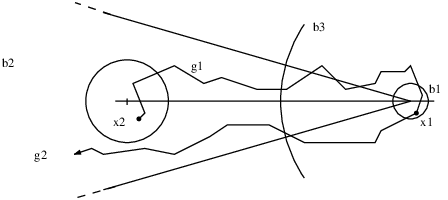}
\end{center}
\caption{\label{fig:FPPdirdeterm} This picture represents the event interpreted in (\ref{ApproxLocalFPP-4}). Poisson points $X$ and $X'$ respectively belong to $D(O,M)$ and $D((-r,0),r^{\eps})$. For $\eps\in(0;1)$ and $r$ large enough, this later disk is included in the cone $C((-1,0),\eta)$. Both geodesics $\gamma_{X,X'}$ and $\gamma^{\infty}_{X}$ restricted to $D(O,M)^{c}$ are inside $C((-1,0),\eta)$.}
\end{figure}

Step 1 ends with the proofs of Lemmas \ref{lem:Cone-gamma(X)} and \ref{lem:Cone-T<out>}.

\begin{proof}{\textit{(of Lemma \ref{lem:Cone-gamma(X)})}}
Let $\eta,\eps$ some positive real numbers and $n$ an integer such that the probability $\PP(\NN(D(O,L))\leq n)$ is larger than $1-\eps$. On the event $\NN(D(O,L))=k$, with $k\leq n$, let us denote by $X_{1},\ldots,X_{k}$ the $k$ vertices of the PPP $\NN$ inside the disk $D(O,L)$. On this event, for $1\leq i\leq k$, the semi-infinite geodesic $\gamma^{\infty}_{X_{i}}$ with direction $\pi$ and starting at $X_{i}$ is included in the cone $C((-1,0),\eta)$ far away from the origin:
$$
\lim_{M\to\infty} \PP \Big( \mbox{$\gamma^{\infty}_{X_{i}}(M)$ is included in $C((-1,0),\eta$)} \Big) = 1 ~.
$$
Then it is possible to choose $M$ large enough so that, for any $i$, the conditional probability
$$
\PP ( \mbox{$\gamma^{\infty}_{X_{i}}(M)$ is included in $C((-1,0),\eta$)} \;|\; \NN (D((r,0),L)) = k )
$$
is larger than $1-\frac{\eps}{n}$. Henceforth,
$$
\PP \left( \left. \begin{array}{c}
\gamma^{\infty}_{X_{1}}(M) , \ldots , \gamma^{\infty}_{X_{k}}(M) \\
\mbox{are included in $C((-1,0),\eta)$}
\end{array} \; \right| \; \NN (D((r,0),L)) = k \right) \; \geq \; 1 - \eps ~,
$$
for $M$ large enough, and then
$$
\PP \left( \begin{array}{c}
\forall X \in \NN\cap D(O,L) , \, \gamma^{\infty}_{X}(M) \\
\mbox{is included in $C((-1,0),\eta)$}
\end{array}\right) \; \geq (1 - \eps) \sum_{k=0}^{n} \PP (\NN (D((r,0),L)) = k) \geq (1 - \eps)^{2} ~.
$$
\end{proof}

\begin{proof}{\textit{(of Lemma \ref{lem:Cone-T<out>})}}
The proof is very close to the one of Lemma \ref{lem:Cone-gamma(X)}. We first restrict our attention to a finite number of vertices inside the disk $D(O,L)$. The Euclidean FPP Tree rooted at one of them, say $X$, is a.s. straight. This implies that:
$$
\lim_{M\to\infty} \PP \Big( \forall Y \in \NN\cap D(O,M)^{c} , \; \TT^{\mbox{\tiny{out}}}_{\alpha,X}(Y) \; \mbox{ is included in } \; C(Y,\eta) \Big) \; = \; 1
$$
from which it is not difficult to conclude.
\end{proof}

\textbf{STEP 2:} The fact that the directed forest $\FF_{\alpha}$ with direction $\pi$ a.s. has no bi-infinite geodesic has been proved in Theorem 1.12 of \cite{HN01} for $\alpha\geq 2$. This means that with probability $1$, the progeny of any $X\in\NN$, i.e. the set $\{Y\in\NN: X\in\gamma^{\infty}_{Y}\}$, is finite. The hypothesis $\alpha\geq 2$ is crucial here since it assures the noncrossing path property. Without this property, we are not able to prove the absence of bi-infinite geodesic in $\FF_{\alpha}$.\\

\textbf{STEP 3:} The goal of this step is to prove that the probability for $\chi_{r}(0,2\pi)$ to be larger than $1$ tends to $0$. It is based on Steps 2 and 3.\\

Let $I_{r}$ be the vertical segment centered at $(r,0)$ and with length $2\pi$. The Hausdorff distance between $\arc_{r}(0,2\pi)$ and $I_{r}$ tends to $0$ with $r$. So, with probability tending to $1$, any path of the Euclidean FPP Tree $\TT_{\alpha}$ crossing $\arc_{r}(0,2\pi)$ also crosses $I_{r}$. Hence, for any $\eps,R>0$ and $r$ large enough,
\begin{equation}
\label{EuclFPPtreestep3-1}
\PP( \chi_{r}(0,2\pi) \geq 1 ) \; \leq \; \PP \left(\begin{array}{c}
\mbox{$\exists$ a geodesic of $\TT_{\alpha}$ crossing $I_{r}$ and} \\
\mbox{afterwards leaving $D((r,0),R)$}
\end{array}\right) + \eps ~.
\end{equation}
The interpreted event mentioned in the r.h.s. of (\ref{EuclFPPtreestep3-1}) means one can extract from $\TT_{\alpha}$ a geodesic $(X_{1},\ldots,X_{\kappa})$ whose vertices $X_{1},\ldots,X_{\kappa-1}$ belong to the disk $D((r,0),R)$, but not $X_{\kappa}$, and the directed edges $(X_{1},X_{2})$ and $(X_{\kappa-1},X_{\kappa})$ respectively cross $I_{r}$ and the circle $\CC((r,0),R)$.

The approximation of $\TT_{\alpha}$ by $\FF_{\alpha}$ (i.e. Proposition \ref{prop:EuclFPPtreeStep1}) requires the use of local functions. It is the reason why we need to control the location of $X_{\kappa}$.

\begin{lemma}
\label{lem:unifEuclFPP}
Let us consider the event $A_{r,R}$ corresponding to ``Each edge $(X,A(X))$ of $\TT_{\alpha}$ s.t. $A(X)$ belongs to the disk $D((r,0),R)$ satisfies $X\in D((r,0),2R)$''. Then, as $R\to\infty$, its probability tends to $1$ uniformly on $r$.
\end{lemma}

The proof of Lemma \ref{lem:unifEuclFPP} is based on the same arguments used and detailled in Step 4 below. For this reason, it is omitted.

Lemma \ref{lem:unifEuclFPP} leads to:
\begin{equation}
\label{EuclFPPtreestep3-2}
\PP( \chi_{r}(0,2\pi) \geq 1 ) \; \leq \; \PP \left(\begin{array}{c}
\mbox{$\exists$ a geodesic $(X_{1},\ldots,X_{\kappa})$ in $\TT_{\alpha}$ such that} \\
\mbox{$X_{1},\ldots,X_{\kappa-1}\in D((r,0),R)$, $(X_{1},X_{2})\cap I_{r}\not=\emptyset$} \\
\mbox{and $X_{\kappa}\in D((r,0),2R)\setminus D((r,0),R)$.} \\
\end{array}\right) + 2 \eps ~,
\end{equation}
for $R$ and $r$ large enough. Let us remark that the uniform limit given by Lemma \ref{lem:unifEuclFPP} implies that up to now the parameters $r$ and $R$ are free from each other.

The interpreted event mentioned in the r.h.s. of (\ref{EuclFPPtreestep3-2}) can be written using a local function whose stabilizing set is a disk with radius $2R$. Then, Proposition \ref{prop:EuclFPPtreeStep1} implies that
\begin{equation}
\label{EuclFPPtreestep3-3}
\PP( \chi_{r}(0,2\pi) \geq 1 ) \; \leq \; \PP \left(\begin{array}{c}
\mbox{$\exists$ a geodesic $(X_{1},\ldots,X_{\kappa})$ of $\FF_{\alpha}$ such that} \\
\mbox{$X_{1},\ldots,X_{\kappa-1}\in D(O,R)$, $(X_{1},X_{2})\cap I_{0}\not=\emptyset$} \\
\mbox{and $X_{\kappa}\notin D(O,R)$.} \\
\end{array}\right) + 3 \eps ~,
\end{equation}
for $r\geq r^{\ast}(R)$ (where $I_{0}=\{0\}\!\times\![-\pi ; \pi]$). Now, the interpreted event in the r.h.s. just above provides the existence of a geodesic in $\FF_{\alpha}$ crossing the vertical segment $I_{0}$ and which is as long as we want in the backward sense, i.e. toward the progeny. Such an event has a probability smaller than $\eps$ thanks to Step 2 for $R$ large enough: $\PP(\chi_{r}(0,2\pi)\geq 1)\leq 4\eps$.\\

\textbf{STEP 4:} In order to strengthen the convergence in probability given by Step 3 into a convergence in $L^{1}$, it is sufficient to prove that
\begin{equation}
\label{EuclFPP-UnifMomCond}
\limsup_{r\to\infty} \EE \chi_{r}(0,2\pi)^{2} \, < \, \infty ~,
\end{equation}
and then to apply the Cauchy-Schwarz inequality. So, let us denote by $\psi_{r}$ the number of edges of the Euclidean FPP Tree $\TT_{\alpha}$ crossing the arc $\arc_{r}(0,2\pi)$ of $\CC_{r}$. Since $\chi_{r}(0,2\pi)\leq\psi_{r}$, we aim to prove that $\PP(\psi_{r}>n)$ decreases exponentially fast (and uniformly on $r$).

Let $R>0$ be a (large) real number. If all the edges counting by $\psi_{r}$ have their endpoints inside the disk $D((r,0),R)$, then $\psi_{r}>n$ forces the PPP $\NN$ to have more than $n$ vertices in $D((r,0),R)$ (otherwise this would contradict the uniqueness of geodesics). This event occurs with small probability (by Lemma \ref{lem:Talagrand}):
\begin{equation}
\label{step4FPP-1}
\PP ( \NN (D((r,0),R)) > n ) \leq e^{-n \ln \left( \frac{n}{e\pi R^{2}} \right)} ~.
\end{equation}
Assume now that (at least) one edge crossing the arc $\arc_{r}(0,2\pi)$ admits one endpoint outside the disk $D(O,R)$. Such a long edge creates a large disk avoiding the PPP $\NN$. Indeed, for any given vertices $X,Y$, if the geodesic $\gamma_{X,Y}$ is reduced to the edge $\{X,Y\}$ then the disk with diameter $[X;Y]$ does not meet the PPP $\NN$. This crucial remark appears in the proof of Lemma 5 in \cite{HN97} and requires $\alpha\geq 2$. To conclude it remains to exhibit an empty deterministic region. To do it, we can consider $\delta_{R}=\lfloor\pi R\rfloor+1$ disks $D_{1},\ldots,D_{\delta_{R}}$ with radius $R/3$ and centered at $\delta_{R}$ points of the circle $\CC((r,0),R/2)$. These $\delta_{R}$ centers can be chosen so that two consecutive ones are at distance smaller than $1$. Therefore, the existence of one edge crossing the arc $\arc_{r}(0,2\pi)$ and having one endpoint outside $D(O,R)$ forces (at least) one of the $D_{i}$'s to avoid the PPP $\NN$. This occurs with a probability smaller than
\begin{equation}
\label{step4FPP-2}
\left( \lfloor \pi R\rfloor + 1 \right) e^{-\pi(R/3)^{2}} ~.
\end{equation}
It remains to take $R=n^{1/4}$ (for instance) so that the upper bounds (\ref{step4FPP-1}) and (\ref{step4FPP-2}) tends to $0$ as $n\to\infty$ uniformly on $r$.

\section{Directional convergence in $L^{1}$ for the LPP Tree}
\label{sect:proofLPP}

Given an angle $\theta\in(0 ; \pi/2)$, recall that $\arc_{r}(\theta,1)$ is the arc of $\CC_{r}$ centered at $re^{\i\theta}$ and with length $1$ (replacing $1$ by any positive constant does not change the proof). Our goal is to show that the mean number $\EE \chi_{r}(\theta,1)$ of semi-infinite geodesics of the Directed LPP Tree $\TT$ crossing the arc $\arc_{r}(\theta,1)$ tends to $0$ as $r$ tends to infinity.\\

\textbf{STEP 1:} Let us introduce a directed forest with direction $\theta+\pi$ defined on the whole set $\Z^{2}$. To do it, we first extend from $\N^{2}$ to $\Z^{2}$ the collection of i.i.d. random weights $\omega(z)$. Replacing the orientation NE with SW, we can define as in Section \ref{sect:LPPtree} and for each $z\in\Z^{2}$, the SW-Directed LPP Tree on the quadrant $z-\N^{2}$. Such a tree a.s. admits exactly one semi-infinite geodesic with direction $\theta+\pi$, say $\gamma(z)$ (i.e. Statement $[S2]$ and Proposition \ref{prop:directiondeterm} hold). Then, we denote by $\FF$ the collection of these semi-infinite geodesics $\gamma(z)$ starting at each $z\in\Z^{2}$. By uniqueness of geodesics, $\FF$ is a forest. Moreover, each vertex $z$ has at most $3$ neighbors; one ancestor (among $z-(1,0)$ and $z-(0,1)$) and $0$, $1$ or $2$ children (among $z+(1,0)$ and $z+(0,1)$).

The directed forest $\FF$ with direction $\theta+\pi$ allows to locally approximate the Directed LPP Tree $\TT$ around $re^{\i\theta}$. The proof of Proposition \ref{prop:LPPtreeStep1} is based on the same ideas as the one of Proposition \ref{prop:EuclFPPtreeStep1} in the Euclidean FPP context. Especially, the SW-geodesic from $z$ to $z'$ coincides with the NE-geodesic from $z'$ to $z$. Indeed, if $(z_{0}=z',z_{1},\ldots,z_{n}=z)$ denotes the geodesic from $z'$ to $z$ then the sum $\omega(z_{0})+\omega(z_{1})+\ldots +\omega(z_{n-1})$ is maximal among NE-paths from $z'$ to $z$ if and only if $\omega(z_{1})+\ldots +\omega(z_{n})$ is maximal among SW-paths from $z$ to $z'$. So we do not give the proof.

\begin{proposition}
\label{prop:LPPtreeStep1}
Let $F$ be a local function. Then,
$$
\lim_{r\to\infty} d_{TV} \Big( F(re^{\i\theta},\TT) , F(O,\FF) \Big) = 0 ~.
$$
\end{proposition}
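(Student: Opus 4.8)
The plan is to mimic the proof of Proposition~\ref{prop:EuclFPPtreeStep1}, replacing Euclidean geodesics by SW/NE directed geodesics. First I would use the translation invariance of the directed forest $\FF$ to replace $F(O,\FF)$ by $F(re^{\i\theta},\FF)$, so that
$$
d_{TV}\Big(F(re^{\i\theta},\TT),F(O,\FF)\Big) \leq \PP\Big(\exists z\in\Z^{2}\cap B(re^{\i\theta},L),\ A(z)\not=\bar{A}(z)\Big),
$$
where $L$ is the locality parameter of $F$, $A(z)$ is the ancestor of $z$ in $\TT$ and $\bar{A}(z)$ its ancestor in $\FF$ (i.e.\ the successor of $z$ along the SW-semi-infinite geodesic $\gamma(z)$ with direction $\theta+\pi$). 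Since the number of lattice points in $B(re^{\i\theta},L)$ is a fixed finite constant, it suffices to show that for each fixed such $z$, the probability $\PP(A(z)\not=\bar{A}(z))$ tends to $0$ as $r\to\infty$. The event $A(z)\not=\bar{A}(z)$ forces the NE-geodesic $\gamma_{z}$ from the origin $O$ to $z$ and the SW-semi-infinite geodesic $\gamma(z)$ to have only the vertex $z$ in common, so that two long ``geodesics'' pass through the neighbourhood of $z$ with (asymptotically) the same direction.

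The key point is the coincidence of the SW-geodesic from $z$ to $z'$ with the NE-geodesic from $z'$ to $z$: the weight $\sum\omega(z_i)$ of a directed path does not depend on the orientation in which it is traversed, and a directed SW-path from $z$ to $z'$ is exactly the reversal of a directed NE-path from $z'$ to $z$. Hence, writing $X^{O}$ for the source $O$ and using translation invariance (shifting $z$ near the origin and $O$ near $-re^{\i\theta}$), the probability above is bounded by
$$
\PP\Big(\exists z\in\Z^{2}\cap B(O,L),\ \exists z'\in\Z^{2}\cap B(-re^{\i\theta},\text{const}),\ \gamma_{z,z'}\cap\gamma(z)=\{z\}\Big),
$$
where $\gamma_{z,z'}$ is now the NE-geodesic from $z'$ to $z$. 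One then shows, exactly as in Lemmas~\ref{lem:Cone-gamma(X)} and~\ref{lem:Cone-T<out>}, that with probability close to $1$ both $\gamma(z)$ (far from the origin) and $\gamma_{z,z'}$ (far from the origin) are confined to a narrow cone around the direction $\theta+\pi$; this uses only the straight character of the Directed LPP Tree (Proposition~\ref{prop:LPPtree-S12} via \cite{FP}) and statement $[S2]$. Having two disjoint long geodesics of the same tree (the SW-Directed LPP Tree rooted at $z$) confined to the same deterministic direction $\theta+\pi$ contradicts Proposition~\ref{prop:directiondeterm}, which applies since $\theta\in(0;\pi/2)$ forces $\theta+\pi\in(\pi;3\pi/2)$, a direction reachable by the SW-tree. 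Letting $r\to\infty$ forces this probability to $0$.

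The main obstacle, and the reason the analogue of Lemmas~\ref{lem:Cone-gamma(X)}--\ref{lem:Cone-T<out>} must be set up carefully here, is the lack of isotropy of the LPP model: one cannot rotate the picture to a canonical direction, so the cone-confinement estimates have to be invoked directly in the (fixed) direction $\theta+\pi$, and one must check that the endpoint $z'\in B(-re^{\i\theta},\text{const})$ indeed lies inside the cone $C(\theta+\pi,\eta/2)$ for $r$ large, so that $\gamma_{z,z'}$ passing outside $C(\theta+\pi,\eta)$ would violate the thinness of the relevant out-subtree. A minor additional point is bookkeeping on the lattice $\Z^{2}$ rather than on a Poisson process: the finite-set reduction is now completely deterministic (a ball of radius $L$ contains at most a fixed number of lattice sites), which in fact simplifies the argument compared with the Euclidean FPP case. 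Since every ingredient (the reversal identity, straightness, $[S2]$, Proposition~\ref{prop:directiondeterm}) is available for the Directed LPP Tree, the proof goes through verbatim up to these adaptations, which is why it is omitted in the text.
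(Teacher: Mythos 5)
Your proposal reproduces exactly the argument the paper intends but does not spell out: Section~\ref{sect:proofLPP} states that the proof of Proposition~\ref{prop:LPPtreeStep1} ``is based on the same ideas as the one of Proposition~\ref{prop:EuclFPPtreeStep1}'', with the reversal identity (SW-geodesic from $z$ to $z'$ equals NE-geodesic from $z'$ to $z$) as the key transferable ingredient, which is precisely the structure you lay out. The adaptations you flag (anisotropy forcing the cone estimates to be stated directly in direction $\theta+\pi$, and the deterministic finiteness of $\Z^{2}\cap B(re^{\i\theta},L)$ replacing the Poisson counting) are the right ones and introduce no gap.
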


\textbf{STEP 2:} Thanks to Theorem 2.1 (iii) of \cite{GeRaSe}, we already know that, with probability $1$, $\FF$ has no bi-infinite geodesic.\\

\textbf{STEP 3:} The proof that $\chi_{r}(\theta,1)$ tends to $0$ in probability, is exactly the same as in Section \ref{sect:proofEuclFPPtree}. Actually, some technical simplications arise because the edges of the Directed LPP Tree $\TT$ all are of length $1$.\\

\textbf{STEP 4:} In order to strengthen the convergence in probability given by Step 3 into a convergence in $L^{1}$, we need to control the number of edges of the Directed LPP Tree $\TT$ crossing the arc $\arc_{r}(\theta,1)$. Since the vertex set of $\TT$ is $\N^{2}$, this is automatically fulfilled.

\section{Convergence in $L^{1}$ for the RPT}
\label{sect:proofRPT}

Let $0<\alpha<1/4$. By isotropy, it suffices to prove that $\EE\chi_{r}(0,2r^{\alpha})$ tends to $0$ as $r\to\infty$. Recall that $\chi_{r}(0,2r^{\alpha})$ counts the intersection points between the semi-infinite paths of the RPT $\TT_{\rho}$ and the arc $\arc_{r}(\theta,2r^{\alpha})$ of the circle $\CC_{r}$.\\

Let us consider the rectangle
$$
\Rect(r,\beta,\eps) = [r,r+r^{\beta}]\!\times\![-r^{\beta/2+\eps},r^{\beta/2+\eps}] ~,
$$
where $\beta,\eps$ are positive real numbers. Let us also introduce the r.v. $\chi_{r}(\alpha,\beta,\eps)$ which counts the intersection points between the vertical segment $I_{r}=\{r\}\times[-r^{\alpha},r^{\alpha}]$ and paths $\gamma=(X_{1},\ldots,X_{n})$ of $\TT_{\rho}$ such that: $A(X_{i})=X_{i+1}$ for all $1\leq i\leq n-1$; $\gamma$ starts from the outside of $\Rect(r,\beta,\eps)$, i.e. $X_{1}\notin\Rect(r,\beta,\eps)$; and $\gamma$ crosses $I_{r}$ from right to left, i.e. $[X_{n-1};X_{n}]\cap I_{r}\not=\emptyset$ and $X_{n}(1)<r<X_{n-1}(1)$. Let us point out here that nothing forbids edges of $\TT_{\rho}$ to cross $I_{r}$ from left to right.

We first claim that:

\begin{lemma}
\label{lem:RPT-Claim1}
With the previous notations,
$$
\limsup_{r\to\infty} \EE \chi_{r}(0,2r^{\alpha}) \leq \limsup_{r\to\infty} \EE \chi_{r}(\alpha,\beta,\eps) ~.
$$
\end{lemma}

\begin{proof}
Let $\tilde{\chi}_{r}(\alpha,\beta,\eps)$ be the number of edges crossing the segment $I_{r}$ from right to left and belonging to semi-infinite paths of $\TT_{\rho}$. Since $\tilde{\chi}_{r}(\alpha,\beta,\eps)\leq \chi_{r}(\alpha,\beta,\eps)$ a.s. it is then sufficient to show that
$$
\EE|\chi_{r}(0,2r^{\alpha})-\tilde{\chi}_{r}(\alpha,\beta,\eps)| \to 0 ~,
$$
as $r\to\infty$. The difference $|\chi_{r}(0,2r^{\alpha})-\tilde{\chi}_{r}(\alpha,\beta,\eps)|$ is bounded from above by the number of edges of $\TT_{\rho}$ crossing one of the segments $[A^{+};B^{+}]$ or $[A^{-};B^{-}]$ where $A^{+}$ and $A^{-}$ (resp. $B^{+}$ and $B^{-}$) are the two endpoints of the arc $\arc_{r}(\theta,2r^{\alpha})$ (resp. the segment $I_{r}$)-- with $A^{+}(2)>0$ and $B^{+}(2)=r^{\alpha}$. As $r\to\infty$, $|A^{+}-B^{+}|$ and $|A^{-}-B^{-}|$ tends to $0$. So, the mean number of edges crossing one of the segments $[A^{+};B^{+}]$ or $[A^{-};B^{-}]$ also tends to $0$ as $r\to\infty$.
\end{proof}

The second step consists in approximating the Radial Poisson Tree $\TT_{\rho}$ in the direction $\theta=0$, i.e. in the vicinity of $(r,0)$, by the directed forest $\FF_{\rho}$ with direction $-(1,0)$ introduced by Ferrari \etal in \cite{FLT}. First, let us recall the graph structure of the forest $\FF_{\rho}$ built on the PPP $\NN$. Each vertex $X\in\NN$ is linked to the element of 
$$
\NN \cap \Big( X + (-\infty ; 0)\!\times\! [-\rho ; \rho] \Big)
$$
having the largest abscissa. It is a.s. unique, called the \textit{ancestor} of $X$ and denoted by $\A(X)$. By construction, the sequence of ancestors $(X_{0},X_{1},X_{2}\ldots)$ starting from any vertex $X$ in which $X_{0}=X$ and $\A(X_{n})=X_{n+1}$ for any $n$, is a semi-infinite path denoted by $\gamma^{\infty}_{X}$.

Thus, let us consider the r.v. $\eta_{r}(\alpha,\beta,\eps)$ which is the analogue of $\chi_{r}(\alpha,\beta,\eps)$ but for the directed forest $\FF_{\rho}$. Precisely, $\eta_{r}(\alpha,\beta,\eps)$ counts the  intersection points between $I_{r}$ and paths of $\FF_{\rho}$ starting from the outside of $\Rect(r,\beta,\eps)$. Remark that such paths necessarily cross $I_{r}$ from right to left.

Our second claim is:

\begin{lemma}
\label{lem:RPT-Claim2}
Assume $\alpha\leq\beta/2$ and $\beta+\eps<1/2$. With the previous notations,
$$
\limsup_{r\to\infty} \EE \chi_{r}(\alpha,\beta,\eps) \leq \limsup_{r\to\infty} \EE \eta_{r}(\alpha,\beta,\eps) ~.
$$
\end{lemma}

The proof of Lemma \ref{lem:RPT-Claim2} requires the following approximation result which is stronger than Proposition \ref{prop:EuclFPPtreeStep1} of Section \ref{sect:proofEuclFPPtree} or Proposition \ref{prop:LPPtreeStep1} of Section \ref{sect:proofLPP}. On the one hand, this approximation of $\TT_{\rho}$ by $\FF_{\rho}$ is no longer local since the size of the rectangle $\Rect(r,\beta,\eps)$ goes to infinity with $r$. On the other hand, this approximation result concerns the mean number of errors between $\TT_{\rho}$ and $\FF_{\rho}$ and not only the probability that at least one error occurs.

\begin{proposition}
\label{prop:RPTApprox}
Assume $\alpha\leq\beta/2$ and $\beta+\eps<1/2$. Then,
\begin{equation}
\label{RPTApproxMeanError}
\lim_{r\to\infty} \EE \# \Big\{ X \in \NN \cap \Rect(r,\beta,\eps) : \, A(X) \not= \A(X) \Big\} = 0 ~.
\end{equation}
\end{proposition}

Let us underline that, from Proposition \ref{prop:RPTApprox}, it is not difficult to derive a similar limit to Proposition \ref{prop:EuclFPPtreeStep1} of Section \ref{sect:proofEuclFPPtree} or Proposition \ref{prop:LPPtreeStep1} of Section \ref{sect:proofLPP}. Indeed, let us consider a sequence of local functions $(F_{r})_{r>0}$ such that the stabilizing set $D_{r}$ of $F_{r}$ is equal to the shifted rectangle $\Rect(r,\beta,\eps)-(r,0)$. Then,
\begin{eqnarray*}
d_{TV} \left( F_{r}((r,0),\TT_{\rho}) , F_{r}(O,\FF_{\rho}) \right) & = & d_{TV} \left( F_{r}((r,0),\TT_{\rho}) , F_{r}((r,0),\FF_{\rho}) \right) \\
& \leq & \PP \left( \exists X \in \NN \cap \Rect(r,\beta,\eps) , \, A(X) \not= \A(X) \right) \\
& \leq & \EE \# \Big\{ X \in \NN \cap \Rect(r,\beta,\eps) : \, A(X) \not= \A(X) \Big\} ~,
\end{eqnarray*}
which tends to $0$ as $r\to\infty$ thanks to (\ref{RPTApproxMeanError}).

\begin{proof}{\textit{(of Proposition \ref{prop:RPTApprox})}}
Let us set $n_{r}=\lfloor Cr^{3\beta/2+\eps}\rfloor$ where $C>0$ is a constant chosen large enough such that $\PP(\NN(\Rect(r,\beta,\eps))>n_{r}+k)$ is smaller than $e^{-(n_{r}+k)}$, for any integer $k$ and any large $r$. This is possible by Lemma \ref{lem:Talagrand} since $n_{r}$ is of the same order than the area of $\Rect(r,\beta,\eps)$. Hence, it is not difficult to show that
\begin{eqnarray*}
\EE \left\lbrack \# \Big\{ X \in \NN \cap \Rect(r,\beta,\eps) : \, A(X) \not= \A(X) \Big\} \II_{\NN(\Rect(r,\beta,\eps))>n_{r}} \right\rbrack \hspace*{1cm}\\
\hspace*{1cm}\leq \EE \left\lbrack \NN(\Rect(r,\beta,\eps)) \II_{\NN(\Rect(r,\beta,\eps))>n_{r}} \right\rbrack
\end{eqnarray*}
tends to $0$ as $r\to\infty$. To get (\ref{RPTApproxMeanError}), it then remains to state that
\begin{equation}
\label{RPTApproxMeanError<nr}
\lim_{r\to\infty} \EE \left\lbrack \# \Big\{ X \in \NN \cap \Rect(r,\beta,\eps) : \, A(X) \not= \A(X) \Big\} \II_{\NN(\Rect(r,\beta,\eps))\leq n_{r}} \right\rbrack = 0 ~.
\end{equation}
For any integer $k$,
$$
\EE \left\lbrack \left. \sum_{X \in \NN\cap\Rect(r,\beta,\eps)} \II_{A(X) \not= \A(X)} \; \right| \; \NN (\Rect(r,\beta,\eps)) = k \right\rbrack = \EE \left\lbrack \sum_{i=1}^{k} \ind_{A(X_{i}) \not= \A(X_{i})} \right\rbrack ~,
$$
where $X_{1},\ldots,X_{k}$ are independent random variables uniformly distributed in $\Rect(r,\beta,\eps)$. However, the probability that one of these points admits different ancestors in $\TT_{\rho}$ and $\FF_{\rho}$ is bounded:

\begin{lemma}
\label{lem:approxRPT}
Let $X=xe^{\i\theta}$ and $l\in(\rho,x/2)$. Then, there exist positive constants $c_{0}$, $x_{0}$ and $\theta_{0}$ (only depending on $\rho$) such that for any $x\geq x_{0}$ and $0\leq\theta\leq\theta_{0}$
$$
\PP ( A(X) \not= \A(X) ) \leq e^{-2\rho l} + c_{0} \Big(l^{2}\theta + \frac{1}{x} \Big) ~.
$$
\end{lemma}

Any point $xe^{\i\theta}$ of $\Rect(r,\beta,\eps)$ has an euclidean norm $x$ larger than $r$ and an angle $\theta$ smaller than $\arctan(r^{\beta/2+\eps}/r)$, so smaller than $r^{\beta/2+\eps}/r$. Applying Lemma \ref{lem:approxRPT} with $r$ large enough, we get:
$$
\EE \left\lbrack \left. \sum_{X \in \NN\cap \Rect(r,\beta,\eps)} \II_{A(X) \not= \A(X)} \; \right| \; \NN (\Rect(r,\beta,\eps)) = k \right\rbrack \hspace*{2cm}
$$
$$
\hspace*{7cm} \leq k \left(  e^{-2\rho l} + c_{0} \left( l^{2} \frac{r^{\beta/2+\eps}}{r} + \frac{1}{r} \right) \right) ~.
$$
It then follows:
$$
\EE \left\lbrack \# \Big\{ X \in \NN \cap \Rect(r,\beta,\eps) : \, A(X) \not= \A(X) \Big\} \II_{\NN(\Rect(r,\beta,\eps))\leq n_{r}} \right\rbrack \hspace*{2cm}
$$
$$
\hspace*{7cm} \leq n_{r} \left(  e^{-2\rho l} + c_{0} \left( l^{2} \frac{r^{\beta/2+\eps}}{r} + \frac{1}{r} \right) \right) ~.
$$
Finally, this upper bound tends to $0$ as $r\to\infty$ since $\beta+\eps<1/2$ and taking $l=\ln(r^{\beta'})$ with $\beta'$ large enough.
\end{proof}

The proof of Lemma \ref{lem:approxRPT} is strongly inspired from Theorem 2.4 of \cite{BB}.

\begin{proof}{\textit{(of Lemma \ref{lem:approxRPT})}}
Let $l_{X}$ be the difference of abscissas between $X=xe^{\i\theta}$ and its ancestor $\A(X)$ in the directed forest $\FF_{\rho}$. The horizontal cylinder $U_{X} = X + (-l_{X} ; 0)\!\times\! [-\rho ; \rho]$ avoids the PPP $\NN$: $U_{X}$ admits $\A(X)$ on its west side (see Figure \ref{fig:approxRPT}). Hence, the probability that $l_{X}$ is larger than a given $l$ is smaller than $e^{-2\rho l}$.

\begin{figure}[!ht]
\begin{center}
\psfrag{a}{\small{$O$}}
\psfrag{b}{\small{$\theta$}}
\psfrag{c}{\small{$X=xe^{\i\theta}$}}
\psfrag{d}{\small{$\A(X)$}}
\psfrag{e}{\small{$U_{X}$}}
\psfrag{f}{\small{$\CC_{x}$}}
\includegraphics[width=7.5cm,height=5cm]{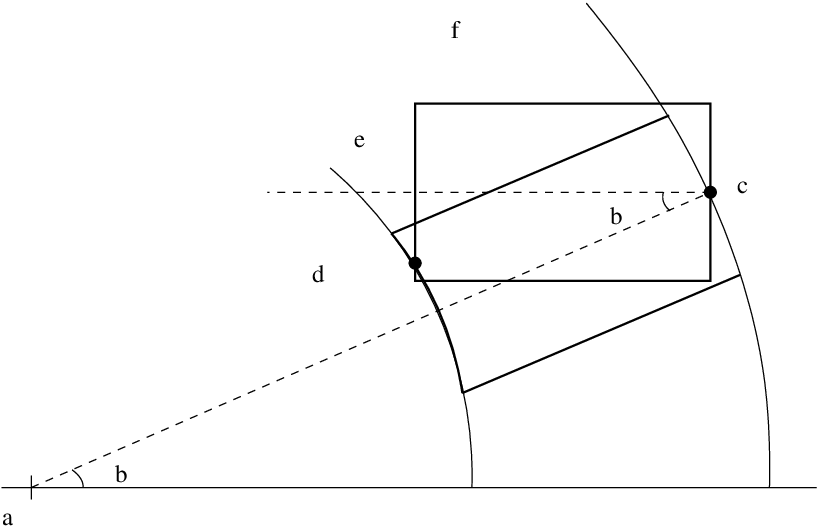}
\end{center}
\caption{\label{fig:approxRPT} Imagine $x$ is large and the angle $\theta$ is small enough so that $X=xe^{\i\theta}$ is very close to the horizontal axis. Then the axis of the cylinder $\textrm{Cyl}(X,\rho)$ is almost horizontal and it becomes difficult for the ancestors $A(X)$ and $\A(X)$ to be different.}
\end{figure}

From now on we assume that $l_{X}\leq l$. So as to the ancestors $A(X)$ and $\A(X)$ differ, two alternatives may be distinguished. Either $\A(X)$ belongs to the set
\begin{equation}
\label{set1}
\Big( X + [-l ; 0)\!\times\! [-\rho ; \rho] \Big) \setminus \textrm{Cyl}(X,\rho) ~,
\end{equation}
either $\A(X)$ belongs to $\textrm{Cyl}(X,\rho)$. This second alternative forces $A(X)$ to be in 
\begin{equation}
\label{set2}
\Big( \textrm{Cyl}(X,\rho) \setminus D(O,|A(X)|) \Big) \setminus U_{X} ~.
\end{equation}
By elementary computations, we check that the area of the sets (\ref{set1}) and (\ref{set2}) is smaller than $c_{0}(l^{2}\theta + 1/x)$ provided $x\geq x_{0}$, $\theta\leq \theta_{0}$ and $\rho\leq l\leq x/2$. The constants $c_{0}$, $x_{0}$ and $\theta_{0}$ only depend on $\rho$.
\end{proof}

Thanks to Proposition \ref{prop:RPTApprox}, we are now able to prove Lemma \ref{lem:RPT-Claim2}.

\begin{proof}{\textit{(of Lemma \ref{lem:RPT-Claim2})}}
Let us first denote by $\mathcal{Z}$ the following event: each edge $(X,A(X))$ of the RPT $\TT_{\rho}$ crossing $I_{r}$ from right to left (i.e. $A(X)(1)<r<X(1)$) satisfies $X\in\Rect(r,\beta,\eps)$. Hence, we write:
\begin{equation}
\label{BigMajorApproxRPT}
\EE \chi_{r}(\alpha,\beta,\eps) \leq \EE \left( \chi_{r}(\alpha,\beta,\eps)-\eta_{r}(\alpha,\beta,\eps) \right) \II_{\mathcal{Z}} + \EE \chi_{r}(\alpha,\beta,\eps)\II_{\mathcal{Z}^{c}} + \EE \eta_{r}(\alpha,\beta,\eps) ~.
\end{equation}
So, in order to obtain Lemma \ref{lem:RPT-Claim2}, we are going to state that
\begin{equation}
\label{BigMajorApproxRPT-1}
\lim_{r\to\infty} \EE \chi_{r}(\alpha,\beta,\eps)\II_{\mathcal{Z}^{c}} = 0 ~,
\end{equation}
thus
\begin{equation}
\label{BigMajorApproxRPT-2}
\mbox{a.s. } \, \left( \chi_{r}(\alpha,\beta,\eps)-\eta_{r}(\alpha,\beta,\eps) \right) \II_{\mathcal{Z}} \leq \# \Big\{ X \in \NN \cap \Rect(r,\beta,\eps) : \, A(X) \not= \A(X) \Big\}
\end{equation}
and then apply Proposition \ref{prop:RPTApprox}.

Let us start with the proof of (\ref{BigMajorApproxRPT-1}). The r.v. $\chi_{r}(\alpha,\beta,\eps)$ is smaller than the number of edges of the RPT crossing the arc, say $\arc'_{r}$, of the circle centered at the origin and passing through both endpoints of $I_{r}$. Let $Y^{(\alpha)}$ this number. Let us cover the (large) arc $\arc'_{r}$ by $\lfloor cr^{\alpha}\rfloor$ nonoverlapping (small) arcs with length $1$ and let us denote by $Y_{i}$ the number of edges of the RPT crossing the $i-$th small arc. The r.v.'s $Y_{i}$ are identically distributed by isotropy and admit a second order moment (see Lemma \ref{lem:2ndmomentRPT} at the end of this section). So, by Cauchy-Schwarz,
$$
\EE \chi_{r}(\alpha,\beta,\eps)^{2} \leq \EE \left( Y^{(\alpha)} \right)^{2} \leq \EE \left( \sum_{1\leq i\leq \lfloor cr^{\alpha}\rfloor} Y_{i} \right)^{2} \leq M r^{2\alpha} ~,
$$
for some $M>0$. Consequently,
$$
\EE \chi_{r}(\alpha,\beta,\eps)\II_{\mathcal{Z}^{c}} \leq M^{1/2} r^{\alpha} \PP(\mathcal{Z}^{c})^{1/2}
$$
which tends to $0$ as $r\to\infty$. Indeed, on the event $\mathcal{Z}^{c}$, there exists an edge crossing $I_{r}$ of length larger than $\min\{r^{\beta/2+\eps}-r^{\alpha},r^{\beta}\}$. Since $\beta/2\geq\alpha$, $\PP(\mathcal{Z}^{c})$ decreases exponentially fast.

Let us prove (\ref{BigMajorApproxRPT-2}). Let $i_{1},\ldots,i_{n}$ be $n=n(\omega)$ different points of $I_{r}$ counted by $\chi_{r}(\alpha,\beta,\eps)$ but not by $\eta_{r}(\alpha,\beta,\eps)$. On the event $\mathcal{Z}$, each point $i_{k}$ is generated by a path (at least) of $\TT_{\rho}$, say $\gamma_{k}$ starting from some Poisson point $X_{k}$, which is included in $\Rect(r,\beta,\eps)$, except its last edge crossing $I_{r}$. Since the intersection point $i_{k}$ is not counted by $\eta_{r}(\alpha,\beta,\eps)$, the path of the forest $\FF_{\rho}$ starting from $X_{k}$, say  $\gamma'_{k}$, leaves $\gamma_{k}$ before crossing $I_{r}$. Precisely, $\gamma_{k}$ and $\gamma'_{k}$ coincide until some Poisson point $Y_{k}\in\Rect(r,\beta,\eps)$ and $A(Y_{k})\not=\A(Y_{k})$. Remark that the bifurcation points $Y_{k}$, $1\leq k\leq n$, are all different. Indeed, $Y_{k}=Y_{l}$ would imply that $\gamma_{k}$ and $\gamma_{l}$ coincide beyond $Y_{k}=Y_{l}$ and then cross $I_{r}$ through the same point: $i_{k}=i_{l}$. Finally, we have exhibited $n$ different vertices of $\Rect(r,\beta,\eps)$ having different ancestors in $\TT_{\rho}$ and $\FF_{\rho}$. (\ref{BigMajorApproxRPT-2}) follows.
\end{proof}

The sequel of the proof only concerns the directed forest $\FF_{\rho}$ and has to state that the supremum limit of $\EE \eta_{r}(\alpha,\beta,\eps)$ is smaller than $1$ as $r\to\infty$. At the end of this section, a conclusion will combine all these intermediate steps and will lead to the expected result, i.e. the convergence to $0$ of $\EE\chi_{r}(0,2r^{\alpha})$.

The next step consists in proving that the paths counted by $\eta_{r}(\alpha,\beta,\eps)$ actually come in  the rectangle $\Rect(r,\beta,\eps)$ from its right side. Let us denote by $\bar{\eta}_{r}(\alpha,\beta,\eps)$ the number of intersection points between $I_{r}$ and paths of $\FF_{\rho}$ crossing the right side of $\Rect(r,\beta,\eps)$, i.e. $J_{r}=\{r+r^{\beta}\}\!\times\![-r^{\beta/2+\eps},r^{\beta/2+\eps}]$.

\begin{lemma}
\label{lem:RPT-Claim3}
Assume $\alpha\leq\beta/2$. The following equality holds:
$$
\limsup_{r\to\infty} \EE \eta_{r}(\alpha,\beta,\eps) = \limsup_{r\to\infty} \EE \bar{\eta}_{r}(\alpha,\beta,\eps) ~.
$$
\end{lemma}

\begin{proof}
Let $0<\eps'<\eps$ and $r$ large enough so that
\begin{equation}
\label{eps'eps}
r^{\alpha}+2r^{\beta/2+\eps'}<r^{\beta/2+\eps} ~.
\end{equation}
Let $A=(r+r^{\beta},r^{\alpha}+r^{\beta/2+\eps'})$. We denote by $\Delta^{A}_{r^{\beta}}$ the maximal deviation of the path $\gamma_{A}^{\infty}$ of $\FF_{\rho}$ w.r.t. the horizontal axis passing by $A$ over the segment $[r,r+r^{\beta}]$:
$$
\Delta^{A}_{r^{\beta}} = \sup \Big\{ |y-A(2)| ; \, (x,y) \in \gamma_{A} \,\mbox{ and }\, r\leq x\leq r+r^{\beta} \Big\} ~.
$$ 
In the same way, we consider the quantity $\Delta^{B}_{r^{\beta}}$ for $B=(r+r^{\beta},-r^{\alpha}-r^{\beta/2+\eps'})$. On the event
$$
\mbox{Dev} = \big\{ \max\{\Delta^{A}_{r^{\beta}} , \Delta^{B}_{r^{\beta}} \} < r^{\beta/2+\eps'} \big\} ~,
$$
the paths $\gamma_{A}^{\infty}$ and $\gamma_{B}^{\infty}$ do not cross the vertical segment $I_{r}$. Moreover, $\gamma_{A}^{\infty}$ and $\gamma_{B}^{\infty}$ are totally included in the rectangle $\Rect(r,\beta,\eps)$ before crossing $\{0\}\!\times\!\R$, thanks to (\ref{eps'eps}). So, on the event $\mbox{Dev}$, any intersection point counted by $\eta_{r}(\alpha,\beta,\eps)$ is produced by a path crossing $J_{r}$. In other words, $\eta_{r}(\alpha,\beta,\eps)\II_{\mbox{Dev}}$ is a.s. smaller than $\bar{\eta}_{r}(\alpha,\beta,\eps)$.

It then remains to show that $\EE\eta_{r}(\alpha,\beta,\eps)\II_{\mbox{Dev}}$ tends to $0$. We can proceed as in the proof of Lemma \ref{lem:RPT-Claim2}. The translation invariance property of the directed forest $\FF_{\rho}$ and the Cauchy-Schwarz inequality allow us to write:
$$
\EE \eta_{r}(\alpha,\beta,\eps)\II_{\mbox{Dev}} \leq C r^{\alpha} \PP(\mbox{Dev}^{c})^{1/2} ~,
$$
for some positive constant $C$. It is then enough to show that:

\begin{lemma}
\label{Dev-Claim3}
For any $\eps''>0$ and any integer $m$, $\PP(\Delta_{r}>r^{1/2+\eps''})$ is a $\mathcal{O}(r^{-m})$, where $\Delta_{r}$ denotes the maximal deviation of the path $\gamma_{(r,0)}^{\infty}$ of $\FF_{\rho}$ w.r.t. the horizontal axis over the segment $[0,r]$.
\end{lemma}

Indeed, replacing $r$ with $r^{\beta}$ and using the translation invariance property of $\FF_{\rho}$, Lemma \ref{Dev-Claim3} says that $\PP(\mbox{Dev}^{c})$ is a $\mathcal{O}(r^{-m \beta})$, for any $m$. This achieves the proof.
\end{proof}

\begin{proof}{\textit{(of Lemma \ref{Dev-Claim3})}}
Let $\eps''>0$ and $m\in\mathbb{N}$. Let us write $\gamma_{(r,0)}^{\infty}=(X_{n})_{n\geq 0}$ the sequence of successive ancestors of $(r,0)=X_{0}$, and for any index $n\geq 1$, let $(Y_{n},Z_{n})$ be the cartesian coordinates of $X_{n}-X_{n-1}$. Now, if we denote by $n(r)$ the first integer $n$ such that $X_{n}(1)$ is negative, the maximal deviation $\Delta_{r}$ can be expressed as
$$
\Delta_{r} = \max_{1\leq k\leq n(r)} \Big| \sum_{i=1}^{k} Z_{i} \Big| ~.
$$
Then,
\begin{equation}
\label{Delta(r,0)Rect}
\PP \Big( \Delta_{r} > r^{1/2+\eps''} \Big) \leq \PP (n(r) > \lfloor cr \rfloor) + \PP \Big( \max_{1\leq k\leq \lfloor cr \rfloor} \Big| \sum_{i=1}^{k} Z_{i} \Big| > r^{1/2+\eps''} \Big) ~.
\end{equation}
The inequality $n(r)>\lfloor cr\rfloor$ means that the partial sum $\sum_{1}^{\lfloor cr\rfloor} Y_{i}$ does not exceed $r$. As the $Y_{i}$'s are i.i.d. r.v.'s with an exponential law as common distribution, Lemma \ref{lem:Gut} applies. Provided the additional parameter $c$ is larger than $(\EE Y_{1})^{-1}$, the quantity $\PP(n(r)>\lfloor cr\rfloor)$ is a $\mathcal{O}(r^{-m})$.

To treat the second term of the r.h.s. of (\ref{Delta(r,0)Rect}), it suffices to apply Lemma \ref{lem:Youri} since the $Z_{i}$'s are i.i.d. according to the uniform distribution on $[-\rho,\rho]$.\end{proof}

This section ends with the following limit which requires the hypothesis $\alpha<\beta/2$ in a crucial way and an estimate of the coalescence time between two paths of $\FF_{\rho}$ stated in \cite{FFW}.

\begin{lemma}
\label{lem:RPT-Claim4}
Assume $\alpha<\beta/2$. Then,
$$
\limsup_{r\to\infty} \EE \bar{\eta}_{r}(\alpha,\beta,\eps) \leq 1 ~.
$$
\end{lemma}

\begin{proof}
Let us consider the r.v. $\tilde{\eta}_{r}(\alpha,\beta)$ defined as the number of intersection points between the vertical axis $\{r\}\times\R$ and paths of $\FF_{\rho}$ crossing the segment $\{r+r^{\beta}\}\!\times\![-r^{\alpha},r^{\alpha}]$. The translation invariance property of $\FF_{\rho}$ provides:
\begin{equation}
\label{Claim4-TransInv}
\EE \bar{\eta}_{r}(\alpha,\beta,\eps) \leq \EE \tilde{\eta}_{r}(\alpha,\beta) ~.
\end{equation}
Indeed, if $I$ and $J$ are two segments resp. included in $\{r\}\times\R$ and $\{r+r^{\beta}\}\times\R$ then $\eta_{r}(I,J)$ denotes the number of intersection points between $I$ and paths of $\FF_{\rho}$ which also cross the segment $J$. Then,
\begin{eqnarray*}
\EE \bar{\eta}_{r}(\alpha,\beta,\eps) & \leq & \EE \eta_{r}(\{r\}\!\times\![-r^{\alpha} , r^{\alpha}],\{r+r^{\beta}\}\!\times\!\R) \\
& \leq & \sum_{k\in\mathbb{Z}} \EE \eta_{r}(\{r\}\!\times\![-r^{\alpha},r^{\alpha}] , \{r+r^{\beta}\}\!\times\![(2k-1)r^{\alpha},(2k+1)r^{\alpha}]) \\
& = & \sum_{k\in\mathbb{Z}} \EE \eta_{r}(\{r\}\!\times\![(-2k-1)r^{\alpha},(-2k+1)r^{\alpha}] , \{r+r^{\beta}\}\!\times\![-r^{\alpha},r^{\alpha}]) \\
& = & \EE \tilde{\eta}_{r}(\alpha,\beta) ~.
\end{eqnarray*}

Let $X\in\NN$ and let $\gamma_{X}^{\infty}=(X_{n})_{n\geq 0}$ the sequence of successive ancestors of $X_{0}=X$. As in Section 2 of \cite{FFW}, we introduce a continuous time Markov process $\gamma_{X}^{\ast}=\{\gamma_{X}^{\ast}(t), t\leq X(1)\}$ associated to the sequence $(X_{n})_{n\geq 0}$ and defined by:
\begin{equation}
\label{paths-ast}
\gamma_{X}^{\ast}(t) = X_{n}(2) , \, \mbox{ for any $t$ such that $X_{n+1}(1) < t \leq X_{n}(1)$.}
\end{equation}
Then, $\tilde{\eta}_{r}(\alpha,\beta)$ is a.s. smaller than $\eta_{r}^{\ast}(\alpha,\beta)$ which counts the intersection points between the vertical axis $\{r\}\times\R$ and paths of $\{\gamma_{X}^{\ast}, X\in\NN\}$ crossing the segment $\{r+r^{\beta}\}\!\times\![-r^{\alpha}-\rho,r^{\alpha}+\rho]$.

To obtain Lemma \ref{lem:RPT-Claim4}, we are going to prove that the supremum limit of $\EE \eta_{r}^{\ast}(\alpha,\beta)$ is smaller than $1$ as $r\to\infty$. Actually, the passage from the forest $\FF_{\rho}$ to the collection $\{\gamma_{X}^{\ast}, X\in\NN\}$ acts as a discretization argument. Indeed, by construction, two paths $\gamma_{X}^{\ast}$ and $\gamma_{Y}^{\ast}$ crossing the axis $\{r+r^{\beta}\}\!\times\!\R$ on two different ordinates $x'$ and $y'$ satisfy a.s. $|x'-y'|>\rho$. We use this remark as follows. Let us first consider $\kappa_{r}$ elements of the vertical line $\{r+r^{\beta}\}\!\times\!\R$, say $x_{1},\ldots,x_{\kappa_{r}}$, such that $|x_{i+1}-x_{i}|=2\rho$ and $[x_{1};x_{\kappa_{r}}]$ contains $\{r+r^{\beta}\}\!\times\![-r^{\alpha}-\rho,r^{\alpha}+\rho]$. One can choose $\kappa_{r}\leq 3r^{\alpha}$. Thus, for any $i=1,\ldots,\kappa_{r}$, $\gamma_{i}^{\ast}$ is the path starting at $x_{i}$ and defined as in (\ref{paths-ast}). As a consequence, with probability $1$,
$$
\eta_{r}^{\ast}(\alpha,\beta) \leq \sum_{i=1}^{\kappa_{r}-1} \II_{\gamma_{i}^{\ast}(r)\not=\gamma_{i+1}^{\ast}(r)} + 1 ~.
$$
In the above upperbound, the term ``$+1$'' is inevitable since any two consecutive paths $\gamma_{X}^{\ast}$ and $\gamma_{Y}^{\ast}$ counted by $\eta_{r}^{\ast}(\alpha,\beta)$ are separated by one couple $(i,i+1)$ such that $\gamma_{i}^{\ast}(r)\not=\gamma_{i+1}^{\ast}(r)$. Now, the event $\{\gamma_{1}^{\ast}(r)\not=\gamma_{2}^{\ast}(r)\}$ means that the coalescence time of the two paths $\gamma_{1}^{\ast}$ and $\gamma_{2}^{\ast}$ is larger than $r^{\beta}$. Thanks to Lemma 2.10 of \cite{FFW}, its probability is bounded by $c_{2}/r^{\beta/2}$ where the constant $c_{2}>0$ only depends on $\rho$. It follows,
$$
\EE \eta_{r}^{\ast}(\alpha,\beta) \leq \kappa_{r}\PP \big( \gamma_{1}^{\ast}(r)\not=\gamma_{2}^{\ast}(r) \big) + 1 \leq 3r^{\alpha} \frac{c_{2}}{r^{\beta/2}} + 1
$$
which tends to $1$ as $r\to\infty$ since $\alpha<\beta/2$.
\end{proof}

We can now conclude. Combining Lemmas \ref{lem:RPT-Claim1}, \ref{lem:RPT-Claim2}, \ref{lem:RPT-Claim3} and \ref{lem:RPT-Claim4}, we obtain that the supremum limit of $\EE\chi_{r}(0,2r^{\alpha})$, say $c(\alpha)$, is smaller than $1$, for any $0<\alpha<1/4$. Let $M>0$ and $\alpha<\alpha'<1/4$. By isotropy, for $r$ large enough,
$$
\EE\chi_{r}(0,2r^{\alpha'}) \geq M \EE\chi_{r}(0,2r^{\alpha}) ~.
$$
Taking supremum limits, it follows that $1\geq M c(\alpha)$. When $M\to\infty$ this forces $c(\alpha)=0$.\\

This section ends with the proof of the following technical result.

\begin{lemma}
\label{lem:2ndmomentRPT}
Let $\psi_{r}$ be the number of edges of the RPT $\TT_{\rho}$ crossing the arc $\arc_{r}(0,l)$ of $\CC_{r}$ ($l>0$ does not depend on $r$). Then,
$$
\limsup_{r\to\infty} \EE \big[ \psi_{r}^{2} \big] < \infty ~.
$$
\end{lemma}

\begin{proof}
The proof is very close to the one of STEP 4 in Section \ref{sect:proofEuclFPPtree} about the Euclidean FPP Trees. Let us give some details in the context of the RPT. It is enough to prove that $\PP(\psi_{r}>n)$ decreases exponentially fast with $n$ and uniformly on $r$.

By Lemma \ref{lem:Talagrand}, we first control the number of vertices in the disk $D((r,0),R)$:
\begin{equation}
\label{lastlemmaRPT-1}
\PP ( \NN (D((r,0),R)) > n ) \leq e^{-n \ln \left( \frac{n}{e\pi R^{2}} \right)} ~,
\end{equation}
for any $R>0$. Now, the conjunction of $\psi_{r}>n$ and $\NN(D((r,0),R))\leq n$ implies the existence of a vertex $X$ outside $D((r,0),R)$ whose edge $[A(X) ; X]$ crosses the arc $\arc_{r}(0,l)$. Then, the random cylinder $\textrm{Cyl}(X,\rho)^{\ast}$ avoids the PPP $\NN$. Such situation should occur with small probability. Indeed, let us consider a family of $k(R)\leq 2\pi R/\rho$ deterministic rectangles of size $\rho\times R/3$ included in $D((r,0),R)\setminus D((r,0),R/2)$ and such that one of them is including in $\textrm{Cyl}(X,\rho)^{\ast}$. This selected rectangle avoids the PPP $\NN$:
\begin{equation}
\label{lastlemmaRPT-2}
\PP( \psi_{r} > n \; \mbox{ and } \; \NN (D((r,0),R)) \leq n ) \leq \frac{2\pi R}{\rho} e^{-\rho R/3} ~.
\end{equation}
To conclude, it suffices to take $R=n^{1/4}$ in the bounds given in (\ref{lastlemmaRPT-1}) and (\ref{lastlemmaRPT-2}). Remark also these two bounds do not depend on $r$.
\end{proof}

\section{Almost sure convergence for the RPT}
\label{sect:asRPT}

The goal of this section is to prove that $\chi_{r}/r^{\alpha}$ tends to $0$ with probability $1$ for any $\alpha>3/4$.\\

We first use Theorem \ref{theo:RPTstraight} of Section \ref{sect:RPTstraight} which asserts that, for any $\eps>0$, the event $A_{r}^{\eps}$ defined below has a probability tending to $1$ as $r\to\infty$.
$$
A_{r}^{\eps} = \left\{ \forall X \in \NN\cap D(O,r)^{c} , \, \cup_{Y\in \TT^{\mbox{\tiny{out}}}_{\rho}(X)} \textrm{Cyl}(Y,\rho)^{\ast} \subset C(X,|X|^{-\frac{1}{2}+\eps}) \right\} ~,
$$
where $\TT^{\mbox{\tiny{out}}}_{\rho}(X)$ is the subtree of the RPT $\TT_{\rho}$ rooted at $X$ and $C(X,|X|^{-\frac{1}{2}+\eps})$ is the semi-infinite cone with axis $X$ and opening angle $|X|^{-\frac{1}{2}+\eps}$.

Let $\eps>0$ and $r_{0}$ such that $\PP(A_{r_{0}}^{\eps})\geq 1/2$. For $r\geq r_{0}$, we split the circle $\CC_{r}$ into $r$ nonoverlapping arcs $\arc_{1},\ldots,\arc_{r}$ with length $2\pi$. Precisely, $\arc_{i}=\arc_{r}(\theta,2\pi)$. The number of semi-infinite paths of $\TT_{\rho}$ crossing $\arc_{i}$ is denoted by $X_{i}^{(r)}$. By isotropy of $\TT_{\rho}$, the $X_{i}^{(r)}$'s are identically distributed and satisfy $\chi_{r}=\sum_{i=1}^{r}X_{i}^{(r)}$. We also set
$$
S_{r} = \sum_{i=1}^{r} Y_{i}^{(r)} \, \mbox{ where } \, Y_{i}^{(r)} = X_{i}^{(r)} - \EE \big[ X_{i}^{(r)} \,|\, A_{r_{0}}^{\eps} \big] ~.
$$
Remark that the event $A_{r_{0}}^{\eps}$ is preserved under rotations. So, the r.v. $Y_{i}^{(r)}$'s are also identically distributed.

The next lemma reduces the proof of $\PP(r^{-\alpha}\chi_{r}\to 0)=1$ to: for any $r_{0}$ large enough,
\begin{equation}
\label{CVasAr0}
\PP \left( r^{-\alpha} S_{r} \to 0 \,|\, A_{r_{0}}^{\eps} \right) = 1 ~.
\end{equation}

\begin{lemma}
\label{lem:as-RPT-step1}
If $S_{r}/r^{\alpha}$ a.s. tends to $0$ as $r\to\infty$ conditionally to $A_{r_{0}}^{\eps}$, for any $r_{0}$ large enough, then $\chi_{r}/r^{\alpha}$ a.s. tends to $0$ as $r\to\infty$ too.
\end{lemma}

\begin{proof}
Let us write
$$
\frac{\chi_{r}}{r^{\alpha}} = \frac{S_{r}}{r^{\alpha}} + \frac{1}{r^{\alpha}} \EE \big[ \chi_{r} \,|\, A_{r_{0}}^{\eps} \big] ~.
$$
By the first part of Theorem \ref{theo:sublinRPT}, $r^{-\alpha} \EE[\chi_{r} | A_{r_{0}}^{\eps}]\leq 2r^{-\alpha} \EE[\chi_{r}]$ which tends to $0$ as $r\to\infty$. So the almost sure convergence of $S_{r}/r^{\alpha}$ to $0$ conditionally to $A_{r_{0}}^{\eps}$ implies the one of $\chi_{r}/r^{\alpha}$ still to $0$ and conditionally to $A_{r_{0}}^{\eps}$. Since $\PP(A_{r_{0}}^{\eps})\to 1$ as $r_{0}\to\infty$, the (unconditionally) a.s. convergence of $\chi_{r}/r^{\alpha}$ to $0$ follows.
\end{proof}

The next estimate is the key ingredient to obtain (\ref{CVasAr0}).

\begin{lemma}
\label{lem:as-RPT-step-key}
There exists $C>0$ such that for any $r\geq r_{0}$,
$$
\EE \big[ S_{r}^{2} \,|\, A_{r_{0}}^{\eps} \big] \leq C r^{3/2+\eps} ~.
$$
\end{lemma}

The proof of (\ref{CVasAr0}) is a consequence of Lemma \ref{lem:as-RPT-step-key} and the Borel-Cantelli lemma. Let $l>1$, $\delta>0$,
\begin{eqnarray*}
\PP \left( r^{-\alpha l} S_{r^{l}} > \delta \,|\, A_{r_{0}}^{\eps} \right) & \leq & \delta^{-2} r^{-2\alpha l} \EE \big[ S_{r^{l}}^{2} \,|\, A_{r_{0}}^{\eps} \big] \\
& \leq & \delta^{-2} C r^{l(-2\alpha +3/2+\eps)} 
\end{eqnarray*}
which is the general term of a convergent series. Indeed, $\eps>0$ can be chosen small enough so that $-2\alpha +3/2+\eps<0$ and $l$ large enough so that $l(-2\alpha +3/2+\eps)<-1$. Thus, the Borel-Cantelli lemma gives the a.s. convergence of $S_{r^{l}}/r^{\alpha l}$ to $0$ conditionally to $A_{r_{0}}^{\eps}$. Then, the a.s. convergence of $S_{r}/r^{\alpha}$ to $0$ (conditionally to $A_{r_{0}}^{\eps}$) easily follows. Given $r$, we consider the integer $n=n(r)$ such that $(n-1)^{l}\leq r<n^{l}$. Since the sequence $(S_{r})_{r>0}$ is nondecreasing a.s. we can write:
$$
0 \leq \frac{S_{r}}{r^{\alpha}} \leq \frac{S_{n^{l}}}{(n-1)^{\alpha l}} \leq \frac{S_{n^{l}}}{n^{\alpha l}} \frac{n^{\alpha l}}{(n-1)^{\alpha l}}
$$ 
which, conditionally to $A_{r_{0}}^{\eps}$, tends to $0$ as $r\to\infty$ with probability $1$.\\

So, it only remains to state Lemma \ref{lem:as-RPT-step-key} whose proof is based on the conditional independence (w.r.t. the event $A_{r_{0}}^{\eps}$) between $Y_{i}^{(r)}$ and $Y_{j}^{(r)}$ provided the difference $i-j$ is large enough.

\begin{proof}{\textit{(of Lemma \ref{lem:as-RPT-step-key})}}
Let $r\geq r_{0}$. Let us first expand the considered expectation:
\begin{equation}
\label{as-RPT-expand}
\EE \big[ S_{r}^{2} \,|\, A_{r_{0}}^{\eps} \big] = \sum_{i=1}^{r} \EE \big[ (Y_{i}^{(r)})^{2} \,|\, A_{r_{0}}^{\eps} \big] + \sum_{i\not= j} \EE \big[ Y_{i}^{(r)} Y_{j}^{(r)} \,|\, A_{r_{0}}^{\eps} \big] ~.
\end{equation}
The first term of the r.h.s. of (\ref{as-RPT-expand}) is bounded using isotropy, Lemma \ref{lem:2ndmomentRPT} and $\PP(A_{r_{0}}^{\eps})\geq 1/2$:
\begin{eqnarray*}
\sum_{i=1}^{r} \EE \big[ (Y_{i}^{(r)})^{2} \,|\, A_{r_{0}}^{\eps} \big] & = & r \EE \big[ (Y_{1}^{(r)})^{2} \,|\, A_{r_{0}}^{\eps} \big] \\
& \leq & r \EE \big[ (X_{1}^{(r)})^{2} \,|\, A_{r_{0}}^{\eps} \big] \\
& \leq & 2 r \EE \big[ (X_{1}^{(r)})^{2} \big] \\
& \leq & 2 M r 
\end{eqnarray*}
for some positive $M>0$. Let us now focus on the second term of the r.h.s. of (\ref{as-RPT-expand}):
$$
\sum_{i\not= j} \EE \big[ Y_{i}^{(r)} Y_{j}^{(r)} \,|\, A_{r_{0}}^{\eps} \big] = r \sum_{2\leq i\leq r} \EE \big[ Y_{1}^{(r)} Y_{i}^{(r)} \,|\, A_{r_{0}}^{\eps} \big] ~.
$$
Here is the reason why we have conditioned by $A_{r_{0}}^{\eps}$. On this event, the r.v. $X_{i}^{(r)}$ (so does $Y_{i}^{(r)}$) only depends on the PPP $\NN$ restricted to the semi-infinite cone with apex the origin and whose intersection with $\CC_{r}$ is the arc centered with $\arc_{i}$ and with length $2(\pi+\rho+r^{1/2+\eps})$. Then $Y_{1}^{(r)}$ and $Y_{i}^{(r)}$ are independent conditionally to $A_{r_{0}}^{\eps}$ provided the difference $i-1$ (taken modulo $r$) is larger than $2(\pi+\rho+r^{1/2+\eps})/2\pi$. When this is the case,
$$
\EE \big[ Y_{1}^{(r)} Y_{i}^{(r)} \,|\, A_{r_{0}}^{\eps} \big] = \EE \big[ Y_{1}^{(r)} \,|\, A_{r_{0}}^{\eps} \big] \EE \big[ Y_{i}^{(r)} \,|\, A_{r_{0}}^{\eps} \big] = 0 ~.
$$
Otherwise, by isotropy and the Cauchy-Schwarz inequality,
$$
\EE \big[ Y_{1}^{(r)} Y_{i}^{(r)} \,|\, A_{r_{0}}^{\eps} \big] \leq \EE \big[ (Y_{1}^{(r)})^{2} \,|\, A_{r_{0}}^{\eps} \big] \leq 2M
$$
as previously. As consclusion, there exists a positive constant $\kappa=\kappa(\rho)$ such that the r.h.s. of (\ref{as-RPT-expand}) is bounded from above by $2\kappa M r^{3/2+\eps}$. This achieves the proof of Lemma \ref{lem:as-RPT-step-key}.
\end{proof}

\section{The Radial Poisson Tree is straight}
\label{sect:RPTstraight}

For any vertex $X\in\NN$, we denote by $\TT^{\mbox{\tiny{out}}}_{\rho}(X)$ the subtree of the RPT $\TT_{\rho}$ rooted at $X$; $\TT^{\mbox{\tiny{out}}}_{\rho}(X)$ is the collection of paths of $\TT_{\rho}$ from $O$ to $X'\in\NN$, passing by $X$, whose common part from $O$ to $X$ has been deleted. Let $C(X,\alpha)$ for nonzero $X\in\R^{2}$ and $\alpha\geq 0$ be the cone $C(X,\alpha)=\{Y\in\R^{2}, \theta(X,Y)\leq\alpha\}$ where $\theta(X,Y)$ is the absolute value of the angle (in $[0;\pi]$) between $X$ and $Y$. Theorem \ref{theo:RPTstraight} means the subtrees $\TT^{\mbox{\tiny{out}}}_{\rho}(X)$ are becoming thinner as $|X|$ increases.

\begin{theorem}
\label{theo:RPTstraight}
With probability $1$, the Radial Poisson Tree $\TT_{\rho}$ is straight. Precisely, with probability $1$ and for all $\eps>0$, the subtree $\TT^{\mbox{\tiny{out}}}_{\rho}(X)$ is included in the cone $C(X,|X|^{-\frac{1}{2}+\eps})$ for all but finitely many $X\in\NN$.
\end{theorem}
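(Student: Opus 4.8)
The plan is to follow the classical Howard--Newman strategy for proving that a geometric random tree is straight: control the fluctuations of a single branch of $\TT_\rho$ by a suitable martingale/large-deviation estimate, deduce that with overwhelming probability a branch started at a vertex $X$ with $|X|$ large stays inside the cone $C(X,|X|^{-\frac12+\eps})$, and then sum over $X\in\NN$ via a Borel--Cantelli argument using the non-crossing property of the RPT (Property~\ref{propri:RPT}) to promote the pointwise control to a uniform one. Concretely, I would parametrise a branch of $\TT_\rho$ starting at $X$ by the successive ancestors $X=X_0,X_1,X_2,\dots$ and track the \emph{angular} deviation $\psi_n=\mathrm{arg}(X_n)-\mathrm{arg}(X)$ (lifted to $\R$), together with the radial coordinate $r_n=|X_n|$, which is strictly decreasing. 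By the very definition of $\textrm{Cyl}(X_n,\rho)$, the ancestor $X_{n+1}$ lies within euclidean distance $\rho$ of the segment $[O;X_n]$ and strictly closer to $O$; hence each step perturbs the argument by at most roughly $\rho/r_n$, and the increments $\psi_{n+1}-\psi_n$ are (conditionally) centred or nearly so, with variance of order $r_n^{-2}$ times a bounded factor coming from the size of the empty region $\textrm{Cyl}(X_n,\rho)^\ast$.

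The key steps, in order, would be: (1) establish a uniform exponential tail bound for the radial decrement $r_n-r_{n+1}$ at each step (this is essentially the one-dimensional computation already used in the proof of Lemma~\ref{lem:approxRPT}: the set $\textrm{Cyl}(X_n,\rho)^\ast$ avoids $\NN$, so $r_n-r_{n+1}$ is stochastically dominated by an exponential-type variable), which in particular shows that after $k$ steps the radius has dropped by an amount concentrated around a linear function of $k$; (2) show that the conditional mean of the angular increment $\psi_{n+1}-\psi_n$ given the past is $O(r_n^{-2})$ in absolute value and its conditional variance is $O(r_n^{-2})$ --- here one uses that the empty cylinder forces the ancestor to sit in a region of bounded diameter transverse to $[O;X_n]$, and that the conditional law of the ancestor's transverse position is close to symmetric; (3) form the martingale $M_k=\psi_k-\sum_{j<k}\E[\psi_{j+1}-\psi_j\mid\mathcal F_j]$ and apply a Bernstein/Azuma-type inequality, using that the sum $\sum_j r_j^{-2}$ telescopes like $1/|X|$ (since $r_j$ decreases from $|X|$ at roughly unit speed), to get $\PP(\sup_k|\psi_k|\ge t)\le \exp(-c\,t^2\,|X|)$ plus a correction from the drift term which is summably small; (4) choose $t=|X|^{-\frac12+\eps}$, so that the branch from $X$ leaves the cone with probability at most $\exp(-c\,|X|^{2\eps})$, and similarly for all branches in the subtree $\TT^{\mbox{\tiny{out}}}_\rho(X)$ --- but by the non-crossing property the whole subtree $\TT^{\mbox{\tiny{out}}}_\rho(X)$ is squeezed between its two ``extremal'' branches, so controlling those two suffices; (5) sum $\exp(-c\,|X|^{2\eps})$ over $X\in\NN$ using that $\NN$ has intensity $1$ (so the expected number of bad vertices in an annulus of radius $R$ is $O(R)\exp(-cR^{2\eps})$, summable), and conclude by Borel--Cantelli that only finitely many $X$ are bad.

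The main obstacle I expect is step~(3)--(4): one must handle the \emph{drift} term $\sum_j \E[\psi_{j+1}-\psi_j\mid\mathcal F_j]$ carefully, since a priori the angular increments need not be exactly centred --- the geometry of $\textrm{Cyl}(X_n,\rho)$ (a truncated strip, not a symmetric region, because of the curvature of $S(O,|X_n|)$ and of the truncation at $B(O,|X_n|)$) introduces a systematic bias, and one must show this bias is $O(r_n^{-2})$, not merely $O(r_n^{-1})$, so that its running sum stays $o(|X|^{-\frac12+\eps})$. A secondary difficulty is making the ``two extremal branches control the subtree'' argument precise: one needs that $\TT^{\mbox{\tiny{out}}}_\rho(X)$ is a connected planar region bounded by branches which themselves obey the same fluctuation estimate, which follows from non-crossing but requires a clean topological statement. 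Also, there is a minor technical point that a branch could in principle make many steps while the radius barely decreases; the exponential tail from step~(1) rules this out with a probability that must be folded into the Borel--Cantelli sum. Once these are in place, the remaining estimates are the routine computations sketched in the proof of Lemma~\ref{lem:approxRPT}, and the conclusion follows as stated.
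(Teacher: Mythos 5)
Your high-level strategy matches the paper's: a fluctuation estimate for the branch from a far point back to the origin, combined with a Borel--Cantelli argument and the Howard--Newman Lemma 2.7 conversion (which the paper invokes with exponent $\tfrac12$ in place of $\tfrac34$). But the technical core of your proposal, step (3), contains a scaling error that breaks the argument. You track the angular increment $\psi_{n+1}-\psi_n$, whose magnitude at radius $r_n$ is of order $\rho/r_n$, and you claim that the martingale quadratic variation $\sum_j r_j^{-2}$ scales like $1/|X|$. It does not. Since $r_j$ decreases from $|X|$ to $O(1)$ at roughly unit speed, $\sum_j r_j^{-2} \approx \int_1^{|X|} x^{-2}\,dx = O(1)$, dominated by the small-radius end of the branch, not the starting radius. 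Consequently Azuma/Bernstein gives $\PP(\sup_k |\psi_k| \ge t) \le 2\exp(-ct^2)$ with no factor of $|X|$, which is vacuous when $t=|X|^{-1/2+\eps}$. The angular parametrisation simply does not have the right scaling for a $\tfrac12$-exponent deviation bound over the full branch; a direct multiscale repair of your step only recovers exponent $\tfrac34$, not $\tfrac12$.

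The paper (Proposition \ref{prop:RPTdeviation}) sidesteps this entirely by controlling the \emph{Cartesian} ordinate, not the angle: the increments of the ordinate are bounded by $\rho$ (an absolute constant), the number of steps along the branch of length $r$ is $O(r)$ with overwhelming probability (Lemma \ref{lem:tau2}), so the classical $\sqrt{r}$ random-walk fluctuation gives $\Delta_r \lesssim r^{1/2+\eps}$ without any radius-dependent rescaling. The extra work is making these ordinate increments into a genuine martingale: the raw ordinate increment $X_{n+1}(2)-X_n(2)$ is not symmetric (the author explicitly notes that $X_n(2)\le U_1(2)+\ldots+U_n(2)$ fails and that the RPT ancestor can lie above the $\FF_\rho$ ancestor, so the direct comparison used for the RST in \cite{BB} is unavailable). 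This is handled by the ad hoc ``$\ast$-ancestor'' $A^\ast(X)$ construction (Lemma \ref{lem:starancestor}), which replaces $A(X)$ by a point on the same sphere with the same or larger ordinate and whose ordinate increment is exactly symmetrically distributed on $[-\rho;\rho]$; the modified path $\mathbf Z$ dominates the true branch $\gamma_r^+$ by the non-crossing property (Lemma \ref{lem:Zgras}). Your intuition that centering the increments is the obstacle is thus pointing at something real, but the resolution is a change of coordinate and a surgical modification of the ancestor map, not a bound on a drift term, and your angular-coordinate route cannot be rescued to give the $|X|^{-1/2+\eps}$ cone.
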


For a real number $r>0$, let us denote by $\gamma_{r}$ the path of the RPT from $X_{0}=(r,0)$ to $O$. It can be described by the sequence of successive ancestors of $X_{0}$, say $X_{0},X_{1},\ldots,X_{h(r)}=O$ where $h(r)$ denotes the number of steps to reach the origin. Let us denote by $\Delta_{r}$ the maximal deviation of $\gamma_{r}$ w.r.t the horizontal axis:
$$
\Delta_{r} = \max_{0\leq k\leq h(r)} |X_{k}(2)|
$$
(where $X_{k}(2)$ is the ordinate of $X_{k}$).

\begin{proposition}
\label{prop:RPTdeviation}
The following holds for all $\eps>0$ and all $n\in\N$,
\begin{equation}
\label{RPTdeviation}
\PP ( \Delta_{r} \geq r^{\frac{1}{2}+\eps} ) = \mathcal{O}(r^{-n}) ~.
\end{equation}
\end{proposition}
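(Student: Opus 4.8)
The plan is to control the deviation $\Delta_r$ of the branch $\gamma_r$ by a one-step-at-a-time analysis of the ancestor chain $X_0=(r,0), X_1, \dots, X_{h(r)}=O$, exploiting the fact that each jump from $X_k$ to $X_{k+1}=A(X_k)$ moves the point inside a thin cylinder $\mathrm{Cyl}(X_k,\rho)$ pointed towards $O$, so the ordinate can only change by a small amount at each step. First I would record the a priori bound $h(r)\le r/\rho$ (in fact each step decreases the euclidean norm by at least the ``empty region'' estimate $\mathrm{Cyl}(X,\rho)^\ast\cap\NN=\emptyset$, which forces a deterministic-in-probability drop), so that it suffices to bound, with overwhelming probability, the total vertical displacement accumulated over $\mathcal{O}(r)$ steps. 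The key geometric observation is that when $X_k$ has small argument $\theta_k=\mathrm{arg}(X_k)$ (of order $\Delta_r/|X_k|$), the axis of $\mathrm{Cyl}(X_k,\rho)$ is almost horizontal, so a single jump changes the ordinate by at most $\rho + (\text{length of jump})\cdot\sin\theta_k$, and the jump length has an exponential tail (the west side of the cylinder must be empty, as in the proof of Lemma \ref{lem:approxRPT}). Summing, a crude bound gives $|X_k(2) - X_{k+1}(2)| \le \rho + L_k \cdot (\Delta_r/|X_k|)$ where $L_k$ is the (exponentially tailed) horizontal jump length.

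Next I would turn this into a rigorous large-deviation statement. Introduce a stopping level: let $\tau$ be the first index $k$ at which $|X_k(2)|$ exceeds $r^{1/2+\eps}$, and work on the event $\{\tau < h(r)\}$, which is exactly $\{\Delta_r \ge r^{1/2+\eps}\}$. On the complementary-to-failure part of the trajectory (before $\tau$), every point has argument at most $r^{1/2+\eps}/(r-\Delta_r) = \mathcal{O}(r^{-1/2+\eps})$, so the deterministic $\rho$ contribution per step is the dominant ordinate increment and the $L_k(\Delta_r/|X_k|)$ contributions are $\mathcal{O}(r^{-1/2+\eps})$ each. The martingale structure is the crucial point: conditionally on the past, the signed vertical increment $Y_{k+1}:=X_{k+1}(2)-X_k(2)$ has a conditional law that, by the reflection symmetry of the cylinder construction about the segment $[O;X_k]$, is close to symmetric about $0$ up to a bias of order $\theta_k \cdot \mathbb{E}[L_k] = \mathcal{O}(r^{-1/2+\eps})$ per step. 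Thus $X_k(2)$ is, up to a drift that accumulates to only $\mathcal{O}(r\cdot r^{-1/2+\eps}) = \mathcal{O}(r^{1/2+\eps})$ over the whole branch, a martingale with bounded-variance increments; a Bernstein/Azuma-type inequality for martingales with exponential-tail increments then gives that the probability that $|X_k(2)|$ ever reaches $r^{1/2+\eps}$ within $\mathcal{O}(r)$ steps is $\mathcal{O}(\exp(-c r^{2\eps}))$, which is $o(r^{-n})$ for every $n$. Care is needed because the bias itself depends on $\Delta_r$; this is handled by the stopping-time device, freezing all arguments at the worst-case value $r^{-1/2+\eps}$ up to $\tau$.

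I would also need a short lemma controlling the number of steps $h(r)$ and the sum $\sum_k L_k$: conditionally on everything, each $\mathrm{Cyl}(X_k,\rho)^\ast$ avoids $\NN$, so the norm decrements $|X_k|-|X_{k+1}|$ dominate i.i.d.\ exponential-type variables with a uniform lower bound on the mean (this is the same computation as in Property \ref{propri:RPT}, giving $\PP(A((n,0))=O)\le e^{-\rho(n-\rho)}$ type bounds), whence $h(r)\le 2r/\rho$ and $\sum_{k} L_k \le C r$ except on an event of probability $\mathcal{O}(e^{-cr})$. Combining: on the ``good'' event of probability $1-\mathcal{O}(e^{-cr^{2\eps}})$ the branch stays within the cone and $\Delta_r < r^{1/2+\eps}$, which is (\ref{RPTdeviation}). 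The main obstacle I anticipate is making the ``approximately symmetric increment with small bias'' statement precise and uniform: one must show that the conditional law of $Y_{k+1}$ given the history really is controlled by a symmetric reference law with a bias that is genuinely $\mathcal{O}(\theta_k)$ and not merely $o(1)$, and that the exponential tail of the increments is uniform in $k$ and $r$ — this requires a careful but elementary area estimate for the relevant regions of the cylinder, analogous to the estimate of the sets (\ref{set1}) and (\ref{set2}) in the proof of Lemma \ref{lem:approxRPT}. Once those uniform bounds are in hand, the martingale concentration and the union over the $\mathcal{O}(r)$ steps are routine.
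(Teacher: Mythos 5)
Your overall strategy (controlling the ordinate of the branch by a martingale-type bound on the one-step increments $Y_{k+1}=X_{k+1}(2)-X_k(2)$) is the same one the paper pursues, and you correctly identify the crux: making the ``approximately symmetric with small bias'' claim rigorous. But your bias accounting does not close, and this is exactly where the paper has to do real work. You estimate each biased increment as $\mathcal{O}(r^{-1/2+\eps})$, hence an accumulated drift of $\mathcal{O}(r\cdot r^{-1/2+\eps})=\mathcal{O}(r^{1/2+\eps})$. There are two problems. First, the per-step bias is of order $\theta_k\,\EE[L_k]\asymp |X_k(2)|/|X_k|$, and $|X_k|$ decreases from $r$ down to the stopping radius $r^{1/2+\eps}$: once $|X_k|$ is of order $r^{1/2+\eps}$ while $|X_k(2)|$ is near the threshold, the per-step bias is $\mathcal{O}(1)$, not $\mathcal{O}(r^{-1/2+\eps})$, and there are $\sim r^{1/2+\eps}/\rho$ such steps. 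Second, even under your optimistic per-step bound, $\mathcal{O}(r^{1/2+\eps})$ is \emph{exactly} the order of the threshold you are trying to beat by a super-polynomial margin; treating the drift as an error term therefore yields nothing. The saving fact, which you do not use, is that the drift is mean-reverting (its sign is opposite to that of $X_k(2)$), and without exploiting this the argument fails.

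The paper resolves this by making the increments \emph{exactly} centered rather than approximately centered. Specifically: (i) it passes to the half-plane branch $\gamma^+_r$ built on $\NN\cap(\R\times\R^+)$, which dominates $\gamma_r$ from above; (ii) it introduces a modified ancestor $A^\ast(X)$ lying on the same sphere $S(O,|A(X)|)$ with $A^\ast(X)(2)\geq A(X)(2)$, carefully designed (via the three cases handling the cylinder's nose $V_X^\pm$) so that, conditionally on the position of $A^\ast(X)$ on its sphere, the ordinate increment $A^\ast(X)(2)-X(2)$ is \emph{exactly} symmetric on $[-\rho;\rho]$ (Lemma~\ref{lem:starancestor}); (iii) it shows the resulting chain $\mathbf{Z}$, started slightly above the axis, dominates $\gamma^+_r$ on the relevant annulus (Lemma~\ref{lem:Zgras}). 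With the drift literally zero conditionally on the norm sequence $|Z_1|,\ldots,|Z_m|$, the Kolmogorov-submartingale and Petrov bounds in Lemma~\ref{lem:Youri} apply cleanly, and a separate lemma (Lemma~\ref{lem:tau2}) caps the number of steps. This is also why the paper warns that the ``obvious'' martingale decomposition does not directly track $X_n(2)$. Your plan needs either a comparable domination/symmetrization device, or a genuine quantitative use of the restoring sign of the drift; as written it has a gap at the step ``a Bernstein/Azuma-type inequality then gives \dots $\mathcal{O}(\exp(-cr^{2\eps}))$''.
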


Theorem \ref{theo:RPTstraight} is a consequence of Proposition \ref{prop:RPTdeviation}. Indeed, (\ref{RPTdeviation}) implies that with high probability the path $\gamma_{r}$ remains inside the cone $C((r,0),f(r))$ with $f(r)=r^{\frac{1}{2}+\eps}/r$. We then conclude by isotropy of the RPT $\TT_{\rho}$.

\begin{proof}{\textit{(of Theorem \ref{theo:RPTstraight})}}
We first show that the number of vertices $X\in\NN$ whose deviation of the path from $X$ to $O$ w.r.t the axis $(OX)$ is larger than $|X|^{\frac{1}{2}+\eps}$ is a.s. finite. To do it, we can follow the proof of Theorem 5.4 of \cite{BB} and use Proposition \ref{prop:RPTdeviation}, the isotropic character of $\TT_{\rho}$ and the Campbell's formula. Thus, thanks to the Borel-Cantelli lemma, we prove that a.s. all but finitely many $X\in\NN$ satisfy $|X-A(X)|\leq |X|^{\frac{1}{2}}$. To conclude, it suffices to apply Lemma 2.7 of \cite{HN01} replacing $\frac{3}{4}$ with $\frac{1}{2}$.
\end{proof}

The rest of this section is devoted to the proof of Proposition \ref{prop:RPTdeviation}. Baccelli and Bordenave have proved in \cite{BB} the same result about the RST. They first bound the fluctuations of the radial path $\gamma_{r}$ by the ones of a directed path (which actually belongs to the DSF with direction $-(1,0)$), and then compute its fluctuations. The main difficulty of their work lies in the second step. The main obstacle here consists in comparing the radial path $\gamma_{r}$ to a directed one having good properties. Indeed, one easily observes that the ancestor of a given point $X$ (with $X(2)>0$) for the RPT $\TT_{\rho}$ may be above the ancestor of the same point but for the directed forest $\FF_{\rho}$: see Figure \ref{fig:approxRPT}.

As in \cite{BB}, let us start with introducing the path $\gamma^{+}_{r}$ of the RPT defined on $\NN\cap(\R\!\times\!\R^{+})$ starting at $X_{0}=(r,0)$ and ending at $O$. It is not difficult to see that, built on the same PPP $\NN$, $\gamma^{+}_{r}$ is above $\gamma_{r}$. Considering the same path $\gamma^{-}_{r}$ but this time defined on $\NN\cap(\R\!\times\!\R^{-})$ allows to trapp $\gamma_{r}$ between $\gamma^{+}_{r}$ and $\gamma^{-}_{r}$. By symmetry, it follows:
$$
\forall t > 0 , \; \PP ( \Delta_{r} \geq t ) \leq 2 \PP ( \Delta^{+}_{r} \geq t )
$$
where $\Delta^{+}_{r}$ denotes the maximal deviation of $\gamma^{+}_{r}$ w.r.t. the axis $(OX_{0})$.\\

From now on, we only consider the PPP $\NN\cap(\R\!\times\!\R^{+})$. We still denote by $A(X)$ the ancestor of $X$ using $\NN\cap(\R\!\times\!\R^{+})$ and by $(X_{0},X_{1},X_{2}\ldots)$ the sequence of successive ancestors of $\gamma^{+}_{r}$. In order to bound its fluctuations, a natural way to proceed would be to consider the  vectors $U_{n+1}=(X_{n+1}-X_{n})e^{-\i \mbox{arg}(X_{n})}$, for $n\geq 0$. Nevertheless, it is not clear how to compare the fluctuations of the sequence $(U_{1},U_{1}+U_{2}\ldots)$ with the ones of $\gamma^{+}_{r}$. In particular, the inequality $X_{n}(2)\leq U_{1}(2)+\ldots+U_{n}(2)$ does not hold (for instance, when $n=2$ and $X_{2}(1)$ larger than $X_{1}(1)$). This is the reason why the following construction is considered.

Let $X\in(\R_{+})^{2}$. Let $V_{X}^{-}$ be the set of points of $\textrm{Cyl}(X,\rho)$ whose abscissas are larger than $X(1)$, and let $V_{X}^{+}$ be its image by the reflection w.r.t. the axis $(OX)$. See Figure \ref{fig:starancestor}. We can then define the $\ast$-ancestor of $X$ as the element $A^{\ast}(X)$ of $(\R_{+})^{2}$, but not necessarily of $\NN$, satisfying $|A^{\ast}(X)|=|A(X)|$ and:
\begin{enumerate}
\item If $A(X)\notin V_{X}^{-}\cup V_{X}^{+}$ then $A^{\ast}(X)(2)=X(2)\pm d(A(X),(OX))$ with the symbol $+$ when $A(X)$ is above the line $(OX)$, and $-$ when $A(X)$ is below $(OX)$.
\item If $A(X)\in V_{X}^{-}$ then $A^{\ast}(X)(2)=X(2)-\min\{\rho,d(A(X),\{Y, Y(2)=X(2)\})\}$.
\item If $A(X)\in V_{X}^{+}$ then $A^{\ast}(X)(2)=X(2)+\min\{\rho,d(\mbox{Sym} A(X),\{Y, Y(2)=X(2)\})\}$ where $\mbox{Sym} A(X)$ denotes the image of $A(X)$ by the reflection w.r.t. the axis $(OX)$.
\end{enumerate}
Let us give the motivations for this construction. Leaving out the fact that there is no point of the PPP below the horizontal axis, the construction of the case $1$ ensures that the distribution of the random variable $A^{\ast}(X)(2)-X(2)$ is symmetric on $[-\rho;\rho]$ and $A^{\ast}(X)(2)\geq A(X)(2)$ (see Lemma \ref{lem:starancestor}). When $A(X)(1)\geq X(1)$ (case $2$) we have to proceed differently to ensure that $A^{\ast}(X)(2)\geq A(X)(2)$. The case $3$ is introduced so as to conserve a symmetric construction w.r.t. the line $(OX)$.

Finally, remark that the construction of the $\ast$-ancestor of $X$ is possible provided that
\begin{equation}
\label{conditionAast}
X(1)\geq 0 \; \mbox{ and  } \; X(2) + \rho \leq |A(X)| ~.
\end{equation}

\begin{figure}[!ht]
\begin{center}
\psfrag{a}{\small{$O$}}
\psfrag{b}{\small{$X$}}
\psfrag{c}{\small{$A(X)$}}
\psfrag{d}{\small{$A^{\ast}(X)$}}
\includegraphics[width=10cm,height=7cm]{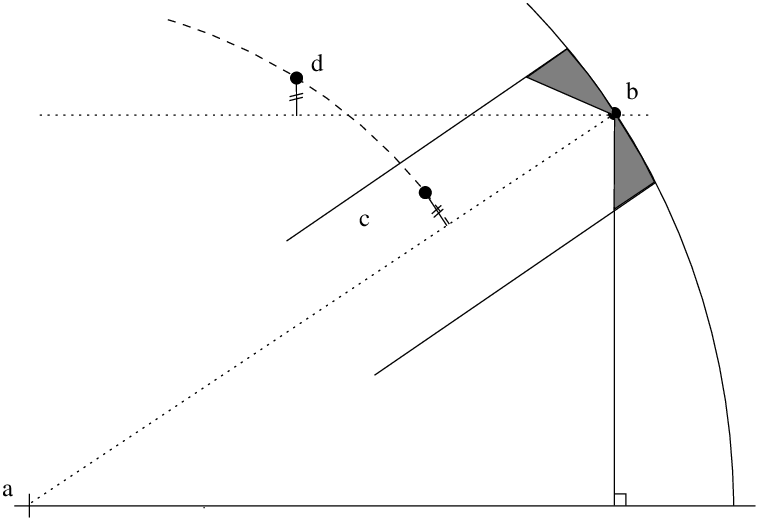}
\end{center}
\caption{\label{fig:starancestor} The gray sets are $V_{X}^{-}$ and $V_{X}^{+}$ which are symmetric from each other w.r.t. $(OX)$. On this picture, the ancestor $A(X)$ does not belong to $V_{X}^{-}\cup V_{X}^{+}$: this is the case $1$. The distance and the relative position between the $\ast$-ancestor $A^{\ast}(X)$ and the horizontal line $\{Y, Y(2)=X(2)\}$ are the same than between $A(X)$ and $(OX)$. Note also that $A(X)$ and $A^{\ast}(X)$ are on the same circle (centered at the origin).}
\end{figure}

\begin{lemma}
\label{lem:starancestor}
Let $X\in(\R_{+})^{2}$. Assume that $X$ and its ancestor $A(X)$ built on the PPP $\NN\cap(\R\!\times\!\R^{+})$ satisfy (\ref{conditionAast}). Then, $A^{\ast}(X)$ is well defined, $|A^{\ast}(X)|=|A(X)|$ and $A^{\ast}(X)(2)\geq A(X)(2)$. Moreover, conditionally to $\textrm{Cyl}(X,\rho)^{\ast}\subset\{Y, Y(2)\geq 0\}$, the distribution of the random variable $A^{\ast}(X)(2)-X(2)$ is symmetric on $[-\rho;\rho]$.
\end{lemma}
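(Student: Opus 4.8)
\medskip\noindent\textit{Proof strategy.} I would establish the three assertions in turn — well-definedness together with $|A^{\ast}(X)|=|A(X)|$, then the pointwise bound $A^{\ast}(X)(2)\geq A(X)(2)$, then the conditional symmetry — the first two being elementary plane geometry and the last a Palm computation for the Poisson process. Two facts are used throughout. First, condition (\ref{conditionAast}) gives $|A(X)|\geq X(2)+\rho>0$, so $A(X)$ is a genuine point of the PPP with $X(2)+\rho\leq|A(X)|\leq|X|$. Second, since $\NN$ avoids $B(O,\rho)$ and $A(X)\in\textrm{Cyl}(X,\rho)\subset[O;X]\oplus B(O,\rho)$, the orthogonal projection of $A(X)$ onto the line $(OX)$ a.s.\ lies on the segment $[O;X]$ (a projection past $X$ would give $|A(X)|>|X|$, one behind $O$ would give $|A(X)|\leq\rho$); writing $\phi=\mathrm{arg}(X)$, $\hat n=(-\sin\phi,\cos\phi)$ and $P_{t}=tX/|X|$, one thus has $A(X)=P_{t}+\varepsilon d\,\hat n$ with $0\leq t\leq|X|$, $d=d(A(X),(OX))\in[0,\rho]$ and $\varepsilon=\pm1$ according to the side of $(OX)$ on which $A(X)$ lies.

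For the pointwise bound I would run through the three cases of the definition. In Case 1 a direct computation gives $A^{\ast}(X)(2)-A(X)(2)=(|X|-t)\sin\phi+\varepsilon d(1-\cos\phi)$, which is nonnegative at once if $\varepsilon=+1$; if $\varepsilon=-1$ the restriction $A(X)\notin V_{X}^{-}$ means $A(X)(1)\leq X(1)$, i.e.\ $d\sin\phi\leq(|X|-t)\cos\phi$, whence $(|X|-t)\sin\phi\geq d\sin^{2}\phi/\cos\phi\geq d(1-\cos\phi)$, using only $\cos\phi\leq1$ (the degenerate case $\cos\phi=0$ forces $d=0$ and is immediate). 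In Cases 2 and 3 one first observes that a point of $\textrm{Cyl}(X,\rho)$ with abscissa larger than $X(1)$ has ordinate smaller than $X(2)$ (its norm being $\leq|X|$), so the distance to $\{Y,\,Y(2)=X(2)\}$ entering the definition is just $X(2)-A(X)(2)$, resp.\ $X(2)-(\mathrm{Sym}\,A(X))(2)$; substituting and splitting according to which term realizes the minimum with $\rho$ (and using $d\leq\rho$, $\cos\phi\leq1$) gives the bound in each subcase. The same formulas show $A^{\ast}(X)(2)\leq X(2)+\rho\leq|A(X)|$ by (\ref{conditionAast}); together with $A^{\ast}(X)(2)\geq A(X)(2)\geq0$ this places $A^{\ast}(X)(2)$ in $[0,|A(X)|]$, so there is a unique point of $(\R_{+})^{2}$ with norm $|A(X)|$ and that ordinate, which is $A^{\ast}(X)$, and $|A^{\ast}(X)|=|A(X)|$.

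The symmetry is the heart of the matter. Let $\sigma$ be the reflection of $\R^{2}$ across the line $(OX)$: it preserves norms, maps $\textrm{Cyl}(X,\rho)$ onto itself (both $[O;X]\oplus B(O,\rho)$ and $B(O,|X|)$ being $\sigma$-invariant), exchanges $V_{X}^{-}$ with $V_{X}^{+}=\sigma(V_{X}^{-})$, and swaps the two sides of $(OX)$; moreover $V_{X}^{-}\cap V_{X}^{+}\cap\textrm{Cyl}(X,\rho)=\emptyset$ (a point there would force $(t-|X|)\cos\phi>|d|\sin\phi\geq0$, impossible as $t\leq|X|$), so the three cases genuinely partition $\textrm{Cyl}(X,\rho)$. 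Writing $\ell=|A(X)|$ and letting $g_{\ell}$ be the deterministic function sending $Y\in\mathcal{A}_{\ell}:=S(O,\ell)\cap\textrm{Cyl}(X,\rho)$ to the value $A^{\ast}(X)(2)-X(2)$ produced by the construction when $A(X)=Y$, an inspection of the three cases yields $g_{\ell}\circ\sigma=-g_{\ell}$: in Case 1 because $d$ and the relevant vertical distance are $\sigma$-invariant while $\varepsilon$ flips sign, in Cases 2--3 because $\sigma$ interchanges the two cases without changing the distance.

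Finally I would invoke the Poisson structure. Conditionally on $|A(X)|=\ell$, the maximal-norm point of the finite-intensity Poisson process $\NN\cap(\R\times\R^{+})\cap\textrm{Cyl}(X,\rho)$ is, by the standard Palm computation in polar coordinates about $O$ (intensity $r\,dr\,d\theta$), uniformly distributed with respect to arc length on $S(O,\ell)\cap\textrm{Cyl}(X,\rho)\cap(\R\times\R^{+})$. On the event $\{\textrm{Cyl}(X,\rho)^{\ast}\subset\{Y,\,Y(2)\geq0\}\}$ — which, given $\ell$, is a deterministic condition on $\ell$ — the arc $\mathcal{A}_{\ell}$ lies in the closed upper half-plane (being the inner boundary of the cap $\textrm{Cyl}(X,\rho)\setminus B(O,\ell)$), so this uniform law is carried by $\mathcal{A}_{\ell}$ and, being uniform on the $\sigma$-invariant arc $\mathcal{A}_{\ell}$, is $\sigma$-invariant. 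Hence $g_{\ell}(A(X))$ and $g_{\ell}(\sigma(A(X)))=-g_{\ell}(A(X))$ are equidistributed conditionally on $|A(X)|=\ell$ and on the conditioning event, and integrating over $\ell$ yields that $A^{\ast}(X)(2)-X(2)$ is symmetric on $[-\rho,\rho]$ conditionally on $\textrm{Cyl}(X,\rho)^{\ast}\subset\{Y,\,Y(2)\geq0\}$. I expect this last step to be the only real obstacle: making the ``uniform on the arc given the maximal norm'' statement precise, and checking that conditioning on $\textrm{Cyl}(X,\rho)^{\ast}$ lying above the axis is exactly what restores the $\sigma$-symmetry of the region that the half-plane truncation would otherwise destroy.
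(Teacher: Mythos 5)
Your proof is correct, and its overall plan coincides with the paper's: establish $A^{\ast}(X)(2)\geq A(X)(2)$ by treating the three cases of the construction separately, and obtain the conditional symmetry from the fact that the conditional law of $A(X)$ is invariant under the reflection $\sigma$ across $(OX)$ while the $\ast$-construction anti-commutes with $\sigma$. Two local departures deserve a comment. For Case~3 the paper introduces the tangent $\mathcal{D}'$ to $S(O,|X|)$ at $X$, observes that it bisects the angle between the horizontal $\mathcal{D}$ through $X$ and $\mathcal{D}''=\sigma(\mathcal{D})$, and uses $d(\mathrm{Sym}\,A(X),\mathcal{D}'')\leq d(\mathrm{Sym}\,A(X),\mathcal{D})$; you instead write $A(X)$ in the orthonormal frame $(X/|X|,\hat n)$ and check the inequality by direct substitution, splitting on which term attains the minimum with $\rho$ --- a more elementary route that avoids having to place $\mathrm{Sym}\,A(X)$ on the correct side of the bisector. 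For the symmetry, the paper states in one line that the orthogonal projection of $A(X)$ on $X^{\perp}$ is symmetric on $[-\rho;\rho]$ and that ``by construction'' the same then holds for $A^{\ast}(X)(2)-X(2)$; your version disintegrates over $\ell=|A(X)|$, notes that the conditioning event $\textrm{Cyl}(X,\rho)^{\ast}\subset\{Y,\,Y(2)\geq 0\}$ is a deterministic constraint on $\ell$ alone so that, given this event and $\ell$, $A(X)$ is uniform on the $\sigma$-invariant arc $\mathcal{A}_{\ell}$, and then invokes $g_{\ell}\circ\sigma=-g_{\ell}$. This is precisely the paper's ``by construction'' made explicit, and it isolates the point the paper leaves implicit, namely that conditioning on the half-plane inclusion is exactly what restores the reflectional symmetry that restricting $\NN$ to $\R\times\R^{+}$ would otherwise break. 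Both variants are valid; yours is more detailed where the paper is terse.
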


\begin{proof}
Conditionally to $\textrm{Cyl}(X,\rho)^{\ast}\subset\{Y, Y(2)\geq 0\}$, the distance between $A(X)$ and the line $(OX)$ is a random variable whose distribution is symmetric on $[-\rho;\rho]$. By construction of the $\ast$-ancestor $A^{\ast}(X)$, the same holds for $A^{\ast}(X)(2)-X(2)$.

It remains to prove that $A^{\ast}(X)(2)\geq A(X)(2)$. In the case $1$, the $\ast$-ancestor $A^{\ast}(X)$ has the same ordinate than $A(X)e^{-\i \mbox{arg}(X)}$, which is larger than $A(X)(2)$ since $A(X)\notin V_{X}^{-}$. In the case $2$, it suffices to write
$$
X(2) - A^{\ast}(X)(2) \leq d(A(X),\{Y, Y(2)=X(2)\}) = X(2) - A(X)(2) ~.
$$
Besides, the case $2$ is the only way to have $A^{\ast}(X)=A(X)\in\NN$. The case $3$ requires more details. Let $\mathcal{D}$ be the horizontal line passing by $X$, let $\mathcal{D}'$ be the tangent line touching the circle $\CC_{|X|}$ at $X$, and let $\mathcal{D}''$ be the image of $\mathcal{D}$ by the reflection w.r.t. the axis $(OX)$. Geometrical arguments show that $\mathcal{D}'$ actually is the line bisector of $\mathcal{D}$ and $\mathcal{D}''$. As a consequence, $\mbox{Sym} A(X)$ is closer to $\mathcal{D}''$ than to $\mathcal{D}$:
$$
d(A(X),\mathcal{D}) = d(\mbox{Sym} A(X),\mathcal{D}'') \leq d(\mbox{Sym} A(X),\mathcal{D}) ~.
$$
Using $A(X)\in V_{X}^{+}$ and $d(V_{X}^{+},\mathcal{D})\leq\rho$, we can then conclude:
\begin{eqnarray*}
A^{\ast}(X)(2) - X(2) & = & \min\{\rho , d(\mbox{Sym} A(X),\mathcal{D})\} \\
& \geq & \min\{\rho , d(A(X),\mathcal{D})\} \\
& = & d(A(X),\mathcal{D}) \\
& = & A(X)(2) - X(2) ~.
\end{eqnarray*}
\end{proof}

Now, we need to control that with high probability the ancestor $A(X)$ (build on the PPP $\NN\cap(\R\!\times\!\R^{+})$) is not too far from $X$. Let $\Omega(r,\kappa)=\{\forall X \in (\R_{+})^{2}\cap D(O,r) , \, |X|-|A(X)|\leq \kappa \}$.

\begin{lemma}
\label{lem:Omega}
Let $n\in\N$ and $\alpha>0$. For $r$ large enough, $\PP(\Omega(r,r^{\alpha})^{c})=\mathcal{O}(r^{-n})$.
\end{lemma}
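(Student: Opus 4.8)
The plan is to reduce the claim to void probabilities of the Poisson process $\NN$ over well chosen deterministic regions, in the same spirit as the empty‑tube arguments already used in Section~\ref{sect:proofRPT}. The starting observation is that, by the very definition of the ancestor in the RPT, if some $X\in(\R_+)^2\cap B(O,r)$ satisfies $|X|-|A(X)|>r^\alpha$, then the set $\textrm{Cyl}(X,\rho)^{\ast}$ contains no point of $\NN\cap(\R\times\R^+)$ (the process used in this section), and it contains in particular the portion of the $\rho$‑neighbourhood of the segment $[O;X]$ lying between the spheres $S(O,|A(X)|)$ and $S(O,|X|)$, i.e. an empty tube of "radial length" at least $r^\alpha$. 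Since $|A(X)|\ge0$, the event moreover forces $|X|>r^\alpha$, so one always deals with a long empty tube sitting far from the origin on the radial scale.

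The second step is a discretisation. I would cover the possible values of $(|X|,\arg X)$ by a grid with unit spacing in the radial variable, $s_i=\lfloor|X|\rfloor$, and --- this is the delicate choice --- an angular spacing $w_i$ of order $\rho/s_i$, because a fixed angular sector has transversal width proportional to its radius and one must keep this width of order $\rho$ all along the tube. To a cell $(i,j)$ attach the deterministic annular sector $D_{i,j}$ with radii in $[s_i-r^\alpha+1,\,s_i]$ and angular half‑width of order $\rho/s_i$ about a grid direction $\theta_j^{(i)}$, the latter chosen to stay inside the half‑plane $\R\times\R^+$. Elementary geometry then gives $D_{i,j}\subset\textrm{Cyl}(X,\rho)^{\ast}\cap(\R\times\R^+)$ whenever $X$ lies in cell $(i,j)$ and $|X|-|A(X)|>r^\alpha$: the radial inclusion uses $|A(X)|<|X|-r^\alpha<s_i+1-r^\alpha$ together with $s_i\le|X|$, and the transversal inclusion uses the choice of $w_i$. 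The same inequality $s_i>r^\alpha-1$, which is \emph{forced} by the event, shows that the Lebesgue measure of $D_{i,j}$ is at least of order $\rho\,r^\alpha$, uniformly in $i$, $j$ and $r$ --- including the innermost cells $s_i\approx r^\alpha$, where the sector is thin. Since there are $\mathcal{O}(r)$ radial cells and $\mathcal{O}(r/\rho)$ angular ones per radius, a union bound together with the exact void probability $\PP(\NN\cap D_{i,j}=\emptyset)=\exp(-\mathrm{Leb}(D_{i,j}))$ yields
\[
\PP(\Omega(r,r^\alpha)^c)\ \le\ \sum_{i,j}\PP(\NN\cap D_{i,j}=\emptyset)\ \le\ \mathcal{O}(r^{2}/\rho)\,e^{-c\rho r^\alpha}\ =\ \mathcal{O}(r^{-n})
\]
for every $n$, since exponential decay beats any polynomial.

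The main obstacle is precisely the adaptivity of the angular mesh: a naive \emph{uniform} angular grid fine enough near $S(O,r)$ would, for vertices $X$ at distance $\approx r^\alpha$ from the origin, produce deterministic sectors of Lebesgue measure only $\approx r^{2\alpha-1}$, which is useless as soon as $\alpha<1/2$. Letting $w_i$ depend on the radial bin repairs this, at the price of carrying the parameter $s_i$ through all the geometric estimates and of keeping the sectors inside the half‑plane $\R\times\R^+$, since the ancestor $A(X)$ here is built on $\NN\cap(\R\times\R^+)$. Everything else is routine bookkeeping; alternatively, the lower bound on $\mathrm{Leb}(D_{i,j})$ can be fed into Lemma~11.1.1 of \cite{T}, exactly as done elsewhere in the paper.
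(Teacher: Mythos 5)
Your argument is correct and follows the same overall scheme as the paper's proof: discretize the possible positions of a ``bad'' vertex $X$ into polynomially many deterministic cells, produce for each cell a deterministic region of Lebesgue measure of order $\rho\,r^{\alpha}$ that the Poisson process must avoid on the bad event, and conclude by a union bound using the exponential void probability (equivalently Lemma~11.1.1 of \cite{T}). The only genuine difference is the choice of discretization. You use a polar grid with radial bins of width~$1$ and a radius-dependent angular half-width $w_i\sim\rho/s_i$, and you correctly observe that this radius-dependence is forced if one wants the deterministic sector $D_{i,j}$ to keep area $\gtrsim\rho r^{\alpha}$ uniformly down to $s_i\approx r^{\alpha}$ (a uniform angular mesh would give area $\approx\rho r^{2\alpha-1}$ there). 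The paper instead discretizes by the fixed Cartesian grid $\nu\Z^{2}$: for the grid point $\nu z$ within $\sqrt2\nu$ of $X$ it uses the inclusion $\textrm{Cyl}(\nu z,\rho/2)\subset\textrm{Cyl}(X,\rho)$, so that the deterministic region is a sub-cylinder of width $\rho$ and radial length $\gtrsim r^{\alpha}$, of area $\geq\rho r^{\alpha}/4$, and there are $\mathcal{O}((r/\nu)^{2})$ grid points. The Cartesian choice is a touch cleaner: the width of $\textrm{Cyl}(\nu z,\rho/2)$ is automatically uniform over the grid, so the adaptive-mesh bookkeeping you flag as the ``main obstacle'' simply does not arise (and neither does the half-plane trimming, which in any case only costs a factor~$2$ in the area). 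Your polar variant has the modest advantage of being aligned with the geometry of $\textrm{Cyl}(X,\rho)$, which is defined in terms of rays through the origin. Both discretizations contribute $\mathcal{O}(r^{2})$ many cells and give $\mathcal{O}(r^{2})\,e^{-c\rho r^{\alpha}}=\mathcal{O}(r^{-n})$.
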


\begin{proof}
Assume there exists $X$ in $(\R_{+})^{2}\cap D(O,r)$ such that $|X|-|A(X)|>r^{\alpha}$. Then, we can find a real number $\nu>0$ small enough and $z\in\Z^{2}$ satisfying $|\nu z - X|\leq\sqrt{2}\nu$ and $\textrm{Cyl}(\nu z,\rho/2)\subset\textrm{Cyl}(X,\rho)$. We can then deduce on the one hand that $|z|\leq 2r/\nu$ for $r$ large enough, and on the other hand, the existence of a deterministic set $\textrm{Cyl}(\nu z,\rho/2)^{\ast}$ included in $\textrm{Cyl}(X,\rho)^{\ast}$ (i.e. avoiding the PPP $\NN$)and whose area is larger than $\rho r^{\alpha}/4$. Hence we get
$$
\PP( \Omega(r,r^{\alpha})^{c} ) \leq (4r/\nu)^{2} e^{-\rho r^{\alpha}/4}
$$
from which Lemma \ref{lem:Omega} follows.
\end{proof}

For the rest of the proof, we choose real numbers $0<\alpha<\frac{1}{2}$, $0<\eps''<\eps'<\eps<\frac{1}{2}$ and $0<\varphi<\pi/2$. Let $C_{\varphi,\eps}$ be the following set:
$$
C_{\varphi,\eps} = \{ X \in \R^{2} , \; 0<\mbox{arg}(X)<\varphi \; \mbox{ and } \; |X|>r^{\frac{1}{2}+\eps} \} ~.
$$
On the event $\Omega(r,r^{\alpha})$ and for $r$ large enough, the couple $(X,A(X))$ satisfies condition (\ref{conditionAast}) whenever $X\in C_{\varphi,\eps}$. Indeed,
$$
|A(X)| - X(2) \geq |X| - r^{\alpha} - |X|\sin\varphi \geq r^{\frac{1}{2}+\eps}(1 - \sin\varphi) - r^{\alpha} \geq \rho
$$
for $r$ large enough. Hence, on the event $\Omega(r,r^{\alpha})$, we can define by induction a sequence $Z_{0},Z_{1}\ldots$ of points of $\R^{2}$ as follows. The starting point $Z_{0}$ satisfies $|Z_{0}|=r$, $Z_{0}(1)\geq 0$ and $Z_{0}(2)=r^{\frac{1}{2}+\eps'}$. While $Z_{k}$ belongs to $C_{\varphi,\eps}$, we set $Z_{k+1}=A^{\ast}(Z_{k})$.

Let us consider the random times
$$
\tau_{1} := \min \{ k\in\mathbb{N} \, , \; |Z_{k}| \leq r^{\frac{1}{2}+\eps} \}
$$
and
$$
\tau_{2} := \min \{ k\in\mathbb{N} \, , \; |Z_{k}(2)-Z_{0}(2)|>r^{\frac{1}{2}+\eps''} \} ~.
$$
Thus, we choose $r$ large enough so that $k<\tau_{1}\vee\tau_{2}$ implies that $Z_{k}$ belongs to $C_{\varphi,\eps}$: see Figure \ref{fig:straight}. Henceforth, the path $\mathbf{Z}=(Z_{0},\ldots,Z_{\tau_{1}\vee\tau_{2}})$ is well defined on $\Omega(r,r^{\alpha})$.

\begin{figure}[!ht]
\begin{center}
\psfrag{o}{\small{$O$}}
\psfrag{p}{\small{$\varphi$}}
\psfrag{z}{\small{$Z_{0}$}}
\psfrag{r1}{\small{$r^{\frac{1}{2}+\eps}$}}
\psfrag{r2}{\small{$r$}}
\psfrag{r3}{\small{$r^{\frac{1}{2}+\eps'}$}}
\psfrag{r4}{\small{$r^{\frac{1}{2}+\eps''}$}}
\includegraphics[width=14cm,height=6.5cm]{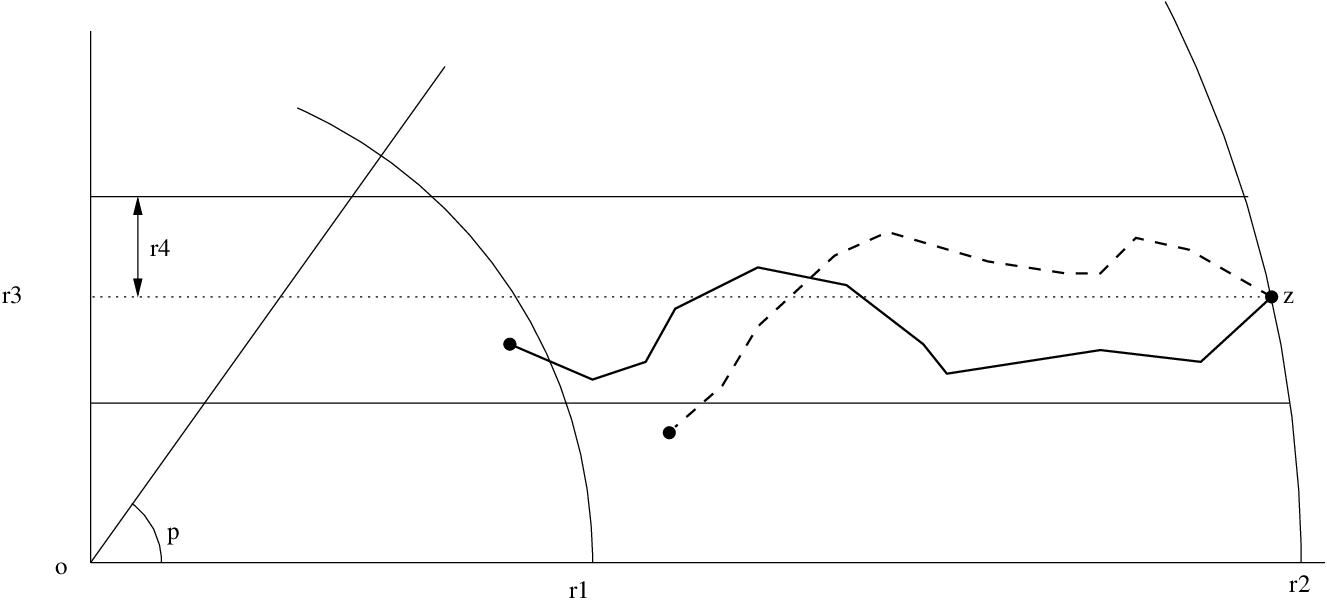}
\end{center}
\caption{\label{fig:straight} Here are two realizations of the path $\mathbf{Z}=(Z_{0},\ldots,Z_{\tau_{1}\vee\tau_{2}})$. The path in solid line satisfies $\tau_{1}<\tau_{2}$ whereas the path in dotted lines satisfies $\tau_{1}>\tau_{2}$.}
\end{figure}

\begin{lemma}
\label{lem:Zgras}
Assume the event $\Omega(r,r^{\alpha})$ satisfied. On the ring $D(O,r)\setminus D(O,|Z_{\tau_{1}\vee\tau_{2}}|)$, the path $\mathbf{Z}$ is above $\gamma^{+}_{r}$.
\end{lemma}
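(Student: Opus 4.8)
Throughout I would work on the event $\Omega(r,r^{\alpha})$ and for $r$ large, so that every point of $\mathbf{Z}$ and of $\gamma^{+}_{r}$ met in the ring has norm at least $r^{\frac12+\eps}$ while each increment of either curve has norm $\mathcal{O}(r^{\alpha})$; hence both curves stay inside a thin sector around the positive abscissa axis and ``being above'' (larger ordinate) coincides with ``having larger argument''. The plan is to prove the claim by induction on the spheres, pushing the comparison from the outer sphere $S(O,r)$ inward: for every radius $s$ in the ring, the point where $\mathbf{Z}$ meets $S(O,s)$ is (weakly) above the point where $\gamma^{+}_{r}$ meets it. The base case is immediate, since $Z_{0}$ has nonnegative abscissa and ordinate $r^{\frac12+\eps'}>0$, hence lies strictly above $X_{0}=(r,0)$ on $S(O,r)$.

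Two facts feed the induction. The first is Lemma \ref{lem:starancestor}: $A^{\ast}(X)$ lies on the same sphere $S(O,|A(X)|)$ as the genuine ancestor $A(X)$ and satisfies $A^{\ast}(X)(2)\geq A(X)(2)$, so replacing $A$ by $A^{\ast}$ can only move the target point upward along its sphere. The second is the non-crossing property of the RPT (Property \ref{propri:RPT}), which extends verbatim to branches issued from arbitrary points of $\R^{2}$ and to the PPP $\NN\cap(\R\!\times\!\R^{+})$: if two points $Y,Y'$ lie on a common sphere with $Y$ above $Y'$, then the RPT branch issued from $Y$ stays weakly above the one issued from $Y'$ -- with equality past their coalescence point -- because otherwise two of the open segments forming these branches would intersect; in particular $A(Y)$ is above $A(Y')$.

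For the inductive step, assume the order holds down to the sphere $S(O,|Z_{k}|)$ and let $W$ be the point at which $\gamma^{+}_{r}$ crosses that sphere, so $Z_{k}$ is above $W$. The RPT branch issued from $W$ is a tail of $\gamma^{+}_{r}$, so by the non-crossing fact the RPT branch issued from $Z_{k}$ stays above it; in particular its first vertex $A(Z_{k})$ lies above the crossing of $\gamma^{+}_{r}$ with $S(O,|A(Z_{k})|)=S(O,|Z_{k+1}|)$, and then Lemma \ref{lem:starancestor} places $Z_{k+1}=A^{\ast}(Z_{k})$ above $A(Z_{k})$, hence above $\gamma^{+}_{r}$, on that sphere. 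The remaining and most delicate point is to control the two curves at the radii strictly between $|Z_{k}|$ and $|Z_{k+1}|$, where $\mathbf{Z}$ is the single segment $[Z_{k},Z_{k+1}]$: one must rule out that this segment dips below $\gamma^{+}_{r}$ in between even though its two endpoints lie above it. I would argue by contradiction through the outermost radius at which the two curves meet -- a proper crossing there of an edge of $\gamma^{+}_{r}$ with $[Z_{k},Z_{k+1}]$ is excluded by the non-crossing property (after expressing $[Z_{k},Z_{k+1}]$ via the RPT edge $[Z_{k},A(Z_{k})]$ and the upward $\ast$-correction), while a vertex of one curve lying in the interior of an edge of the other is excluded by the strict inequalities of the construction. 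This bookkeeping -- reconciling the mismatch between the spheres visited by $\mathbf{Z}$ and those visited by $\gamma^{+}_{r}$, and the fact that the edges of $\mathbf{Z}$ need not be radially monotone -- is the main obstacle; everything else is a direct consequence of Property \ref{propri:RPT} and Lemma \ref{lem:starancestor}.
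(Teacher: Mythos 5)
Your proposal takes essentially the same approach as the paper: the paper's proof consists of stating exactly the two ingredients you use, namely Lemma \ref{lem:starancestor} (that $A^{\ast}(Z_k)$ and $A(Z_k)$ lie on the same sphere with $A^{\ast}(Z_k)(2)\geq A(Z_k)(2)$) and the non-crossing of the RPT branches built on $\NN\cap(\R\!\times\!\R^{+})$ issued from $A^{\ast}(Z_k)$ and $A(Z_k)$, with the same inductive propagation inward from $Z_0$. The bookkeeping you flag about the segment $[Z_k,Z_{k+1}]$ at intermediate radii is a genuine detail the paper leaves silent, and your resolution---comparing $[Z_k,A^{\ast}(Z_k)]$ to the RPT edge $[Z_k,A(Z_k)]$ via the upward $\ast$-displacement and then invoking non-crossing against $\gamma^{+}_{r}$---is the correct way to close it.
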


\begin{proof}
This result is based on the two following observations. Let $k<\tau_{1}\vee\tau_{2}$. First, the ancestors $A^{\ast}(Z_{k})$ and $A(Z_{k})$ are on the same circle with $A^{\ast}(Z_{k})(2)\geq A(Z_{k})(2)$. Second, the pathes of the RPT built on $\NN\cap(\R\!\times\!\R^{+})$ and starting at $A^{\ast}(Z_{k})$ and $A(Z_{k})$ do not cross.
\end{proof}

Let $n\in\mathbb{N}$ and assume that the maximal deviation $\Delta^{+}_{r}$ of $\gamma^{+}_{r}$ w.r.t. the horizontal axis $(OX_{0})$ is larger than $r^{\frac{1}{2}+\eps}$. Three cases can be distinguished:\\

Case 1: $\tau_{1}\leq\tau_{2}$. The path $\mathbf{Z}=(Z_{0},\ldots,Z_{\tau_{1}\vee\tau_{2}})$ enters in the disk $D(O,r^{\frac{1}{2}+\eps})$ before getting out the horizontal strip $\{X,|X(2)-Z_{0}(2)|\leq r^{\frac{1}{2}+\eps''}\}$. So, on the ring $D(O,r)\setminus D(O,r^{\frac{1}{2}+\eps})$, the path $\gamma^{+}_{r}$ is trapped between the axis $(OX_{0})$ and the path $\mathbf{Z}$ (Lemma \ref{lem:Zgras}). So, its maximal deviation $\Delta^{+}_{r}$ is smaller than $r^{\frac{1}{2}+\eps'}+r^{\frac{1}{2}+\eps''}$ which is smaller than $r^{\frac{1}{2}+\eps}$ for $r$ large enough. Moreover, once $\gamma^{+}_{r}$ is in $D(O,r^{\frac{1}{2}+\eps})$, it can no longer escape. Its maximal deviation inside this disk cannot exceed $r^{\frac{1}{2}+\eps}$. So Case 1 never happens.\\

Case 2: $\tau_{1}>\tau_{2}>\lfloor cr\rfloor$. In this case, the path $(Z_{0},\ldots,Z_{\lfloor cr\rfloor})$ is well defined but for $c$ large enough, it should had already entered in the disk $D(O,r^{\frac{1}{2}+\eps})$. Lemma \ref{lem:tau2}, proved at the end of the section, says that Case 2 occurs with small probability.

\begin{lemma}
\label{lem:tau2}
There exists a constant $c>0$ such that for any integer $n$ and $r$ large enough, the following statement holds:
$$
\PP( \tau_{1} > \tau_{2} > \lfloor cr\rfloor ,\, \Omega(r,r^{\alpha})) = \mathcal{O}(r^{-n}) ~.
$$
\end{lemma}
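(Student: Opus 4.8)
The goal is to bound the probability that the auxiliary path $\mathbf{Z}=(Z_0,\ldots,Z_{\tau_1\vee\tau_2})$ survives for at least $\lfloor cr\rfloor$ steps without leaving the horizontal strip $\{X,\,|X(2)-Z_0(2)|\le r^{1/2+\eps''}\}$, on the event $\Omega(r,r^\alpha)$. The idea is to control the radial progress of $\mathbf{Z}$ from below: each application of the $\ast$-ancestor map strictly decreases the euclidean norm, and on $\Omega(r,r^\alpha)$ this decrease is at most $r^\alpha$ but---crucially---cannot be arbitrarily small with high probability either. More precisely, by Lemma~\ref{lem:Omega} we may work on $\Omega(r,r^\alpha)$, and on the region $C_{\varphi,\eps}$ the increments $|Z_k|-|Z_{k+1}|$ are (essentially, conditionally on the past) distributed like the depth of an empty cylinder $\textrm{Cyl}(Z_k,\rho)^\ast$ along an almost-radial direction, hence stochastically bounded below by an exponential-type variable with a uniformly positive mean. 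Consequently, after $\lfloor cr\rfloor$ steps the norm has decreased by at least a quantity of order $cr$ with overwhelming probability, so for $c$ large enough $\mathbf{Z}$ must have entered $B(O,r^{1/2+\eps})$, i.e. $\tau_1\le\lfloor cr\rfloor$, contradicting $\tau_1>\tau_2>\lfloor cr\rfloor$.

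\textbf{Key steps.} First I would set up the filtration: reveal the points of $\NN\cap(\R\times\R^+)$ sequentially, so that after determining $Z_0,\ldots,Z_k$ we know the region explored so far, and the law of $A(Z_k)$ (hence of $A^\ast(Z_k)$, which has the same norm) depends only on the yet-unexplored part of $\textrm{Cyl}(Z_k,\rho)$. Second, I would establish the one-step lower bound: conditionally on the past, and on the event $\{Z_k\in C_{\varphi,\eps}\}\cap\Omega(r,r^\alpha)$, the increment $\Delta_k:=|Z_k|-|Z_{k+1}|$ dominates a random variable $E$ whose distribution does not depend on $k$ or $r$ and satisfies $\EE E=:\mu>0$ and an exponential tail. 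This is because, the direction of $Z_k$ being within $\varphi$ of horizontal, the cylinder $\textrm{Cyl}(Z_k,\rho)$ near $Z_k$ contains a rectangle of fixed width $\gtrsim\rho$, so the distance from $S(O,|Z_k|)$ to the first point of $\NN$ inside that rectangle is at most an exponential variable; the geometry also forces the norm decrease to be comparable to that distance up to a bounded factor (here one uses $|Z_k|\ge r^{1/2+\eps}\to\infty$ to linearize the sphere). Third, I would couple $\{\Delta_k\}$ with an i.i.d. sequence $(E_i)$ and apply a Chernoff/large-deviation bound to $\sum_{i=1}^{\lfloor cr\rfloor}E_i$: with $c>2/\mu$, say, one gets $\PP(\sum_{i=1}^{\lfloor cr\rfloor}E_i< r)\le e^{-\kappa r}$ for some $\kappa>0$, which is $\mathcal{O}(r^{-n})$ for every $n$. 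Since on $\Omega(r,r^\alpha)$ the event $\{\tau_1>\lfloor cr\rfloor\}$ implies $|Z_k|>r^{1/2+\eps}$ for all $k\le\lfloor cr\rfloor$ and hence $\sum_{i=1}^{\lfloor cr\rfloor}\Delta_i=|Z_0|-|Z_{\lfloor cr\rfloor}|\le r-r^{1/2+\eps}<r$, combining with the coupling gives the claim. Finally, I would absorb $\PP(\Omega(r,r^\alpha)^c)=\mathcal{O}(r^{-n})$ from Lemma~\ref{lem:Omega}.

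\textbf{Main obstacle.} The delicate point is the one-step lower bound on $\Delta_k$, specifically making the conditional stochastic domination uniform in $k$, in $r$, and in the explored past, and checking that the geometric distortion between "distance to the first $\NN$-point in the almost-horizontal cylinder" and "decrease of euclidean norm" is controlled by an absolute constant on $C_{\varphi,\eps}$ (this is where $\mbox{arg}(Z_k)<\varphi$, $|Z_k|>r^{1/2+\eps}$, and the constraint $\alpha<\tfrac12$ enter). One must also verify that the three cases in the definition of $A^\ast$ do not spoil the lower bound: the norm is preserved in all three cases, so $\Delta_k$ depends only on $A(Z_k)$, not on which case occurs, and the argument above applies verbatim. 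A minor technical nuisance is that the exploration of successive cylinders $\textrm{Cyl}(Z_k,\rho)$ may overlap; this is handled by noting that the overlap lies in the already-revealed region $\textrm{Cyl}(Z_{k-1},\rho)^\ast$, which is empty of $\NN$-points, so the conditional lower bound on the fresh part of $\textrm{Cyl}(Z_k,\rho)$ is only improved, not weakened.
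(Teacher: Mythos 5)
Your proposal is correct and follows essentially the same route as the paper: bound the total radial decrease $|Z_0|-|Z_{\lfloor cr\rfloor}|\le r$, dominate each increment $|Z_k|-|Z_{k+1}|=|Z_k|-|A(Z_k)|$ from below by an i.i.d.\ sequence with positive mean and good tails, and invoke a large-deviation estimate to conclude that $c$ large enough makes the event unlikely. The paper implements the stochastic domination a bit differently---by replacing $\textrm{Cyl}(Z_i,\rho)$ with a flat-sided set $\mathcal{C}_i$ so that the comparison variable $U_i=\max\{|Z_i-H_i|-\rho,0\}$ is exactly identically distributed regardless of $|Z_i|$, and then concluding via Gut's large-deviation theorem rather than a Chernoff bound---but these are implementation details of the same argument, and your handling of the cylinder overlaps and of the three cases in the definition of $A^\ast$ matches the paper's reasoning.
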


Case 3: $\tau_{1}>\tau_{2}$ and $\tau_{2}\leq\lfloor cr\rfloor$. Then, there exists an integer $m\in\{1,\ldots,\lfloor cr\rfloor\}$ such that the path $(Z_{0},\ldots,Z_{m})$ is well defined and satisfy $|Z_{k}(2)-Z_{0}(2)|\leq r^{\frac{1}{2}+\eps''}$ for any $k\in\{1,\ldots,m-1\}$, but
$$
|Z_{m}(2)-Z_{0}(2)| = \Big| \sum_{i=0}^{m-1} Z_{i+1}(2)-Z_{i}(2) \Big| > r^{\frac{1}{2}+\eps''} ~.
$$
Conditionally to $\tau_{1}>\tau_{2}$, $\tau_{2}=m$, $\Omega(r,r^{\alpha})$ and $|Z_{1}|,\ldots,|Z_{m}|$ we make two observations. First, by construction, the increments $(Z_{i+1}(2)-Z_{i}(2))_{0\leq i\leq m-1}$ are independent but not identically distributed (indeed, the law of $Z_{i+1}(2)-Z_{i}(2)$ depends on $|Z_{i}|$). Second, for any $i\in\{1,\ldots,m-1\}$, $Z_{i}(2)\geq r^{\frac{1}{2}+\eps'}-r^{\frac{1}{2}+\eps''}\geq r^{\alpha}$ for $r$ large enough. So, on $\Omega(r,r^{\alpha})$, the cylinder $\textrm{Cyl}(Z_{i},\rho)^{\ast}$ remains in the set $\{X, X(2)\geq 0\}$. By Lemma \ref{lem:starancestor}, this means that the increment $Z_{i+1}(2)-Z_{i}(2)$ is symmetrically distributed on $[-\rho;\rho]$: its conditional expectation is null. We can then apply Lemma \ref{lem:Youri} below to our context with $Y_{i+1}=Z_{i+1}(2)-Z_{i}(2)$ and $P$ given by the probability $\PP$ conditioned to $\tau_{1}>\tau_{2}$, $\tau_{2}=m$, $\Omega(r,r^{\alpha})$ and $|Z_{1}|,\ldots,|Z_{m}|$:
$$
\PP \left. \left( \begin{array}{c}
\forall k \in \{1,\ldots,m-1\} , \, |Z_{k}(2)-Z_{0}(2)| \leq r^{\frac{1}{2}+\eps''} \\
\mbox{ and } \; |Z_{m}(2)-Z_{0}(2)| > r^{\frac{1}{2}+\eps''}
\end{array} \; \right| \; \begin{array}{c}
\tau_{1}>\tau_{2}, \tau_{2}=m \\
\Omega(r,r^{\alpha}), |Z_{1}|,\ldots,|Z_{m}|
\end{array} \right) \hspace*{3cm}
$$
$$
\hspace*{2cm}= P \left( \forall k \in \{1,\ldots,m-1\} , \, \Big| \sum_{i=1}^{k} Y_{i} \Big| \leq r^{\frac{1}{2}+\eps''} \; \mbox{ and } \; \Big| \sum_{i=1}^{m} Y_{i} \Big| > r^{\frac{1}{2}+\eps''} \right)
$$
which is a $\mathcal{O}(r^{-n-1})$. Tacking the expectation, we obtain that the quantity $\PP(\tau_{1}>\tau_{2}, \tau_{2}=m, \Omega(r,r^{\alpha}))$ is a $\mathcal{O}(r^{-n-1})$, and then $\PP(\tau_{1}>\tau_{2}, \tau_{2}\leq\lfloor cr\rfloor, \Omega(r,r^{\alpha}))$ is a $\mathcal{O}(r^{-n})$.\\

In conclusion $\PP(\Delta_{r}^{+}\geq r^{\frac{1}{2}+\eps})$ is also a $\mathcal{O}(r^{-n})$ for all $\eps>0$. The same holds for $\PP(\Delta_{r}\geq r^{\frac{1}{2}+\eps})$, which achieves the proof of Proposition \ref{prop:RPTdeviation}.\\

The section ends with the proofs of Lemma \ref{lem:tau2}.

\begin{proof}
Let us assume that $\tau_{1}>\tau_{2}>\lfloor cr\rfloor$ for some positive constant $c$ which will be specified later, and $\Omega(r,r^{\alpha})$. Then, the path $(Z_{0},\ldots,Z_{\lfloor cr\rfloor})$ is well defined. Using $|Z_{i+1}|=|A^{\ast}(Z_{i})|=|A(Z_{i})|$, for any $0\leq i\leq\lfloor cr\rfloor-1$, we can write
\begin{equation}
\label{Z0<r}
|Z_{0}|-|Z_{\lfloor cr\rfloor}| = \sum_{i=0}^{\lfloor cr\rfloor-1} |Z_{i}|-|A(Z_{i})| \leq r ~.
\end{equation}
Now, we are going to define random variables $U_{i}$'s, for $0\leq i\leq\lfloor cr\rfloor-1$, which are identically distributed, and satisfy
\begin{equation}
\label{Ui}
\mbox{a.s. } \; |Z_{i}|-|A(Z_{i})| \geq U_{i} ~.
\end{equation}
To do it, let $\mathcal{C}_{i}$ be the set:
$$
\mathcal{C}_{i} = \Big( [O ; Z_{i}] \oplus D(O,\rho) \Big) \cap \{X\in\R^{2} , \langle X,Z_{i} \rangle < 1 \} ~.
$$
The right side of $\mathcal{C}_{i}$ is rectangular and so $\mathcal{C}_{i}$ contains the cylinder $\textrm{Cyl}(Z_{i},\rho)$. Thus, we denote by $H_{i}$ the element of $[O ; Z_{i}]$ with minimal norm such that $\mathcal{C}_{i}\setminus \left( [O ; H_{i}] \oplus D(O,\rho) \right)$ avoids the PPP $\NN\cap(\R\!\times\!\R^{+})$. In particular, $A(Z_{i})$ is on the circle $\CC(H_{i},\rho)$. Let us set $U_{i}=\max\{|Z_{i}-H_{i}|-\rho , 0\}$. By convexity, the disk $D(H_{i},\rho)$ is not include in $D(O,|A(Z_{i})|)$. This implies that $|Z_{i}|-|A(Z_{i})|$ is larger than $|Z_{i}-H_{i}|-\rho$. So (\ref{Ui}) is satisfied. Moreover, the distribution of $U_{i}$ does not depend on $Z_{i}$. This is due to the rectangular shape of $\mathcal{C}_{i}$ and, on the event $\Omega(r,r^{\alpha})$, $|Z_{i}|-|A(Z_{i})|$ is smaller than $r^{\alpha}$. In conclusion, the $U_{i}$'s are identically distributed.

Combining (\ref{Z0<r}) and (\ref{Ui}), it follows:
\begin{eqnarray*}
\PP( \tau_{1} > \tau_{2} > \lfloor cr\rfloor ,\, \Omega(r,r^{\alpha})) & \leq & \PP ( |Z_{0}|-|Z_{\lfloor cr\rfloor}| \leq r ,\, \Omega(r,r^{\alpha})) \\
& \leq & \PP \left( \sum_{i=0}^{\lfloor cr\rfloor-1} U_{i} \leq r ,\, \Omega(r,r^{\alpha}) \right) ~.
\end{eqnarray*}
By an abstract coupling, the $U_{i}$'s can be considered as independent random variables. Then, by Lemma \ref{lem:Gut}, $\PP(\sum_{i=0}^{\lfloor cr\rfloor-1} U_{i}\leq r , \Omega(r,r^{\alpha}))$ is a $\mathcal{O}(r^{-n})$ whenever $c>(\EE U_{1})^{-1}$. This leads to the searched result.

\end{proof}

\section{Technical lemmas}

This last section contains three technical results which are used many times in this paper. The first one is due to Talagrand (Lemma 11.1.1 of \cite{T}) and allows to bound from above the number of Poisson points occuring in a given set in terms of its area.

\begin{lemma}
\label{lem:Talagrand}
Let $\NN$ be a homogeneous PPP in $\R^{2}$ with intensity $1$. Then, for any bounded measurable set $\Lambda$ having a positive aera and any integer $n$,
$$
\PP \big( \NN (\Lambda) \ge n \big) \leq \exp \left( -n \ln\left( \frac{n}{e |\Lambda|} \right) \right) ~,
$$
where $|\Lambda|$ denotes the area of $\Lambda$.
\end{lemma}

The second result is a consequence of Theorem 3.1 of \cite{Gut}. It bounds the probability for a partial sum of i.i.d. random variables to be too small.

\begin{lemma}
\label{lem:Gut}
Let $(Y_{i})_{i\geq 1}$ be a sequence of positive i.i.d. random variables such that $\EE Y_{1}>0$ and for any integer $m$, $\EE Y_{1}^{m}<\infty$. Then, for any $m,r\geq 1$ and any constant $c>(\EE Y_{1})^{-1}$,
$$
\PP \left( \sum_{i=1}^{\lfloor cr\rfloor} Y_{i} \leq r \right) = \mathcal{O}(r^{-m}) ~.
$$
\end{lemma}

\begin{proof}
Since $c>(\EE Y_{1})^{-1}$, the inequality $\sum_{i=1}^{\lfloor cr\rfloor} Y_{i} \leq r$ implies that, for $r$ large enough, $\sum_{i=1}^{\lfloor cr\rfloor} X_{i} \leq -ar$ where $X_{i}=Y_{i}-\EE Y_{i}$ and $a>0$ is a constant. Henceforth,
$$
\PP \left( \sum_{i=1}^{\lfloor cr\rfloor} Y_{i} \leq r \right) \leq \PP \left( \left| \sum_{i=1}^{\lfloor cr\rfloor} X_{i} \right| \geq a r \right) = \mathcal{O}(r^{-m})
$$
by Theorem 3.1 of \cite{Gut} (precisely, equivalence between (3.1) and (3.2)) that we can use because $\EE X_{1}=0$ and for any $m$, $\EE |X_{1}|^{m}<\infty$.
\end{proof}

Lemma \ref{lem:Youri} says that with high probability, the maximal deviation of the first $t$ partial sums of a sequence $(Y_{i})_{i\geq 1}$ of independent bounded random variables (but not necessarily identically distributed) is smaller than $t^{\frac{1}{2}+\eps''}$.

\begin{lemma}
\label{lem:Youri}
Let $(Y_{i})_{i\geq 1}$ be a family of independent random variables defined on a probability space $(\Omega,\mathcal{F},P)$ satisfying for any $i\geq 1$, $E Y_{i}=0$ and $P(|Y_{i}|\leq\rho)=1$ (where $E$ denotes the expectation corresponding to $P$). Then, for all $0<\eps''<\frac{1}{2}$ and for all positive integers $n,t$:
$$
P \left( \max_{1\leq k\leq t} \Big| \sum_{i=1}^{k} Y_{i} \Big| \geq t^{\frac{1}{2}+\eps''} \right) = \mathcal{O}(t^{-n}) ~.
$$
Moreover, the $\mathcal{O}$ only depends on $\rho$, $n$ and $\eps''$.
\end{lemma}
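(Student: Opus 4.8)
The plan is to exploit the martingale structure of the partial sums $S_{k}=\sum_{i=1}^{k}Y_{i}$ together with an exponential Chernoff bound and Doob's maximal inequality; this will in fact produce a stretched-exponential bound, much stronger than the polynomial decay required by the statement.

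First I would observe that, since the $Y_{i}$ are independent with $E Y_{i}=0$, the sequence $(S_{k})_{k\geq 0}$ (with $S_{0}=0$) is a martingale for the natural filtration $\mathcal{F}_{k}=\sigma(Y_{1},\dots,Y_{k})$. Fix $\theta>0$. By Hoeffding's lemma applied to the bounded mean-zero variable $Y_{i}\in[-\rho;\rho]$ one has $E[e^{\theta Y_{i}}]\leq e^{\theta^{2}\rho^{2}/2}$; moreover $E[e^{\theta Y_{i}}]\geq e^{\theta E Y_{i}}=1$ by Jensen's inequality, so that $M_{k}=e^{\theta S_{k}}$ is a nonnegative submartingale. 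Doob's maximal inequality then gives, for every $\lambda>0$,
$$
P\left( \max_{1\leq k\leq t} S_{k} \geq \lambda \right) = P\left( \max_{0\leq k\leq t} M_{k} \geq e^{\theta\lambda} \right) \leq e^{-\theta\lambda}\, E[M_{t}] \leq e^{-\theta\lambda}\prod_{i=1}^{t} E[e^{\theta Y_{i}}] \leq e^{-\theta\lambda + t\theta^{2}\rho^{2}/2} ~.
$$
Minimising the right-hand side over $\theta$ (the minimum being attained at $\theta=\lambda/(t\rho^{2})$) yields the bound $\exp(-\lambda^{2}/(2t\rho^{2}))$. Running the same argument with the submartingale $e^{-\theta S_{k}}$ controls $\max_{1\leq k\leq t}(-S_{k})$, and a union bound produces
$$
P\left( \max_{1\leq k\leq t} |S_{k}| \geq \lambda \right) \leq 2\exp\left( -\frac{\lambda^{2}}{2t\rho^{2}} \right) ~.
$$

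It then remains to specialise $\lambda=t^{\frac{1}{2}+\eps''}$, for which the exponent becomes $-t^{2\eps''}/(2\rho^{2})$. Since $\eps''>0$, the map $t\mapsto 2\exp(-t^{2\eps''}/(2\rho^{2}))$ decays faster than any negative power of $t$; in particular, for each integer $n$ there is a constant depending only on $\rho$, $n$ and $\eps''$ bounding this quantity by that constant times $t^{-n}$, which is exactly the asserted $\mathcal{O}(t^{-n})$. I do not expect any genuine obstacle: the only points deserving a little care are that the exponentiated process is a submartingale rather than a martingale (so Doob's inequality is applied in the correct direction), that both $S_{k}$ and $-S_{k}$ must be handled to get a two-sided maximal bound, and that the final $\mathcal{O}$ must be checked to be uniform in $t$ and to depend only on $\rho$, $n$ and $\eps''$ --- all transparent from the displays above.
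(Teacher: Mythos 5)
Your argument is correct, and it takes a genuinely different route from the paper. The paper fixes $p=n/\eps''$, notes that $|S_{k}|^{p}$ is a nonnegative submartingale, applies Kolmogorov's (Doob's) maximal inequality at level $t^{(1/2+\eps'')p}$, and then bounds $E|S_{t}|^{p}$ through a Marcinkiewicz--Zygmund type moment inequality cited from Petrov (p.~59), namely $E|S_{t}|^{p}\leq C(p)E(\sum Y_{i}^{2})^{p/2}\leq C(p)\rho^{p}t^{p/2}$; putting the pieces together yields exactly $O(t^{-n})$. You instead exponentiate the martingale, invoke Hoeffding's lemma to control $E[e^{\theta Y_{i}}]$, apply Doob's maximal inequality to the submartingale $e^{\theta S_{k}}$, optimise in $\theta$, and obtain the two-sided Azuma--Hoeffding bound $2\exp(-\lambda^{2}/(2t\rho^{2}))$. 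Specialising $\lambda=t^{1/2+\eps''}$ gives the stretched-exponential rate $2\exp(-t^{2\eps''}/(2\rho^{2}))$, which is strictly stronger than the polynomial decay the lemma asks for, and whose conversion into an $O(t^{-n})$ constant visibly depends only on $\rho$, $n$ and $\eps''$. Your route avoids appealing to an external moment inequality and is in that sense more elementary and self-contained, at the cost of introducing the exponential machinery; the paper's route is a one-line application of a known reference and, because it works directly with a fixed power $p$, transparently delivers the exact polynomial rate stated, without any need to pass through a super-polynomial intermediate bound.
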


\begin{proof}
Let $\eps''\in (0;\frac{1}{2})$, $n,t\in\mathbb{N}^{\ast}$ and $p=n/\eps''$. By independence of the $Y_{i}$'s, $(\sum_{i=0}^{k}Y_{i})_{k\geq 1}$ is a martingale. Applying the convex function $x\mapsto|x|^{p}$ ($p>1$), we get a positive submartingale $(|\sum_{i=0}^{k}Y_{i}|^{p})_{k\geq 1}$. Then, the Kolmogorov' submartingale inequality gives:
$$
P \left( \max_{1\leq k\leq t} \Big| \sum_{i=1}^{k} Y_{i} \Big| \geq t^{\frac{1}{2}+\eps''} \right) = P \left( \max_{1\leq k\leq t} \Big| \sum_{i=1}^{k} Y_{i} \Big|^{p} \geq t^{\frac{p}{2}+p\eps''} \right) \leq t^{-\frac{p}{2}-n} E \Big| \sum_{i=1}^{t} Y_{i} \Big|^{p} ~.
$$
So it remains to prove that $E|\sum_{i=1}^{t} Y_{i}|^{p}=\mathcal{O}(t^{\frac{p}{2}})$. At this time, bounding the $|Y_{i}|$'s by $\rho$ does not lead to the searched result. It is better to use Petrov \cite{Petrov} p.59:
$$
E\Big|\sum_{i=1}^{t} Y_{i}\Big|^{p} \leq C(p) E\Big(\sum_{i=1}^{t} Y_{i}^{2}\Big)^{\frac{p}{2}} \leq C(p) \rho^{p} t^{\frac{p}{2}}
$$
where $C(p)$ is a positive constant only depending on $p$.
\end{proof}

\section*{Aknowledgements}

The author thank the members of the ``Groupe de travail G\'eom\'etrie Stochastique'' of Universit\'e Lille 1 for enriching discussions, especially Y. Davydov who has indicated the inequality of \cite{Petrov} p.59 (used in Lemma \ref{lem:Youri}). This work was supported in part by the Labex CEMPI  (ANR-11-LABX-0007-01) and by the CNRS GdR 3477 GeoSto.

\small{\bibliographystyle{plain}
\bibliography{Big-bibli}}

\end{document}